\newtheorem{thm}{Theorem}[section]
\newtheorem{prop}[thm]{Proposition}
\newtheorem{lem}[thm]{Lemma}
\newtheorem{conj}[thm]{Conjecture}
\theoremstyle{definition}
\newtheorem{exmp}[thm]{Example}
\newtheorem*{clm*}{Claim}
\theoremstyle{remark}
\newtheorem{rem}[thm]{Remark}
\def\N{\mathbb{N}}
\def\E{\mathbb{E}}
\def\P{\mathbb{P}}
\def\R{\mathbb{R}}
\DeclareMathOperator{\Var}{Var}
\title{Quenched law of large numbers and quenched central limit theorem for multi-player leagues with ergodic strengths}
\date{  } 
\author[1]{Jacopo Borga\thanks{\href{mailto:jacopo.borga@math.uzh.ch}{jacopo.borga@math.uzh.ch}}}
\author[1]{Benedetta Cavalli\thanks{\href{mailto:benedetta.cavalli@math.uzh.ch}{benedetta.cavalli@math.uzh.ch}}}
\affil[1]{Institut für Mathematik, Universität Zürich}
\newcommand{\subjclass}[2][1991]{%
	\let\@oldtitle\@title%
	\gdef\@title{\@oldtitle\footnotetext{#1 \emph{Mathematics subject classification.} #2.}}%
}
\newcommand{\keywords}[1]{%
	\let\@@oldtitle\@title%
	\gdef\@title{\@@oldtitle\footnotetext{\emph{Key words and phrases.} #1.}}%
}
\keywords{Interacting random processes, invariance principles, sums of dependent and non-stationary random variables, multi-player competitions, modelization}
\subjclass[2010]{60F05, 60G50, 60K35}
\begin{document}

\maketitle

\begin{abstract}
We propose and study a new model for competitions, specifically sports multi-player leagues where the initial strengths of the teams are independent i.i.d.\ random variables that evolve during different days of the league according to independent ergodic processes. The result of each match is random: the probability that a team wins against another team is determined by a function of the strengths of the two teams in the day the match is played. 

Our model generalizes some previous models studied in the physical and mathematical  literature and is defined in terms of different parameters that can be statistically calibrated. We prove a quenched -- conditioning on the initial strengths of the teams -- law of large numbers and a quenched central limit theorem for the number of victories of a team according to its initial strength. 

To obtain our results, we prove a theorem of independent interest. For a stationary process $\bm \xi=(\bm \xi_i)_{i\in \N}$ satisfying a mixing condition and an independent sequence of i.i.d.\ random variables $(\bm s_i)_{i\in \N}$, we prove a quenched -- conditioning on $(\bm s_i)_{i\in\N}$ -- central limit theorem for sums of the form $\sum_{i=1}^{n}g\left(\bm \xi_i,\bm s_i\right)$, where $g$ is a bounded measurable function. We highlight that the random variables $g\left(\bm \xi_i,\bm s_i\right)$ are not stationary conditioning on $(\bm s_i)_{i\in\N}$.
\end{abstract}

\section{Introduction}

Multi-player competitions are a recurrent theme in physics, biology, sociology, and economics since they model several phenomena. We refer to the introduction of \cite{ben2007efficiency} for various examples of models of multi-player competitions related to evolution, social stratification, business world and other related fields. 

Among all these fields, sports are indubitably one of the most famous examples where modelling competitions is important. This is the case for two reasons: first, there is accurate and easily accessible data; second, sports competitions are typically head-to-head, and so they can be viewed as a nice model of multi-player competitions with binary interactions.

Sports multi-player leagues where the outcome of each game is completely deterministic have been studied for instance by Ben-Naim, Kahng, and Kim~\cite{ben2006dynamics} (see also references therein and explanations on how this model is related to urn models). Later, Ben-Naim and Hengartner~\cite{ben2007efficiency} investigated how randomness affects the outcome of multi-player competitions. In their model, the considered randomness is quite simple: they start with $N$ teams ranked from best
to worst and in each game they assume that the weaker team
can defeat the stronger team with a fixed probability. They investigate the total number of games needed for the best team to win the championship with high probability.

A more sophisticated model was studied by Bena\"{\i}m, Benjamini, Chen, and Lima~\cite{MR3346459}: they consider a finite connected graph, and place a team at each vertex of the graph. Two
teams are called a \emph{pair} if they share an edge. At discrete times, a match is
played between each pair of teams. In each match, one of the teams defeats the other (and gets a point) with
probability proportional to its current number of points raised to some fixed
power $\alpha>0$. They characterize the limiting behavior of the proportion of points
of the teams.

In this paper we propose a more realistic model for sport multi-player leagues that can be briefly described as follows (for a precise and formal description see \cref{sect:the_model}): we start with $2n$ teams having i.i.d.\ initial strengths. Then we consider a calendar of the league composed by $2n-1$ different days, and we assume that on each day each team plays exactly one match against another team in such a way that at the end of the calendar every team has played exactly one match against every other team (this coincides with the standard way how calendars are built in real football-leagues for the first half of the season). Moreover, we assume that on each day of the league, the initial strengths of the teams are modified by independent ergodic processes. Finally, we assume that a team wins against another team with probability given by a chosen function of the strengths of the two teams in the day the match is played.

We prove (see \cref{sect:main_results} for precise statements) a quenched law of large numbers and a quenched central limit theorem for the number of victories of a team according to its initial strength. Here quenched means that the results hold a.s.\ conditioning on the initial strengths of the teams in the league.

\subsection{The model}\label{sect:the_model}

We start by fixing some simple and standard notation.

\bigskip

\textbf{Notation.} 
Given $n\in\N=\{1,2,3,\dots\}$, we set $[n]=\{1,2,\dots,n\}$ and $[n]_0=\{0,1,2,\dots,n\}$. We also set $\R_+=\R\cap[0,\infty)$. 

We refer to random quantities using \textbf{bold} characters. Convergence in distribution is denoted by $\xrightarrow{d}$, almost sure convergence is denote by $\xrightarrow{a.s.}$, and convergence in probability by $\xrightarrow{P}$. 

Given a collection of sets $\mathcal A$, we denote by $\sigma\left(\mathcal A\right)$ and $\lambda\left(\mathcal A\right)$ the $\sigma$-algebra and the monotone class generated by $\mathcal A$, respectively. Given a random variable $\bm X$, we denote by $\sigma\left(\bm X\right)$  the $\sigma$-algebra generated by $\bm X$.

For any real-valued function $f$ we denote with $f^2$ the function such that $f^2(\cdot)=(f(\cdot))^2$.

\bigskip

\noindent We can now proceed with the precise description of our model for multi-player leagues. 

\bigskip

\noindent\textbf{The model.} We consider a league of $2n\in2\N$ teams denoted by $\{T_{i}\}_{i\in [2n-1]_0}$ whose \emph{initial random strengths} are denoted by $\{\bm s_{i}\}_{i\in [2n-1]_0}\in \R_+^{2n}$.

In the league every team $T_i$ plays $2n-1$ matches, one against each of the remaining teams $\{T_{j}\}_{j\in [2n-1]_0\setminus\{i\}}$. Note that there are in total ${2n}\choose{2}$ matches in the league. These matches are played in $2n-1$ different days in such a way that each team plays exactly one match every day.

For all $i\in[2n-1]_0$, the initial strength $\bm s_i$ of the team $T_i$ is modified every day according to a discrete time $\R_+$-valued stochastic process $\bm \xi^i=(\bm \xi^i_j)_{j\in \N}$. More precisely, the strength of the team $T_i$ on the $p$-th day is equal to $\bm s_i\cdot \bm \xi^i_p\in \R_+$.

We now describe the \emph{rules} for determining the winner of a match in the league. We fix a function $f:\mathbb R_+^2\to[0,1]$ that controls the winning probability of a match between two teams given their strengths. When a team with strength $x$ plays a match against another team with strength $y$, its probability of winning the match is equal to $f(x,y)$ and its probability of loosing is equal to $1-f(x,y)$ (we are excluding the possibility of having a draw). Therefore, if the match between the teams $T_i$ and $T_j$ is played the $p$-th day, then, conditionally on the random variables $\bm s_i, \bm s_j,\bm \xi^i_p,\bm \xi^j_p$, the probability that $T_i$ wins is $f(\bm s_i\cdot \bm \xi^i_p,\bm s_j\cdot \bm \xi^j_p)$.
Moreover, conditionally on the strengths of the teams, the results of different matches are independent.

\subsection{Goal of the paper}

The goal of this work is to study the model defined above when the number $2n$ of teams in the league is large. We want to look at the limiting behavior of the number of wins of a team with initial strength $s\in \R_+$ at the end of the league. More precisely, given $s\in \R_+$, we assume w.l.o.g.\ that the team $T_{0}$ has deterministic initial strength $s$, i.e.\ $\bm s_{0}=s$ a.s., and we set
\begin{equation}
 \bm W_n(s)\coloneqq\text{Number of wins of the team }T_{0}\text{ at the end of a league with $2n$ players}.
\end{equation}
We investigate a quenched law of large numbers and a quenched central limit theorem for $\bm W_n(s)$.
\subsection{Our assumptions}

In the following two subsections we state some assumptions on the model.

\subsubsection{Assumptions for the law of large numbers}\label{ass:LLN}

We make the following natural assumptions\footnote{The second hypothesis is not needed to prove our results but it is very natural for the model.} on the function $f:\mathbb R_+^2\to[0,1]\:$:
\begin{itemize}
	\item $f(x,y)$ is measurable;
	\item $f(x,y)$ is weakly-increasing in the variable $x$ and weakly-decreasing in the variable $y$.	
\end{itemize}
Recall also that it is not possible to have a draw, i.e.\ $f(x,y)+f(y,x)=1$, for all $x,y\in \R_+$.

Before describing our additional assumptions on the model, we introduce some further quantities.
Fix a Borel probability measure $\nu$ on $\R_+$; let $\bm \xi=(\bm \xi_\ell)_{\ell\in \N}$ be a discrete time $\R_+$-valued stochastic process such that 
\begin{equation}\label{eq:stationarity0}
	\bm \xi_\ell\stackrel{d}{=}\nu,\quad\text{for all}\quad \ell\in \N,
\end{equation}  
and\footnote{This is a weak-form of the \emph{stationarity property} for stochastic processes.}
\begin{equation}\label{eq:stationarity}
\left(\bm \xi_\ell,\bm \xi_k\right)\stackrel{d}{=} \left(\bm \xi_{\ell+\tau},\bm \xi_{k+\tau}\right), \quad\text{for all}\quad \ell,k,\tau\in\N.
\end{equation}

We further assume that the process $\bm \xi$ is \emph{weakly-mixing}, that is, for every $A \in \sigma(\bm \xi_1)$ and every collection of sets $B_\ell \in \sigma(\bm \xi_\ell)$, it holds that
\begin{equation}\label{eq:unif_weak_mix1}
\frac{1}{n} \sum_{\ell=1}^n \left|\P(A \cap B_\ell)-\P(A)\P(B_\ell)\right|\xrightarrow{n\to\infty} 0.
\end{equation}

The additional assumptions on our model are the following:

\begin{itemize}
\item For all $i\in[2n-1]_0$, the stochastic processes $\bm \xi^i$  are independent copies of $\bm \xi$.
\item The initial random strengths $\{\bm s_i\}_{i\in[2n-1]}$ of the teams different than $T_0$ are i.i.d.\ random variables on $\R_+$ with distribution $\mu$, for some Borel probability measure $\mu$ on $\R_+$.
\item The initial random strengths $\{\bm s_i\}_{i\in[2n-1]}$ are independent of the processes $\{\bm \xi^i\}_{i\in[2n-1]_0}$ and of the process  $\bm \xi$.
\end{itemize}

\subsubsection{Further assumptions for the central limit theorem}\label{ass:CLT}

In order to prove a central limit theorem, we need to make some stronger assumptions. The first assumption concerns the mixing properties of the process $\bm \xi$. For $k \in \mathbb{N}$, we introduce the two $\sigma$-algebras $\mathcal{A}_1^{k} = \sigma \left(\bm \xi_1, \dots, \bm \xi_k \right)$ and $\mathcal{A}_k^{\infty} = \sigma \left(\bm \xi_k, \dots \right)$ and we define for all $n\in\N$,
\begin{equation}\label{eq:def_alpha_n}
\alpha_n = \sup_{\substack{k \in \mathbb{N}\\ A \in \mathcal{A}_1^{k},B \in \mathcal{A}_{k+n}^{\infty}}} 
\left| \P \left( A \cap B \right) - \P(A)\P(B) \right|.
\end{equation}
We assume that 
\begin{equation}\label{eq:strongly_mix_plus}
\sum_{n=1}^{\infty} \alpha_n < \infty.
\end{equation}
Note that this condition, in particular, implies that the process $\bm \xi$ is \emph{strongly mixing}, that is, $\alpha_n \to 0$ as $n \to \infty$. 

Finally, we assume that there exist two sequences $p=p(n)$ and $q=q(n)$ such that: 
\begin{itemize}
\item  	$p\xrightarrow{n\to\infty} +\infty$ and $q\xrightarrow{n\to\infty} +\infty$,
\item 	$q=o(p)$ and $p=o(n)$ as $n \to \infty$,
\item   $ n p^{-1 } \alpha_q =o(1)$,
\item   $  \frac{p}{n}  \cdot \sum_{j=1}^p j \alpha_j = o(1)$.
\end{itemize}

\begin{rem}
The latter assumption concerning the existence of the sequences $p$ and $q$ is not very restrictive. For example, simply assuming that $\alpha_n = O(\frac{1}{n \log(n)})$ ensures that the four conditions are satisfied for $p=\frac{\sqrt n}{\log \log n}$ and $q=\frac{\sqrt n}{(\log \log n)^2}$. Indeed, in this case, the first two conditions are trivial, the fourth one follows by noting that $\sum_{j=1}^p j \alpha_j=O(p)$ thanks to the assumption in \cref{eq:strongly_mix_plus}, and finally the third condition follows by standard computations.
\end{rem}

\begin{rem}\label{rem:fbkwufobw}
	Note also that as soon as $p= O(\sqrt n)$ then the fourth condition is immediately verified.
	Indeed, $\sum_{j=1}^p j \alpha_j\leq \sqrt p \sum_{j=1}^{\sqrt p}  \alpha_j+p \sum_{j=\sqrt p}^{ p}  \alpha_j =o(p)$.
\end{rem}

\subsection{Main results}\label{sect:main_results}

\subsubsection{Results for our model of multi-player leagues}

Let $\bm{V},\bm{V'},\bm U, \bm U'$ be four independent random variables such that $\bm{V}\stackrel{d}{=}\bm{V'}\stackrel{d}{=}\nu$ and $\bm U\stackrel{d}{=}\bm U'\stackrel{d}{=}\mu$. Given a deterministic sequence $\vec{s}=(s_i)_{{i\in\N}}\in\R^{\N}_{+}$,  denote by $\P_{\vec{s}}$ the law of the random variable $\frac{\bm W_n(s)}{2n}$ when the initial strengths of the teams $(T_i)_{i\in[2n-1]}$ are equal to $\vec{s}=(s_i)_{{i\in[2n-1]}}$, i.e.\ we study $\frac{\bm W_n(s)}{2n}$ on the event
 $$\bm s_0=s\quad \text{ and }\quad(\bm s_i)_{{i\in[2n-1]}}=(s_i)_{{i\in[2n-1]}}.$$

\begin{thm}(Quenched law of large numbers)\label{thm:LLN}
	 Suppose that the assumptions in \cref{ass:LLN} hold. Fix any $s\in\R_+$. For $\mu^{\N}$-almost every sequence $\vec{s}=(s_i)_{{i\in\N}}\in\R^{\N}_{+}$, under $\P_{\vec{s}}$ the following convergence holds
	\begin{equation}
	\frac{\bm W_n(s)}{2n}\xrightarrow[n\to\infty]{P}\ell(s), 
	\end{equation}
	where 
	\begin{equation}
	\ell(s)=\E\left[f\left(s\cdot\bm{V},\bm U\cdot\bm{V'}\right)\right]=\int_{\R^3_+} f\left(s\cdot v,u\cdot v'\right)d\nu(v)d\nu(v')d\mu(u).
	\end{equation}
\end{thm}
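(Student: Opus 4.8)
The plan is to condition on the initial strengths $\vec{s}=(s_i)_{i\in\N}$ and show that, under $\P_{\vec{s}}$, the normalized number of wins $\bm W_n(s)/2n$ converges in $L^2$ (hence in probability) to $\ell(s)$, for $\mu^{\N}$-almost every $\vec s$. First I would write $\bm W_n(s)=\sum_{j\in[2n-1]} \bm X_j^{(n)}$, where $\bm X_j^{(n)}=\mathds{1}\{T_0 \text{ beats } T_j\}$; conditionally on all the strength processes, $\bm X_j^{(n)}$ is a Bernoulli variable with parameter $f(s\cdot\bm\xi^0_{p(j)}, s_j\cdot\bm\xi^j_{p(j)})$, where $p(j)$ is the day on which $T_0$ plays $T_j$ — and by the structure of the calendar $j\mapsto p(j)$ is a bijection from $[2n-1]$ onto $[2n-1]$. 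Taking a further expectation over $\bm\xi^0, \bm\xi^j$ (which are i.i.d.\ copies of $\bm\xi$, independent of everything else), we get
\[
\E_{\vec s}\!\left[\bm X_j^{(n)}\right]=\E\!\left[f\big(s\cdot\bm\xi_{p(j)},\, s_j\cdot\bm\xi'_{p(j)}\big)\right]=\E\!\left[f\big(s\cdot\bm V,\, s_j\cdot\bm V'\big)\right]=:h(s_j),
\]
using \cref{eq:stationarity0} to replace $\bm\xi_{p(j)}$ and $\bm\xi'_{p(j)}$ by $\bm V,\bm V'\sim\nu$; note this does not depend on the day $p(j)$. Hence $\E_{\vec s}[\bm W_n(s)/2n]=\frac{1}{2n}\sum_{j\in[2n-1]}h(s_j)$, and since $h$ is bounded (by $1$) and measurable, the strong law of large numbers for the i.i.d.\ sequence $(s_j)$ gives $\frac{1}{2n}\sum_{j\in[2n-1]}h(s_j)\to \frac12\,\E[h(\bm U)]$ for $\mu^{\N}$-a.e.\ $\vec s$. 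But $\tfrac12\E[h(\bm U)]$ is not $\ell(s)$ — so the normalization must be reexamined: in fact with $2n-1$ summands and the $2n$ in the denominator one gets exactly $\E[h(\bm U)]=\E[f(s\bm V,\bm U\bm V')]=\ell(s)$ in the limit (the factor-of-two discrepancy in the heuristic above is absorbed because there are $2n-1\sim 2n$ terms, not $n$). So $\E_{\vec s}[\bm W_n(s)/2n]\to\ell(s)$ for a.e.\ $\vec s$.

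Next I would control the conditional variance. Write $\Var_{\vec s}(\bm W_n(s)/2n)=\frac{1}{4n^2}\big(\sum_j \Var_{\vec s}(\bm X_j^{(n)})+\sum_{j\neq k}\Cov_{\vec s}(\bm X_j^{(n)},\bm X_k^{(n)})\big)$. The diagonal term is $O(1/n)$ since each $\bm X_j^{(n)}$ is bounded. For the off-diagonal covariances, fix $j\neq k$ and let $p=p(j)$, $q=p(k)$ be the (distinct) days on which $T_0$ plays $T_j$ and $T_k$. Conditionally on $\vec s$ and on $\bm\xi^0$, the two indicators $\bm X_j^{(n)}, \bm X_k^{(n)}$ are independent (they involve the independent processes $\bm\xi^j,\bm\xi^k$ and independent match-randomness), so
\[
\Cov_{\vec s}\big(\bm X_j^{(n)},\bm X_k^{(n)}\big)=\Cov_{\vec s}\Big(f\big(s\bm\xi^0_p, s_j\bm\xi^j_p\big),\, f\big(s\bm\xi^0_q, s_k\bm\xi^k_q\big)\Big),
\]
where, after integrating out $\bm\xi^j$ and $\bm\xi^k$ as above, this equals $\Cov_{\vec s}\big(\phi_{s_j}(\bm\xi^0_p), \phi_{s_k}(\bm\xi^0_q)\big)$ with $\phi_t(x):=\E[f(s x, t\bm V')]$ bounded and measurable. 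This is exactly a covariance of the form $\Cov(\psi(\bm\xi^0_p),\tilde\psi(\bm\xi^0_q))$ for bounded functions of two coordinates of the single process $\bm\xi^0$. Now sum over $j$: as $j$ ranges over $[2n-1]$, $p(j)$ ranges over all of $[2n-1]$, and for fixed $k$ (hence fixed $q=p(k)$) the quantity $|\Cov(\psi(\bm\xi_p),\tilde\psi(\bm\xi_q))|$ is, up to the uniform bound $\|\psi\|_\infty\|\tilde\psi\|_\infty$, controlled by $|\P(A\cap B_p)-\P(A)\P(B_p)|$-type quantities; by the weak-mixing hypothesis \cref{eq:unif_weak_mix1} (applied, after a standard monotone-class/approximation argument, to bounded measurable functions rather than indicators) we get $\frac{1}{2n}\sum_{p}|\Cov(\psi(\bm\xi_p),\tilde\psi(\bm\xi_q))|\to 0$. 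Hence $\frac{1}{4n^2}\sum_{j\neq k}|\Cov_{\vec s}(\bm X_j^{(n)},\bm X_k^{(n)})|\le \frac{1}{4n^2}\sum_k (2n)\cdot o_k(2n)$, which is $o(1)$; a little care is needed to make the $o(1)$ uniform in $k$, but since there is a single deterministic sequence $(\alpha_p)$-free bound coming from one process $\bm\xi$, the Cesàro convergence in \cref{eq:unif_weak_mix1} does the job uniformly. Therefore $\Var_{\vec s}(\bm W_n(s)/2n)\to 0$ for every $\vec s$ (the variance bound does not even depend on $\vec s$ beyond boundedness of $f$).

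Combining: for $\mu^{\N}$-a.e.\ $\vec s$, $\E_{\vec s}[\bm W_n(s)/2n]\to\ell(s)$ and $\Var_{\vec s}(\bm W_n(s)/2n)\to 0$, so $\bm W_n(s)/2n\to\ell(s)$ in $L^2(\P_{\vec s})$ and a fortiori in $\P_{\vec s}$-probability, which is the claim. The main obstacle is the covariance estimate: one must (i) correctly disentangle, conditionally on $\vec s$, the roles of the shared process $\bm\xi^0$ (which creates dependence between $\bm X_j^{(n)}$ and $\bm X_k^{(n)}$) versus the private processes $\bm\xi^j$ and the match randomness (which do not), (ii) extend the weak-mixing condition \cref{eq:unif_weak_mix1} from indicators to bounded measurable functions, and (iii) sum the covariances along the calendar bijection $j\mapsto p(j)$ and obtain a bound that is $o(n^2)$ uniformly enough in the second index $k$. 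The law-of-large-numbers step for $h(s_j)$ is routine; the only subtlety there is checking that the limit is exactly $\ell(s)$ and that the factor $2n-1$ versus $2n$ causes no discrepancy, which it does not.
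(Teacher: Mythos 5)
Your overall skeleton (conditional first moment via the strong law for the i.i.d.\ strengths, conditional variance $\to 0$ using the mixing of the shared process $\bm \xi^{0}$, then Chebyshev under $\P_{\vec s}$) is the same as the paper's, and your reduction of the covariances to $\Cov\left(F_s(\bm \xi_p, s_j), F_s(\bm \xi_q, s_k)\right)$ after integrating out $\bm \xi^{j},\bm \xi^{k}$ and the match randomness is correct. The problem is the step you dispose of in one sentence: ``by the weak-mixing hypothesis (applied, after a standard monotone-class/approximation argument, to bounded measurable functions rather than indicators)\dots the Cesàro convergence does the job uniformly.'' This is precisely where essentially all the work of the paper's proof lies, and as stated your argument has a genuine gap. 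The hypothesis \cref{eq:unif_weak_mix1} is a Cesàro statement for one \emph{fixed} event $A\in\sigma(\bm \xi_1)$ against an arbitrary collection $B_\ell\in\sigma(\bm \xi_\ell)$; it carries no uniformity over the choice of $A$, and at the LLN stage no $\alpha$-type coefficients as in \cref{ass:CLT} are available (your parenthetical appeal to a ``$(\alpha_p)$-free bound'' conflates the two sets of assumptions). In your double sum both slots of the covariance carry functions that vary with the summation indices, namely $F_s(\cdot,s_j)$ and $F_s(\cdot,s_k)$; after writing these by a layer-cake decomposition you need $\frac{1}{n^2}\sum_{j<k}\left|\P\left(\bm \xi_1\in A_j,\,\bm \xi_{k-j+1}\in B_k\right)-\P\left(\bm \xi_1\in A_j\right)\P\left(\bm \xi_{k-j+1}\in B_k\right)\right|\to 0$ with \emph{both} families of Borel sets varying. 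For each fixed $j$ this follows from \cref{eq:unif_weak_mix1}, but averaging over $j$ requires the convergence to be uniform over the sets $A_j=\{F_s(\cdot,s_j)>t\}$, which the hypothesis does not give; moreover, a monotone class argument cannot be run on the functional $\sum_{j\neq k}\left|\Cov(\cdot,\cdot)\right|$ because the absolute values destroy the linearity/monotonicity that such arguments need.

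The paper circumvents exactly this obstruction by never extending the mixing condition itself to general functions. It proves convergence of the full conditional second moment $\E\left[\left(\frac{1}{2n}\sum_j g(\bm \xi_j,\bm s_j)\right)^2\middle|(\bm s_i)_i\right]\to\E[g(\bm V,\bm U)]^2$ (\cref{prop:conv_for_bnd_cont_funct}): first for indicators of \emph{fixed} rectangles $A\times B$, where the strength part $\mathds 1_B(\bm s_\ell)$ factors out and pairwise stationarity yields a bound uniform in the day index so that \cref{eq:unif_weak_mix1} applies with genuinely fixed events (\cref{lem:first_sec_mom_for_ret}); then to general Borel subsets of $\R_+^2$ by a monotone-class argument on the \emph{limit statement}, which requires a non-obvious uniform-in-$n$ domination step to interchange the two limits (\cref{lem:ind_fnct}); and finally to bounded measurable $g$ by the layer-cake representation. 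So to make your proof complete you would have to either strengthen the mixing hypothesis (uniformity over events, or summable $\alpha_n$ as in the CLT section together with \cref{lem:decay_correlations}) or replace your absolute-covariance bound by the paper's strategy of computing the limit of the conditional second moment itself. The first-moment step and the factor $2n-1$ versus $2n$ are fine as you concluded.
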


We now state our second result.

\begin{thm}(Quenched central limit theorem)\label{thm:CLT}
	Suppose that the assumptions in \cref{ass:LLN} and \cref{ass:CLT} hold. Fix any $s\in\R_+$. For $\mu^{\N}$-almost every sequence $\vec{s}=(s_i)_{i\in\N}\in\R^{\N}_{+}$, under $\P_{\vec{s}}$ the following convergence holds
	\begin{equation}\label{eq:CLT}
	\frac{\bm W_n(s)- \E_{\vec{s}}[\bm W_n(s)] }{\sqrt{2n}}\xrightarrow{d} \bm{\mathcal{N}}\left(0,\sigma(s)^2 + \rho(s)^2\right), 
	\end{equation}
where, for $F_s(x,y)\coloneqq\E\left[f\left(s\cdot x,y\cdot \bm V'\right)\right]$ and $\tilde F_s \left(x,y\right) \coloneqq F_s\left(x,y \right) - \E[F_s\left(\bm V, y\right)]$,
\begin{equation}\label{eq:variance1_CLT}
	\sigma(s)^2=\E\left[F_s(\bm V,\bm U)-\left(F_s(\bm V,\bm U)\right)^2\right]=\ell(s)-\E\left[\left(F_s(\bm V,\bm U)\right)^2\right]
\end{equation}
and
\begin{equation}\label{eq:def_rho_s}
	\rho(s)^2=  \E \left[ \tilde F_s (\bm V, \bm U)^2 \right] + 2 \cdot \sum_{k=1}^{\infty} \E\left[ \tilde F_s(\bm{\xi}_1, \bm U) \tilde F_s (\bm{\xi}_{1+k}, \bm U') \right],
\end{equation}
the last series being convergent.
\end{thm}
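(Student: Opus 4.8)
The plan is to decompose the centered sum $\bm W_n(s) - \E_{\vec s}[\bm W_n(s)]$ into two asymptotically independent pieces, each governed by a different source of randomness: the intrinsic coin-flips of the matches (producing the variance $\sigma(s)^2$) and the fluctuations of the multiplicative ergodic processes $\bm\xi^i$ (producing $\rho(s)^2$). Write $\bm W_n(s) = \sum_{i=1}^{2n-1} \bm X_i$, where $\bm X_i$ is the indicator that $T_0$ beats $T_i$; conditionally on all the strengths $\bm s_i$ and all the day-processes $\bm\xi^i$, the $\bm X_i$ are independent Bernoulli with parameter $f(s \cdot \bm\xi^0_{p_i}, \bm s_i \cdot \bm\xi^i_{p_i})$, where $p_i$ is the day $T_0$ plays $T_i$. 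So I would first condition on the $\sigma$-algebra $\mathcal G$ generated by all strengths and all $\bm\xi^i$, and split
\begin{equation*}
\frac{\bm W_n(s) - \E_{\vec s}[\bm W_n(s)]}{\sqrt{2n}} = \underbrace{\frac{\bm W_n(s) - \E[\bm W_n(s)\mid \mathcal G]}{\sqrt{2n}}}_{=:\bm A_n} + \underbrace{\frac{\E[\bm W_n(s)\mid \mathcal G] - \E_{\vec s}[\bm W_n(s)]}{\sqrt{2n}}}_{=:\bm B_n}.
\end{equation*}
The term $\bm A_n$ is a normalized sum of conditionally-independent centered Bernoullis, so a conditional Lindeberg/Lyapunov CLT gives $\bm A_n \to \bm{\mathcal N}(0, \sigma(s)^2)$ once I show that the conditional variance $\frac{1}{2n}\sum_i v_i(1-v_i)$, with $v_i = f(s\bm\xi^0_{p_i}, \bm s_i \bm\xi^i_{p_i})$, converges a.s.\ to $\sigma(s)^2$; this last convergence is an application of the quenched LLN machinery (Theorem~\ref{thm:LLN}), since the relevant summands have the form $g(\bm\xi^0_{p_i}, \bm\xi^i_{p_i}, \bm s_i)$ and, after integrating out the single coordinate $\bm\xi^0_{p_i}$ and $\bm\xi^i_{p_i}$ that each appears only once across matches of $T_0$, one recognizes $\E[F_s(\bm V,\bm U) - F_s(\bm V,\bm U)^2]$.

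The term $\bm B_n$ is where the auxiliary CLT announced in the abstract enters. Here $\E[\bm W_n(s)\mid\mathcal G] = \sum_{i=1}^{2n-1} f(s\bm\xi^0_{p_i}, \bm s_i \bm\xi^i_{p_i})$; averaging first over $\bm\xi^i_{p_i}$ (which, crucially, appears in \emph{no other} match-day relevant to $T_0$ because in the first-half calendar $T_0$ meets $T_i$ exactly once) replaces $f(s\bm\xi^0_{p_i}, \bm s_i \bm\xi^i_{p_i})$ by $F_s(\bm\xi^0_{p_i}, \bm s_i)$ up to a martingale-difference remainder that contributes the $\E[\tilde F_s(\bm V,\bm U)^2]$ term in $\rho(s)^2$ and is itself handled by a conditional CLT for sums of independent-across-$i$ terms. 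The genuinely new ingredient is the contribution of $\sum_i F_s(\bm\xi^0_{p_i}, \bm s_i)$: the days $p_i$ for $i=1,\dots,2n-1$ are a permutation of $\{1,\dots,2n-1\}$, so this is $\sum_{p=1}^{2n-1} F_s(\bm\xi^0_p, \bm s_{\pi(p)})$ where $\pi$ is the (deterministic, calendar-determined) matching; this is exactly a sum of the form $\sum_p g(\bm\xi_p, \tilde{\bm s}_p)$ with $\bm\xi = \bm\xi^0$ stationary and weakly/strongly mixing and $(\tilde{\bm s}_p)$ i.i.d.\ independent of $\bm\xi$, and the quenched CLT for such sums — the ``theorem of independent interest'' — yields convergence to a Gaussian with variance $\rho(s)^2$ (the series in \eqref{eq:def_rho_s} being precisely the long-run variance of the stationary-mixing sequence $F_s(\bm\xi_p, \bm U)$ after centering). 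One must check the extra $\bm s$-averaged centering $\E[F_s(\bm V, y)]$ inside $\tilde F_s$ matches the recentering $\E[\bm W_n(s)\mid\mathcal G] - \E_{\vec s}[\bm W_n(s)]$, which it does since $\E_{\vec s}[\bm W_n(s)] = \sum_i \E[F_s(\bm V, s_i)]$ after integrating the $\bm\xi^0$-coordinate.

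Finally I would establish joint convergence: $(\bm A_n, \bm B_n) \to (\bm{\mathcal N}(0,\sigma(s)^2), \bm{\mathcal N}(0,\rho(s)^2))$ with the two limits independent, whence the sum converges to $\bm{\mathcal N}(0, \sigma(s)^2 + \rho(s)^2)$. Independence is not automatic since both terms live on the same probability space, but it follows from the tower structure: conditionally on $\mathcal G$, the term $\bm A_n$ is asymptotically $\bm{\mathcal N}(0,\sigma(s)^2)$ with a \emph{deterministic} (a.s.-constant-in-the-limit) variance, and $\bm B_n$ is $\mathcal G$-measurable; so one can run a conditional-characteristic-function argument, $\E[e^{it\bm A_n + iu\bm B_n}\mid\mathcal G] = e^{iu\bm B_n}\E[e^{it\bm A_n}\mid\mathcal G] \to e^{iu\bm B_n} e^{-t^2\sigma(s)^2/2}$ a.s., and then take expectations using the $\bm B_n$-CLT together with dominated convergence. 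The main obstacle, and the place requiring the most care, is the non-stationarity flagged in the abstract: conditionally on $\vec s$, the variables $F_s(\bm\xi^0_p, s_{\pi(p)})$ are \emph{not} a stationary sequence, so the quenched CLT for $\sum_p g(\bm\xi_p, \bm s_p)$ cannot be quoted from classical stationary-mixing CLTs and is exactly the independent-interest theorem one must prove separately (presumably via a Bernstein big-block/small-block argument with block sizes $p,q$, which explains the four technical conditions on $p(n), q(n), \alpha_n$ in \cref{ass:CLT}); propagating its conclusion back through the two conditionings, and verifying that the limiting variance assembles into the stated $\sigma(s)^2 + \rho(s)^2$, is the crux of the argument.
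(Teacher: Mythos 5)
Your architecture (condition, split off the conditional mean, use the independent-interest quenched CLT for $\sum_j \tilde F_s(\bm\xi_j,s_j)$, then prove asymptotic independence via a conditional characteristic-function argument) is the right one, but there is a concrete error in the variance bookkeeping caused by conditioning on too much. If $\mathcal G$ contains \emph{all} the processes $\bm\xi^i$, including the opponents' coordinates $\bm\xi^i_{p_i}$, then the conditional Bernoulli parameter of match $i$ is $v_i=f(s\cdot\bm\xi^0_{p_i},\,\bm s_i\cdot\bm\xi^i_{p_i})$, and by the law-of-large-numbers arguments of \cref{sect:LLN} one gets
\begin{equation}
\frac{1}{2n}\sum_i v_i(1-v_i)\;\longrightarrow\;\ell(s)-\E\!\left[f\!\left(s\cdot\bm V,\bm U\cdot\bm V'\right)^2\right],
\end{equation}
which is \emph{not} $\sigma(s)^2=\ell(s)-\E\!\left[F_s(\bm V,\bm U)^2\right]$: by Jensen the two differ by $\E\!\left[f(s\bm V,\bm U\bm V')^2\right]-\E\!\left[F_s(\bm V,\bm U)^2\right]=\E\!\left[\Var\!\left(f(s\bm V,\bm U\bm V')\,\middle|\,\bm V,\bm U\right)\right]$, which is strictly positive unless $f(s\cdot x,y\cdot\bm V')$ is a.s.\ constant in $\bm V'$. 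Integrating out the single coordinate $\bm\xi^i_{p_i}$ does not commute with the square, so ``one recognizes $\E[F_s-F_s^2]$'' is false, and your claim $\bm A_n\to\bm{\mathcal N}(0,\sigma(s)^2)$ fails as stated. Correspondingly, the martingale-difference remainder you create inside $\bm B_n$ by replacing $f(s\bm\xi^0_{p_i},\bm s_i\bm\xi^i_{p_i})$ with $F_s(\bm\xi^0_{p_i},\bm s_i)$ does \emph{not} contribute the $\E[\tilde F_s(\bm V,\bm U)^2]$ term of $\rho(s)^2$ (that term is already the $k=0$ part of $\rho_g^2$ delivered by \cref{prop:clt_sum_of_the_g} applied to $g=F_s$); it contributes exactly the missing $\E[f^2]-\E[F_s^2]$. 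The two errors compensate, so the total is still $\sigma(s)^2+\rho(s)^2$, but a proof following your intermediate claims literally would break.

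The clean fix is what the paper does: condition only on $\bm\xi^0$ (after relabelling so that $T_0$ meets $T_j$ on day $j$, which removes the permutation $\pi$ from the picture). Then the match indicators are conditionally independent Bernoullis with parameters $F_s(\bm\xi^0_j,s_j)$, the averaged conditional variance $\frac{1}{2n}\sum_j\left(F_s-F_s^2\right)(\bm\xi^0_j,s_j)$ converges in probability to exactly $\sigma(s)^2$, and the conditional-mean fluctuation $\frac{1}{\sqrt{2n}}\sum_j\tilde F_s(\bm\xi^0_j,s_j)$ is precisely the object of \cref{prop:clt_sum_of_the_g}, giving $\rho(s)^2$. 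The paper carries this out in a single characteristic-function computation: factorize $\E_{\vec s}[e^{it\bm X_n}\mid(\bm\xi^0_r)_r]$ over the matches, Taylor-expand $\log\left(1+(e^{it/\sqrt{2n}}-1)F_s(\bm\xi_j,s_j)\right)$, and split the result into a factor handled by \cref{prop:clt_sum_of_the_g} and a factor converging to $e^{-t^2\sigma(s)^2/2}$; this replaces your separate conditional-CLT-plus-joint-convergence step, which under your finer conditioning would additionally require joint asymptotic normality and independence of the three pieces it creates. With the conditioning corrected to $\bm\xi^0$ alone, your two-term decomposition together with the conditional characteristic-function independence argument is essentially the paper's proof.
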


\begin{rem}
	The assumption that the initial strengths of the teams $(\bm s_i)_{i\in\N}$ are independent could be relaxed from a theoretical point of view, but we believe that this is a very natural assumption for our model. 
\end{rem}

\subsubsection{Originality of our results and analysis of the limiting variance: a quenched CLT for functionals of ergodic processes and i.i.d.\ random variables}\label{sect:litterature_sum_var}
We now comment on the originality of our results and contextualize them in relation to the established  literature and prior studies on sums of dependent and not equi-distributed random variables. Additionally, we give some informal explanations  on the two components $\sigma(s)^2$ and $\rho(s)^2$ of the variance of the limiting Gaussian random variable in \cref{eq:CLT}. 

We start by noticing that, without loss of generality, we can assume that for every $j\in[2n-1]$, the team $T_{0}$ plays against the team $T_j$ the $j$-th day of the league. Denoting by $W_{j}=W_j(s)$ the event that the team $T_{0}$ wins against the team $T_j$, then $\bm W_n(s)$ rewrites as 
\begin{equation}
	\bm W_n(s)=\sum_{j=1}^{2n-1}\mathds{1}_{W_j}. 
\end{equation}
Note that the Bernoulli random variables $(\mathds{1}_{W_j})_{j\in[2n-1]}$ are only independent conditionally on the process $(\bm \xi^{0}_j)_{j\in[2n-1]}$.
In addition, under our assumptions, we have that the conditional parameters of the Bernoulli random variables are given by $\P_{\vec{s}} \left(W_j\middle | \bm \xi^{0}_j\right)= F_s (\bm \xi^{0}_j, s_j )$.
Therefore, under $\P_{\vec{s}}$, the random variable $\bm W_n(s)$ is a sum of Bernoulli  random variables that are \emph{neither independent nor identically distributed}. As a consequence, the proofs of the quenched law of large numbers (see \cref{sect:LLN}) and of the quenched central limit theorem (see \cref{sect:CLT}) do not follow from a simple application of classical results in the literature. 

We recall that it is quite simple to relax the identically distributed assumption in order to obtain a central limit theorem: the Lindeberg criterion, see for instance \cite[Theorem 27.2]{MR1324786}, gives a sufficient (and almost necessary) criterion for a sum of independent random
variables to converge towards a Gaussian random variable (after suitable renormalization). Relaxing independence is more delicate and there is no universal theory to do it. In the present paper, we combine two well-known theories to obtain our results: the theory for stationary ergodic processes (see for instance \cite{MR74711, MR0148125, MR0322926, MR1176496, MR2325294}) and the theory for $m$-dependent random variables (see for instance \cite{MR26771,MR350815,MR1747098}) and dependency graphs (see for instance \cite{MR681466, MR920273, MR1048950, MR2068873, MR4105789}).

To explain the presence of the two terms in the variance, we start by noticing that using the law of total conditional variance, the conditional variance of $\mathds{1}_{W_j}$ is given by $\Var_{\vec{s}} \left(\mathds{1}_{W_j}\middle | \bm \xi^{0}_j\right)= F_s (\bm \xi^{0}_j, s_j ) - (F_s (\bm \xi^{0}_j, s_j ))^2$. The term $\sigma(s)^2$ arises as the limit of 
$$ \frac{1}{2n} \sum_{j=1}^{2n-1} \Var_{\vec{s}} \left(\mathds{1}_{W_j}\middle | \bm \xi^{0}_j\right),$$
and this is in analogy with the case of sums of independent but not identically distributed Bernoulli random variables. The additional term $\rho(s)^2$, on the contrary, arises from the fluctuations of the conditional parameters of the Bernoulli variables, i.e.\  $\rho(s)^2$ comes from the limit of 
\begin{equation}\label{eq:fbewbfw}
	\frac{1}{2n} \sum_{j=1}^{2n-1} \Var \left(\P_{\vec{s}} \left(W_j\middle | \bm \xi^{0}_j\right) \right)=\frac{1}{2n} \sum_{j=1}^{2n-1} \Var \left( F_s (\bm \xi^{0}_j, s_j ) \right).
\end{equation}
Note that an additional difficulty arises from the fact that the sums in the last two equations are not independent (but we will show that they are asymptotically independent).

To study the limit in \cref{eq:fbewbfw} we prove in \cref{sect:CLT} the following general result that we believe to be of independent interest.

\begin{thm}\label{prop:clt_sum_of_the_g}
	Suppose that the assumptions in \cref{eq:stationarity0,eq:stationarity}, and in \cref{ass:CLT} hold. Let $g:\R^2_+\to\R_+$ be a bounded, measurable function, and define $\tilde g :\R^2_+\to\R$ by $\tilde g (x,y) \coloneqq g(x,y) - \E \left[ g\left( \bm V, y\right) \right]$. Then, the quantity
	\begin{equation}\label{eq:def_of_rho}
		\rho_g^2 \coloneqq  \E \left[ \tilde g (\bm V, \bm U)^2 \right] + 2 \cdot \sum_{k=1}^{\infty} \E\left[ \tilde g(\bm{\xi}_1, \bm U) \tilde g (\bm{\xi}_{1+k}, \bm U') \right]
	\end{equation}
	is finite. Moreover, for $\mu^{\N}$-almost every sequence $(s_i)_{{i\in\N}}\in\R^{\N}_{+}$, the following convergence holds
	\begin{equation}\label{eq:CLT2}
		\frac{\sum_{j=1}^{2n-1}\tilde g\left(\bm \xi_j,s_j\right)}{\sqrt{2n}}\xrightarrow{d} \bm{\mathcal{N}}(0,\rho_g^2).
	\end{equation}
\end{thm}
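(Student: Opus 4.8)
The plan is to prove a quenched CLT for the partial sums $S_n = \sum_{j=1}^{2n-1}\tilde g(\bm\xi_j,s_j)$ by a big-block/small-block (Bernstein) argument adapted to the quenched setting, combined with a Borel--Cantelli step to upgrade an in-probability statement over the randomness of $(\bm s_i)$ to an almost sure statement. First I would record that, conditionally on $(\bm s_i)_{i\in\N}$, the summands $\tilde g(\bm\xi_j,s_j)$ are bounded, mean-zero (by the centering in the definition of $\tilde g$, since $\E[\tilde g(\bm V,y)]=0$ for every fixed $y$), and inherit the strong mixing coefficients $\alpha_n$ of $\bm\xi$ in the sense that the conditional mixing coefficients are dominated by $\alpha_n$ — this is immediate because fixing a realization of $(\bm s_i)$ does not enlarge the relevant $\sigma$-algebras of $\bm\xi$. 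I would then compute the conditional variance $\E[S_n^2\mid (\bm s_i)]$: expanding the square gives a diagonal contribution $\sum_j \E[\tilde g(\bm\xi_j,s_j)^2]$ and off-diagonal terms $\sum_{j\neq k}\E[\tilde g(\bm\xi_j,s_j)\tilde g(\bm\xi_k,s_k)]$, and using stationarity of $\bm\xi$ together with the law of large numbers for the bounded functions $y\mapsto \E[\tilde g(\bm\xi_1,y)^2]$ and $(y,y')\mapsto \E[\tilde g(\bm\xi_1,y)\tilde g(\bm\xi_{1+k},y')]$ applied to the i.i.d.\ sequence $(\bm s_i)$, one gets, for $\mu^{\N}$-a.e.\ $(s_i)$,
\begin{equation}
\frac{1}{2n}\E\big[S_n^2\mid (\bm s_i)\big]\xrightarrow{n\to\infty}\E[\tilde g(\bm V,\bm U)^2]+2\sum_{k=1}^{\infty}\E[\tilde g(\bm\xi_1,\bm U)\tilde g(\bm\xi_{1+k},\bm U')]=\rho_g^2,
\end{equation}
where absolute summability of the series (hence finiteness of $\rho_g^2$) follows from the standard covariance inequality for strongly mixing sequences, $|\Cov(X,Y)|\le 4\|X\|_\infty\|Y\|_\infty\,\alpha_k$, together with $\sum_k\alpha_k<\infty$ from \cref{eq:strongly_mix_plus}; some care is needed here because one must also control the a.s.\ convergence of the series of limits in the $(\bm s_i)$-randomness, which again uses a dominated-convergence/Borel--Cantelli argument with the $\alpha_k$-bound as the summable dominating sequence.

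Next I would carry out the block decomposition: split $\{1,\dots,2n-1\}$ into alternating big blocks of length $p=p(n)$ and small blocks of length $q=q(n)$ as furnished by \cref{ass:CLT}, giving roughly $r\sim 2n/p$ big blocks. Let $\bm Y_1,\dots,\bm Y_r$ be the big-block sums and $\bm Y_1',\dots$ the small-block sums. The small blocks are negligible: $\E[(\sum \bm Y_i')^2\mid(\bm s_i)]$ is, by the same covariance estimate, $O(rq)=O(nq/p)=o(n)$ using $q=o(p)$, so $(\sum\bm Y_i')/\sqrt{2n}\to 0$ in conditional $L^2$. For the big blocks, I would first replace the (weakly) dependent $\bm Y_i$ by independent random variables $\hat{\bm Y}_i$ with the same individual conditional laws, and control the error by a telescoping/characteristic-function argument: $|\E[e^{it\sum \bm Y_i/\sqrt{2n}}\mid(\bm s_i)]-\prod_i\E[e^{it\bm Y_i/\sqrt{2n}}\mid(\bm s_i)]|\le C\,r\,\alpha_q$ by the classical Volkonskii--Rozanov/Ibragimov lemma on mixing and products of characteristic functions, and $r\alpha_q=O(n p^{-1}\alpha_q)=o(1)$ by the third condition in \cref{ass:CLT}. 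Finally I would apply the Lindeberg--Feller CLT to the independentized array $\hat{\bm Y}_i/\sqrt{2n}$: the variances sum to $\rho_g^2$ by the variance computation above (the small blocks and the $O(rq)$ boundary terms contributing $o(1)$), and the Lindeberg condition holds trivially because each $\bm Y_i$ is bounded by $p\|\tilde g\|_\infty=o(\sqrt n)$ (from $p=o(n)$). The fourth condition $\tfrac{p}{n}\sum_{j=1}^p j\alpha_j=o(1)$ is what guarantees that the per-block variances $\tfrac1{2n}\E[\bm Y_i^2\mid(\bm s_i)]$ are uniformly small and that the block-wise variances genuinely add up to $\rho_g^2$ without a defect coming from truncating the covariance series at lag $p$.

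The last ingredient is the passage from convergence in distribution under $\P$ (averaged over $(\bm s_i)$) to convergence for $\mu^{\N}$-a.e.\ fixed sequence. Here I would work at the level of characteristic functions: for fixed $t$, set $\bm\varphi_n(t)=\E[e^{it S_n/\sqrt{2n}}\mid(\bm s_i)]$, show by the above that $\bm\varphi_n(t)\to e^{-t^2\rho_g^2/2}$ in probability, and then either (i) use a subsequence/Borel--Cantelli argument — one needs a quantitative rate in $n$ for the block estimates to make $\sum_n\P(|\bm\varphi_n(t)-e^{-t^2\rho_g^2/2}|>\varepsilon)<\infty$ — or (ii), more robustly, prove the a.s.\ convergence of the conditional variance $\tfrac1{2n}\E[S_n^2\mid(\bm s_i)]\to\rho_g^2$ (which is a genuine LLN statement and holds a.s.\ by the first paragraph) and observe that all the error terms in the block argument are bounded by deterministic quantities ($r\alpha_q$, $nq/p$, $p\|\tilde g\|_\infty/\sqrt n$) that do not depend on the realization of $(\bm s_i)$, so once the conditional variances converge a.s.\ the whole CLT holds a.s.\ for a single fixed good sequence; finally use a countable dense set of $t$ and Lévy's continuity theorem to conclude \cref{eq:CLT2}.

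The main obstacle I anticipate is exactly this last quenched upgrade: ensuring that the block decomposition argument produces error bounds uniform in the conditioning sequence $(s_i)$, together with an a.s.\ (not merely in-probability) control of the conditional variance profile $j\mapsto \E[\tilde g(\bm\xi_1,s_j)^2]$ and of the lagged products $\E[\tilde g(\bm\xi_1,s_j)\tilde g(\bm\xi_{1+k},s_{j'})]$ summed against the $\alpha_k$ — since these involve an interchange of a limit in $n$, a sum over lags $k$, and the a.s.\ behavior of the i.i.d.\ sequence, and the non-stationarity (in the conditioned process) means one cannot simply invoke a standard stationary-ergodic CLT off the shelf.
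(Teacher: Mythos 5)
Your overall architecture --- quenched variance asymptotics via a law of large numbers in the strengths, Bernstein big/small blocks, negligibility of the small blocks by a covariance--Chebyshev bound, decoupling of the big blocks through the telescoping characteristic-function estimate $O(r\alpha_q)=O(np^{-1}\alpha_q)=o(1)$, and the remark that the block-error bounds are deterministic so that the quenched statement follows once the conditional variance converges a.s.\ --- is essentially the proof in the paper. (The paper proves the a.s.\ variance asymptotics in \cref{lem:variance_is_linear}, using a uniform-in-lag LLN, \cref{prop:uniform_bound}, obtained by a concentration inequality plus Borel--Cantelli, together with a monotone class argument; your ``dominated convergence/Borel--Cantelli'' sketch is pointing at the same device, and the obstacle you flag at the end is indeed where that work happens.)

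The genuine gap is in your CLT step for the independentized big blocks. You claim the Lindeberg condition is trivial ``because each $\bm Y_i$ is bounded by $p\|\tilde g\|_\infty=o(\sqrt n)$ (from $p=o(n)$)''. But $p=o(n)$ does not imply $p=o(\sqrt n)$, and under \cref{ass:CLT} the block length $p$ may be much larger than $\sqrt n$: the third condition $np^{-1}\alpha_q=o(1)$ can force $p$ to be large when $\alpha_q$ decays slowly, so you cannot simply shrink $p$ below $\sqrt n$ without breaking that condition. When $p\gg\sqrt n$ the truncation event $\{|\bm Y_i|>\varepsilon\sqrt{2n}\,\rho_g\}$ need not be empty and your argument gives no control of the Lindeberg sums. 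This is exactly where the fourth assumption $\frac{p}{n}\sum_{j\leq p}j\alpha_j=o(1)$ is really used: the paper verifies Lyapounov's condition with $\delta=2$ via the fourth-moment bound $\E[\bm Y_{n,i}^4]=O\bigl(p^2\sum_{j\leq p}j\alpha_j\bigr)$ (\cref{lem:CLT_bound_fourth_moment}), which yields $\rho_{g,n}^{-4}\sum_i\E[\bm Y_{n,i}^4]=O\bigl(\frac{p}{n}\sum_{j\leq p}j\alpha_j\bigr)\to0$. Your attribution of the fourth condition to ``the per-block variances being uniformly small and the absence of a truncation defect at lag $p$'' is therefore off: those points already follow from $p=o(n)$ and $\sum_k\alpha_k<\infty$; what the condition actually buys is precisely the Lyapounov/Lindeberg verification that your proposal dismisses as trivial. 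Consistently, \cref{rem:fbkwufobw} shows the fourth condition is automatic exactly in the regime $p=O(\sqrt n)$ where your boundedness argument would be valid; outside that regime you need the fourth-moment estimate, so this step of your proof must be replaced by the Lyapounov argument.
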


The main difficulty in studying the sum $\sum_{j=1}^{2n-1}g\left(\bm \xi_j,s_j\right)$ is that fixing a deterministic realization $(s_i)_{{i\in\N}}\in\R^{\N}_{+}$ imposes that the random variables $g\left(\bm \xi_j,s_j\right)$ are not stationary anymore.

In many of the classical results available in the literature (see references given above) -- in addition to moment and mixing conditions -- stationarity is assumed, and so we cannot directly apply these results. An exception are \cite{MR1492353,MR3257385}, where stationarity is not assumed but it is assumed a stronger version\footnote{This is assumption (B3) in \cite{MR3257385} that is also assumed in \cite{MR1492353}. Since in our case all moments of $g\left(\bm \xi_j,s_j\right)$ are finite (because $g$ is a bounded function), we can take the parameter $\delta$ in \cite{MR3257385} equal to $+\infty$ and thus condition (B3) in \cite{MR3257385} requires that $\sum_{n=1}^{\infty} n^2 \alpha_n <+\infty$.} of our condition in \cref{eq:strongly_mix_plus}. Therefore, to the best of our knowledge, \cref{prop:clt_sum_of_the_g} does not follow from known results in the literature.

\subsection{Calibration of the parameters of the model and some examples}\label{sect:param_and_examples}

An interesting feature of our model consists in the fact that the main parameters that characterize the evolution of the league can be statistically calibrated in order for the model to describe real-life tournaments. Such parameters are:

\begin{itemize}
\item The function $f:\mathbb R_+^2\to[0,1]$ that controls the winning probability of the matches.
\item The distribution $\mu$ of the initial strengths $(\bm s_i)_i$ of the teams.
\item The marginal distribution $\nu$ of the tilting process $\bm \xi$.
\end{itemize}

We end this section with two examples. The first one is more theoretical and the second one more related to the statistical calibration.

\begin{exmp}\label{exmp:league}
	We assume that:
	\begin{itemize}
		\item $f(x,y)=\frac{x}{x+y}$;
		\item for all $i\in\N$, the initial strengths $\bm s_i$ are uniformly distributed in $[0,1]$, i.e.\ $\mu$ is the Lebesgue measure on $[0,1]$;
		\item the tilting process $\bm \xi$ is a Markov chain with state space $\{a,b\}$ for some $a\in(0,1),b\in(1,\infty)$, with transition matrix $\begin{pmatrix}
		p_a & 1-p_a \\
		1-p_b & p_b 
		\end{pmatrix}$ for some $p_a,p_b\in(0,1)$. Note that the invariant measure $\nu=(\nu_a,\nu_b)$ is equal to $	\left(\frac{1-p_b}{2-p_a-p_b},\frac{1-p_a}{2-p_a-p_b}\right)$.
	\end{itemize} 
	Under this assumptions, \cref{thm:LLN} guarantees that for any fixed $s\in\R_+$ and for almost every realization $\vec{s}=( s_i)_{{i\in\N}}$ of the random sequence $(\bm s_i)_{{i\in\N}}\in[0,1]^{\N}$, the following convergence holds under $\P_{\vec{s}}$:
	\begin{equation}
	\frac{\bm W_n(s)}{2n}\xrightarrow[n\to\infty]{P}\ell(s), 
	\end{equation}
	where 
	\begin{equation}\label{eq:exemp_l_S}
	\ell(s)=\int_{\R^3_+} \frac{s v}{s v+u v'}d\nu(v)d\nu(v')d\mu(u)=\sum_{i,j\in\{a,b\}} \frac{s\cdot i}{j}\log\left(1+\frac{j}{s\cdot i}\right)\nu_i\cdot \nu_j.
	\end{equation}
	In particular if $a=\frac{1}{2}, b=2, p_a=\frac 1 2, p_b=\frac 1 2$ then
	\begin{equation}\label{eq:expression_exemp_ls}
	\ell(s)=\frac{s}{16}\log\left(\frac{(1+s)^8(4+s)(1+4s)^{16}}{2^{32}\cdot s^{25}}\right),
	\end{equation}
	whose graph is plotted in \cref{fig:simple_exemp}.
	\begin{figure}[htbp]
		\centering
		\includegraphics[scale=.5]{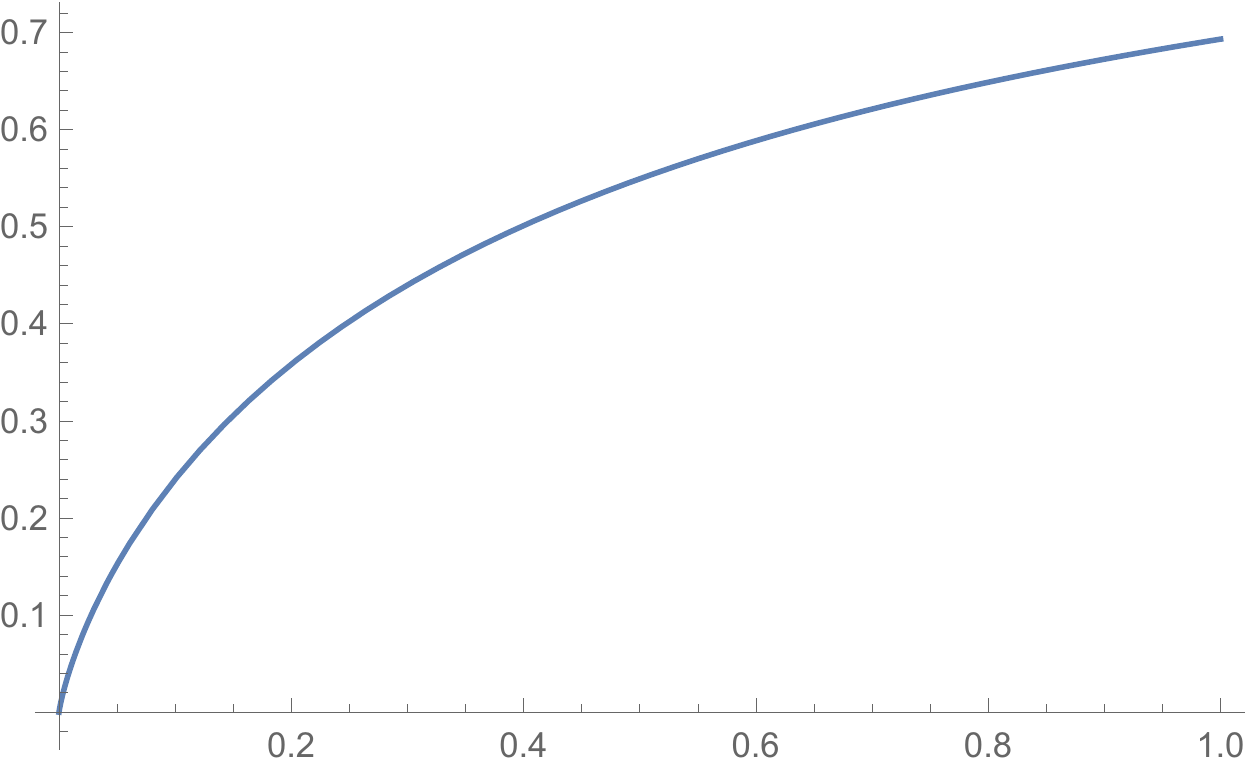}
		\caption{The graph of the function $\ell(s)$ in \cref{eq:expression_exemp_ls} for $s\in[0,1]$. \label{fig:simple_exemp}}
	\end{figure}

	In addition, \cref{thm:CLT} implies that for any $s\in\R_+$ and for almost every realization $\vec{s}=( s_i)_{{i\in\N}}$ of the random sequence $(\bm s_i)_{{i\in\N}}\in[0,1]^{\N}$, the following convergence also holds under $\P_{\vec{s}}$: 
	\begin{equation}
	\frac{\bm W_n(s)- \E_{\vec{s}}[\bm W_n(s)] }{\sqrt{2n}}\xrightarrow{d} \bm{\mathcal{N}}(0,\sigma(s)^2 + \rho(s)^2),
	\end{equation}
	where $\sigma(s)^2$ and $\rho(s)^2$ can be computed as follows. From \cref{eq:variance1_CLT,eq:def_rho_s} we know that
	\begin{equation}
		\sigma(s)^2=\ell(s)-\E\left[\left(F_s(\bm V,\bm U)\right)^2\right]
	\end{equation}
	and that
	\begin{equation}
		\rho(s)^2=  \E \left[ \tilde F_s (\bm V, \bm U)^2 \right] + 2 \cdot \sum_{k=1}^{\infty} \E\left[ \tilde F_s(\bm{\xi}_1, \bm U) \tilde F_s (\bm{\xi}_{1+k}, \bm U') \right].
	\end{equation}
	Recall that $\ell(s)$ was computed in \cref{eq:exemp_l_S}. Note that $$F_s(x,y)=\E\left[f\left(s\cdot x,y\cdot \bm V'\right)\right]=\sum_{v'\in \{a,b\}}f\left(s\cdot x,y\cdot v'\right)\nu_{v'}$$
	and that
	$$\tilde F_s(x,y)=F_s(x,y)-\E\left[F_s(\bm V,y)\right]=\sum_{v'\in\{a,b\}}f\left(s\cdot x,y\cdot v'\right)\nu_{v'}-\sum_{(v,v')\in \{a,b\}^2}f\left(s\cdot v,y\cdot v'\right)\nu_{v'}\nu_{v}.$$
	Therefore
	\begin{equation}
		\E\left[\left(F_s(\bm V,\bm U)\right)^2\right]=\sum_{v\in \{a,b\}}\left(\int_0^1\left(\sum_{v'\in\{a,b\}}f\left(s\cdot v,u\cdot v'\right)\nu_{v'}\right)^2du\right)\nu_{v}
	\end{equation}
	and 
	\begin{equation}
	\E\left[\left(\tilde F_s(\bm V,\bm U)\right)^2\right]=\sum_{w\in \{a,b\}}\left(\int_0^1\left(\sum_{v'\in\{a,b\}}f\left(s\cdot w,u\cdot v'\right)\nu_{v'}-\sum_{(v,v')\in \{a,b\}^2}f\left(s\cdot v,u\cdot v'\right)\nu_{v'}\nu_{v}\right)^2du\right)\nu_{v}.
	\end{equation}
	Moreover,
	\begin{equation}
	\sum_{k=1}^{\infty} \E\left[ \tilde F_s(\bm{\xi}_1, \bm U) \tilde F_s (\bm{\xi}_{1+k}, \bm U') \right]=	\sum_{k=1}^{\infty}  \sum_{(i,j)\in \{a,b\}^2}  \P(\bm{\xi}_1=i,\bm{\xi}_{1+k}=j) \int_0^1 \tilde F_s(i, u)\; du \int_0^1 \tilde F_s (j, u') \; du'.
	\end{equation}
	It remains to compute $\P(\bm{\xi}_1=i,\bm{\xi}_{1+k}=j)$ for $i,j\in \{a,b\}$. Note that
	\begin{equation}
	\begin{pmatrix}
	p_a & 1-p_a \\
	1-p_b & p_b 
	\end{pmatrix}=
	\begin{pmatrix}
	1 & \frac{1-p_a}{p_b-1} \\
	1 & 1
	\end{pmatrix}
	\begin{pmatrix}
	1 & 0 \\
	0 & (p_a+p_b-1)
	\end{pmatrix}
	\begin{pmatrix}
	\frac{p_b-1}{p_a+p_b-2} & \frac{p_a-1}{p_a+p_b-2} \\
	\frac{1-p_b}{p_a+p_b-2} & \frac{p_b-1}{p_a+p_b-2} 
	\end{pmatrix}
	\eqqcolon SJS^{-1}.
	\end{equation}
	Therefore, 
	\begin{align}
	&\P(\bm{\xi}_1=a,\bm{\xi}_{1+k}=a)=\frac{p_b-1 + (p_a-1) (p_a + p_b-1)^k}{p_a + p_b-2}\cdot \nu_a,\\
	&\P(\bm{\xi}_1=a,\bm{\xi}_{1+k}=b)=\frac{(p_a-1) ((p_a + p_b-1)^k-1)}{p_a + p_b-2}\cdot \nu_a,\\
	&\P(\bm{\xi}_1=b,\bm{\xi}_{1+k}=a)=\frac{(p_b-1)((p_a + p_b-1)^k-1)}{p_a + p_b-2}\cdot \nu_b,\\
	&\P(\bm{\xi}_1=b,\bm{\xi}_{1+k}=b)=\frac{p_a-1 + (p_b-1) (p_a + p_b-1)^k}{p_a + p_b-2}\cdot \nu_b.
	\end{align}
	With some tedious but straightforward computations\footnote{We developed a \emph{Mathematica} software to quickly make such computations for various choices of the function $f(x,y)$, and of the parameters $a,b,p_a,p_b$. The software is available at the following \href{https://drive.google.com/drive/folders/1CXZVpe-HJvtJNNGlThu2J-PO9OHme0KP?usp=sharing}{link}.}, we can explicitly compute $\sigma(s)^2$ and $\rho(s)^2$.
	The graphs of the two functions $\sigma(s)^2$ and $\rho(s)^2$ for $s\in[0,1]$ are plotted in \cref{fig:diagram_variance} for three different choices of the parameters $a,b,p_a,p_b$. It is interesting to note that $\sigma(s)^2$ is much larger than $\rho(s)^2$ when $p_a$ and $p_b$ are small, $\sigma(s)^2$ is comparable with $\rho(s)^2$ when $p_a$ and $p_b$ are around 0.9, and $\rho(s)^2$ is much larger than $\sigma(s)^2$ when $p_a$ and $p_b$ are very  close to 1.
	
	\begin{figure}[htbp]
		\centering
		\includegraphics[scale=.4]{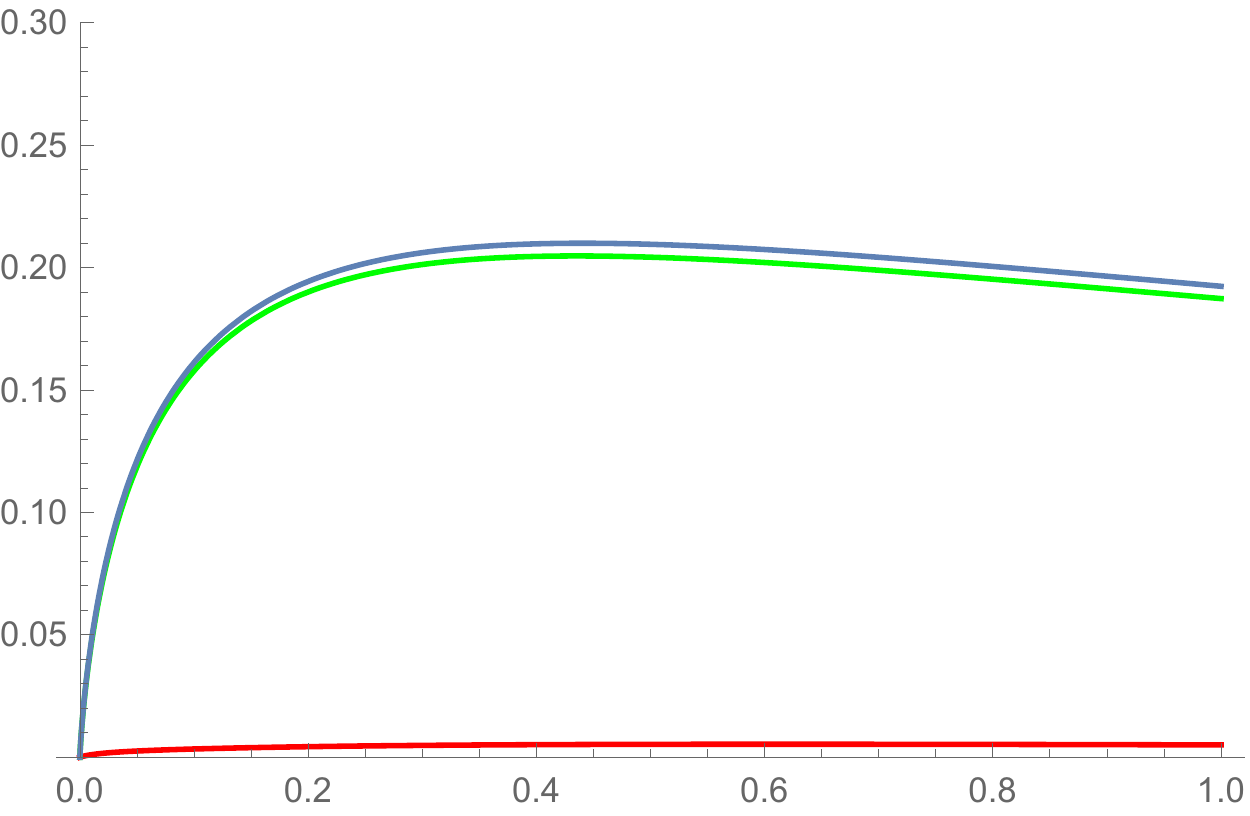}
		\hspace{0.1cm}
		\includegraphics[scale=.4]{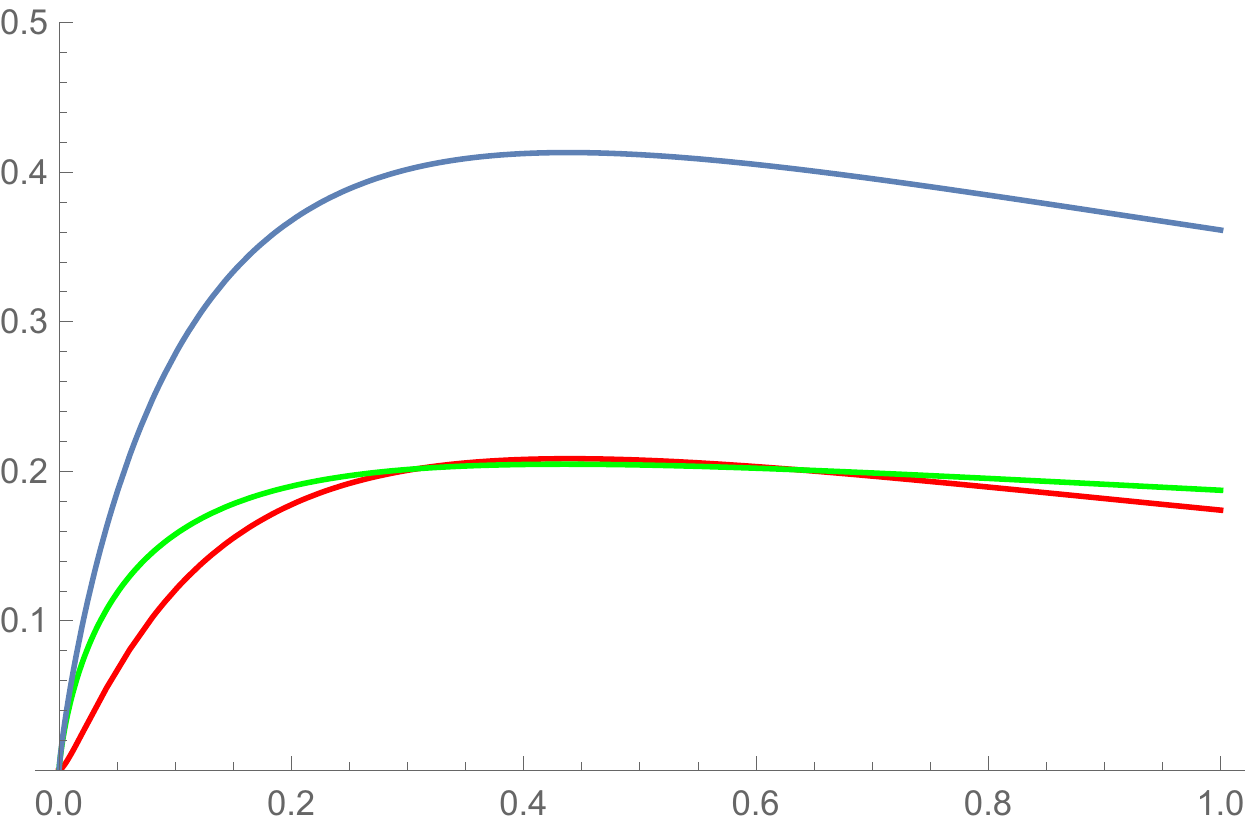}
		\hspace{0.1cm}
		\includegraphics[scale=.4]{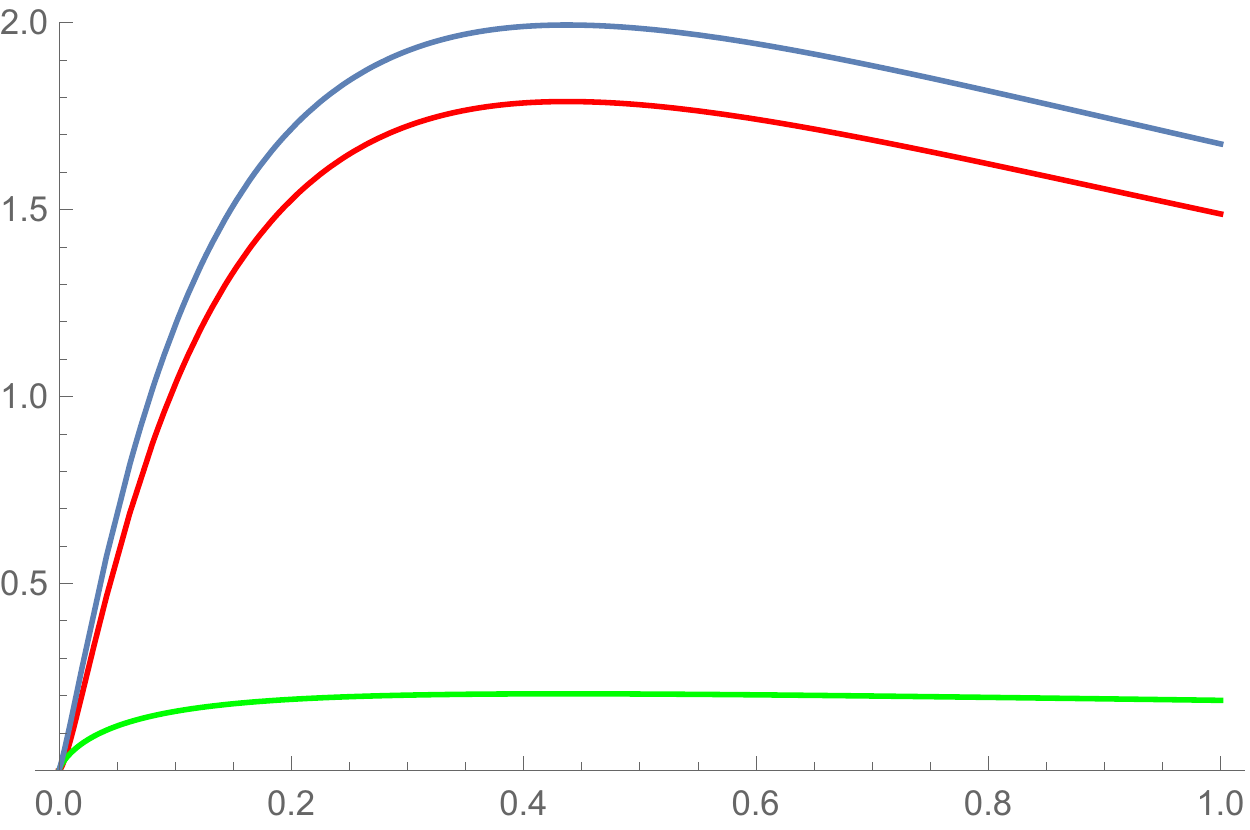}
		\caption{In green the graph of $\sigma(s)^2$. In red the graph of $\rho(s)^2$. In blue the graph of $\sigma(s)^2+\rho(s)^2$.
			\textbf{Left:} The parameters of the model are $a=1/2,b=2,p_a=2/5,p_b=2/5$.
			\textbf{Middle:} The parameters of the model are $a=1/2,b=2,p_a=92/100,p_b=92/100$.
			\textbf{Right:} The parameters of the model are $a=1/2,b=2,p_a=99/100,p_b=99/100$.  \label{fig:diagram_variance}}
	\end{figure}	
\end{exmp}

\begin{exmp}	
	We collect here some data related to the Italian national basketball league in order to compare some real data with our theoretical results. We believe that it would be interesting to develop a more accurate and precise analysis of real data in some future projects.
	
	In \cref{fig:table_basket}, the rankings of the last 22 national leagues played among exactly 16 teams in the league are shown (some leagues, like the 2011-12 league, are not tracked since in those years the league was not formed by 16 teams). 
	\begin{figure}[htbp]
		\centering
		\includegraphics[scale=0.95]{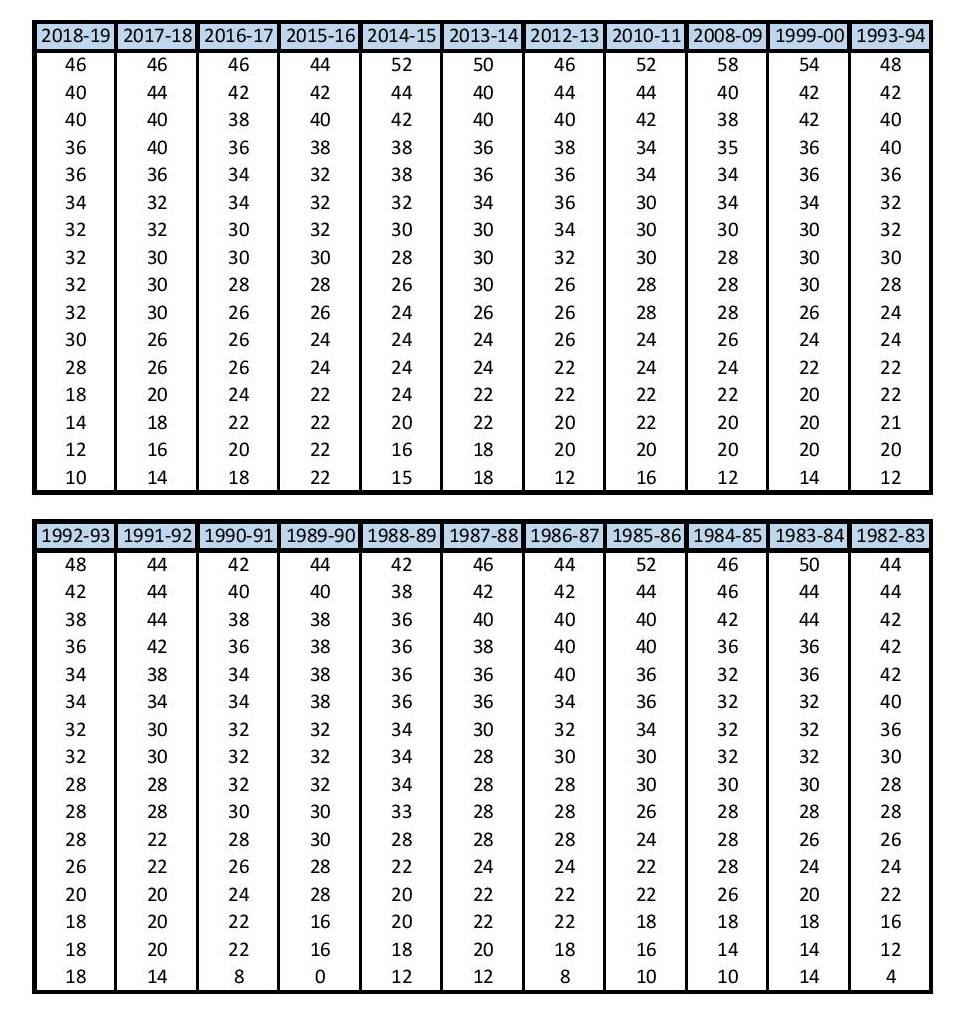}
		\caption{The rankings of the last 22 national italian basket leagues played with exactly 16 teams in the league. In the Italian basket league every team plays two matches against every other team and every victory gives two points.\label{fig:table_basket}}
	\end{figure}
	The mean number of points of the 22 collected leagues are (from the team ranked first to the team ranked 16-th):
	\begin{multicols}{4}
		\begin{enumerate}
			\item 47,45
			\item 42,27
			\item 40,18
			\item 37,59
			\item 35,91
			\item 34,09
			\item 31,73
			\item 30,55
			\item 29,18
			\item 27,77
			\item 26,09
			\item 24,36
			\item 22,00
			\item 19,59
			\item 17,82
			\item 12,41
		\end{enumerate}
	\end{multicols}

	The diagram of this ranking is given in the left-hand side of \cref{fig:diagram_basket}. 
	
	\medskip
	
	As mentioned above, an interesting question consists in assessing whether it is possible to describe the behaviour of these leagues by using our model. More precisely, we looked for a function $f(x,y)$ and two distributions $\mu$ and $\nu$ such that the graph of $\ell(s)$ for $s\in[0,1]$ can well approximate the graph in the left-hand side of \cref{fig:diagram_basket}. We find out that choosing $\mu$ to be the uniform measure on the interval $[0.1,0.999]$, $\nu=0.6\cdot\delta_{0.25}+0.9\cdot\delta_{1.3},$ and
	\begin{equation}\label{eq:guess_f}
	f(x,y)\coloneqq\frac{g(x)}{g(x)+g(y)},\quad\text{with}\quad g(x)\coloneqq\log \left(1-\min\left\{\frac{x}{1.3},0.999\right\}\right),
	\end{equation}
	then the graph of $\ell(s)$ for $s\in[0.1,0.999]$, is the one given in the right-hand side of \cref{fig:diagram_basket}. Note that the two graphs have a similar convexity.
	\begin{figure}[htbp]
		\centering
		\includegraphics[scale=.55]{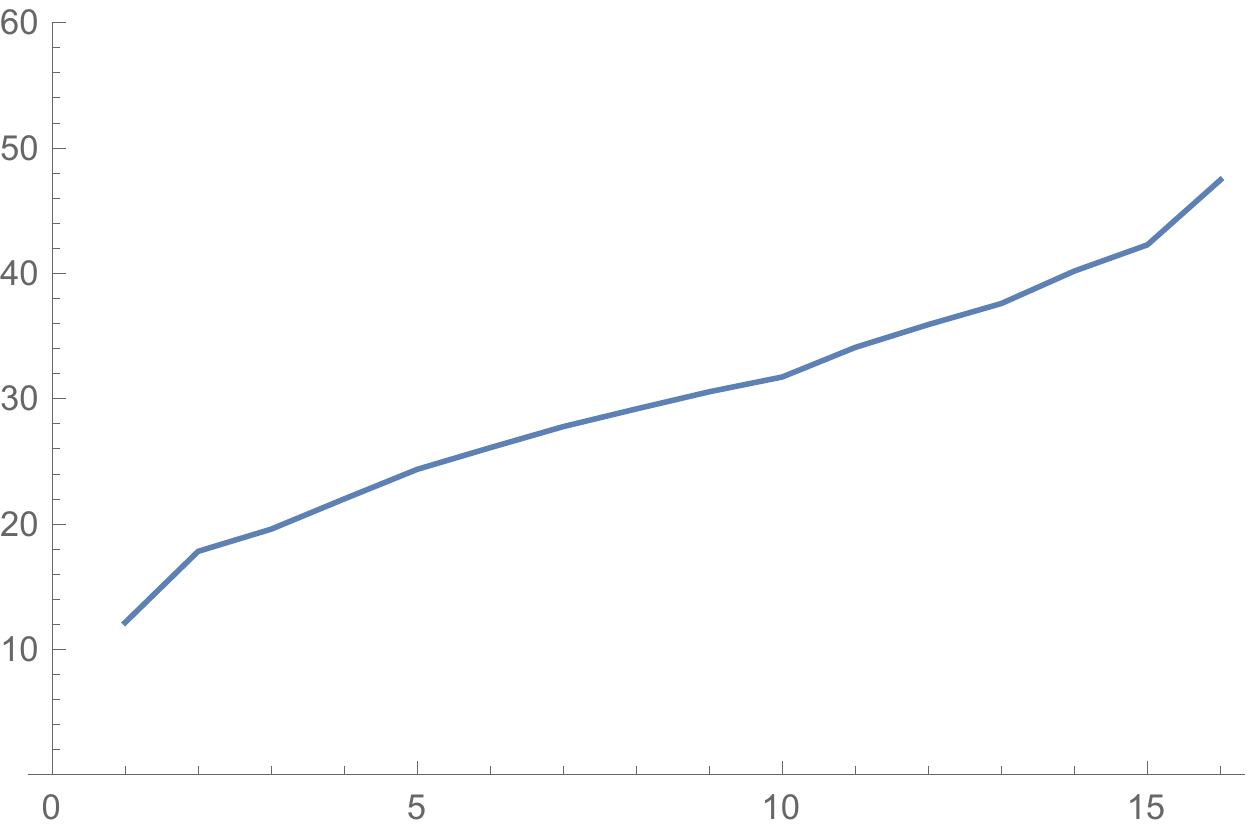}
		\hspace{1cm}
		\includegraphics[scale=.55]{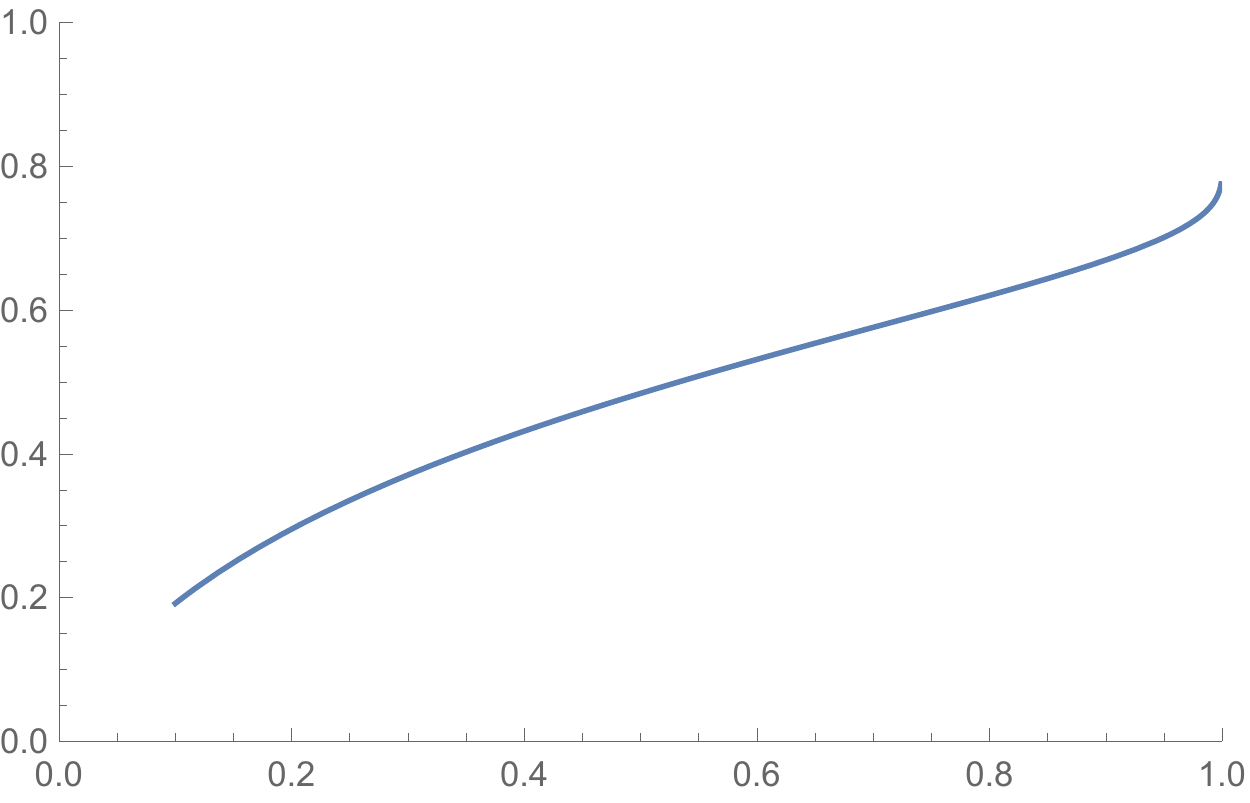}
		\caption{\textbf{Left:} The diagram of the mean number of points of the 22 leagues collected in \cref{fig:table_basket}. On the $x$-axis the teams are ranked from the weakest to the strongest; on the $y$-axis the mean number of points are plotted.
			\textbf{Right:} The graph of $\ell(s)$ for $s\in[0.1,0.999]$ for the specific $\mu,\nu$ and $f(x,y)$ given in (and before) \cref{eq:guess_f}. \label{fig:diagram_basket}}
	\end{figure}
\end{exmp}

\subsection{Open problems}

We collect some open questions and conjectures that we believe might be interesting to investigate in future projects:

\begin{itemize}
	\item Conditioning on the initial strengths of the teams, how many times do we need to run the league in order to guarantee that the strongest team a.s.\ wins the league? We point out that a similar question was investigated in \cite{ben2007efficiency} for the model considered by the authors.

	\item In the spirit of large deviations results, we believe it would be interesting to study the probability that the weakest team wins the league, again conditioning on the initial strengths of the teams.

	\item Another natural question is to investigate the whole final ranking of the league. We conjecture the following.
	
	For a sequence of initial strengths $(s_i)_{i\in [2n-1]_0}$, we denote by  $(\tilde T_i)_{i\in [2n-1]_0}$ the sequence of teams reordered according to their initial strengths $(s_i)_{i\in [2n-1]_0}$ (from the weakest to the strongest). Set
	\begin{equation}
	\tilde{\bm W}_n(i)\coloneqq\text{Number of wins of the team }\tilde T_{i}\text{ at the end of a league with $2n$ players}.
	\end{equation}
	Let  $\tilde{\bm {\mathcal W}}_n(x):[0,1]\to \R$ denote the piece-wise linear continuous function obtained by interpolating the values $\tilde{\bm W}_n(i/n)$ for all $i\in [2n-1]_0$.

	Denote by $H_{\mu}(y)$ the cumulative distribution function of $\mu$ and by $H_{\mu}^{-1}(x)$ the generalized inverse distribution function, i.e.\ $H_{\mu}^{-1}(x)=\inf\{y\in \R_{+}: H_{\mu}(y)\geq x \}$.
	\begin{conj}
		Suppose that the assumptions in \cref{ass:LLN} hold with the additional requirement that the function $f$ is continuous\footnote{The assumption that $f$ is continuous might be relaxed.}. For $\mu^{\N\cup\{0\}}$-almost every sequence $\vec{s}=(s_i)_{{i\in\N\cup\{0\}}}\in\R^{\N\cup\{0\}}_{+}$ and for every choice of the calendar of the league, under $\P_{\vec{s}}$ the following convergence of càdlàg processes holds
		\begin{equation}
		\frac{\tilde{\bm {\mathcal W}}_n(x)}{2n}\xrightarrow[n\to\infty]{P}\ell(H_{\mu}^{-1}(x)), 
		\end{equation}
		where $\ell(s)$ is defined as in \cref{thm:LLN} by
		\begin{equation}
		\ell(s)=\E\left[f\left(s\cdot\bm{V},\bm U\cdot\bm{V'}\right)\right]=\int_{\R^3_+} f\left(s\cdot v,u\cdot v'\right)d\nu(v)d\nu(v')d\mu(u).
		\end{equation}
	\end{conj}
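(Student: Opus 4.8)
The plan is to write $\tilde{\bm{\mathcal W}}_n(x)/(2n)$ as a deterministic (given $\vec s$) ``tilt‑averaged'' profile plus a centered fluctuation, to identify the profile limit via the Glivenko–Cantelli theorem together with the continuity of $f$, and to show that the fluctuation vanishes uniformly in the rank $x$. Set
\[
h_s(y)\coloneqq\E\left[f\left(s\cdot\bm V,y\cdot\bm{V'}\right)\right]=\int_{\R_+^2}f(sv,yv')\,d\nu(v)\,d\nu(v'),
\]
which, since $f$ is continuous and bounded, is bounded and jointly continuous on $\R_+^2$, non‑decreasing in $s$ and non‑increasing in $y$; note $\int h_s\,d\mu=\ell(s)$. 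Write $(\tilde s_k)_{k\in[2n-1]_0}$ for the initial strengths re‑ordered increasingly and $\widehat\mu_n=\frac1{2n}\sum_{k\in[2n-1]_0}\delta_{\tilde s_k}$ for their empirical measure (which does not depend on the re‑ordering). Since each team plays each other team exactly once, and on every day the tilts are distributed as $\nu$ and independent of everything, the match of $\tilde T_i$ against $\tilde T_k$ is won by $\tilde T_i$ with probability $h_{\tilde s_i}(\tilde s_k)$, so \emph{for every calendar}
\[
\frac{\E_{\vec s}\!\left[\tilde{\bm W}_n(i)\right]}{2n}=\frac1{2n}\sum_{k\neq i}h_{\tilde s_i}(\tilde s_k)=\int h_{\tilde s_i}(y)\,d\widehat\mu_n(y)+O(1/n).
\]
This makes calendar‑independence transparent and exhibits the candidate profile $\bar{\mathcal W}_n(x)\coloneqq\int h_{\tilde s_{i_n(x)}}(y)\,d\widehat\mu_n(y)$, where $i_n(x)$ is the team index sitting at rank‑position $x$.

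First I would prove that the profile converges. By Glivenko–Cantelli, for $\mu^{\N\cup\{0\}}$‑almost every $\vec s$ one has $\sup_y|\widehat H_n(y)-H_\mu(y)|\to0$; hence $\widehat\mu_n\Rightarrow\mu$ and, at every continuity point $x$ of $H_\mu^{-1}$, the corresponding order statistic satisfies $\tilde s_{i_n(x)}\to H_\mu^{-1}(x)=:s^*$. Combining this with the boundedness and joint continuity of $h$ — more precisely, $h_{s_n}\to h_{s^*}$ uniformly on compacts when $s_n\to s^*$ (dominated convergence plus uniform continuity of $f$ on compacts), the tails being uniformly negligible by Glivenko–Cantelli — yields $\bar{\mathcal W}_n(x)\to\int h_{s^*}\,d\mu=\ell(s^*)=\ell(H_\mu^{-1}(x))$ at every continuity point $x$. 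Now $x\mapsto\frac1{2n}\sum_{k}h_{\tilde s_{i_n(x)}}(\tilde s_k)$ is non‑decreasing (because $i\mapsto\tilde s_i$ and $s\mapsto h_s$ are non‑decreasing) and differs from $\bar{\mathcal W}_n$ by $O(1/n)$; by Pólya's theorem and the standard facts on convergence of monotone functions, pointwise convergence on the co‑countable set of continuity points of $H_\mu^{-1}$ upgrades to convergence in the Skorokhod $J_1$ topology towards $\ell\circ H_\mu^{-1}$ — and to uniform convergence precisely when $\mathrm{supp}(\mu)$ is an interval, i.e.\ when $\ell\circ H_\mu^{-1}$ is continuous. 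The same argument applies to the piecewise‑linear interpolation of the profile, using that consecutive order statistics $\tilde s_{i+1}-\tilde s_i\to0$ away from the (at most countably many) jump positions induced by the gaps of $\mathrm{supp}(\mu)$.

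Next I would control the fluctuations $\bm E_i\coloneqq\tilde{\bm W}_n(i)-\E_{\vec s}[\tilde{\bm W}_n(i)]$. Conditioning on the tilt process of team $\tilde T_i$, its matches become independent Bernoulli variables with conditional parameters $F_{\tilde s_i}(\bm\xi_\bullet,\tilde s_\bullet)$; so, by the law of total variance exactly as in the proof of \cref{thm:LLN}, $\Var_{\vec s}(\bm E_i)\le\frac{2n-1}{4}+\Var\!\big(\sum_\bullet F_{\tilde s_i}(\bm\xi_\bullet,\tilde s_\bullet)\big)=o(n^2)$ under the weak‑mixing assumption \cref{eq:unif_weak_mix1} (the centered version $\sum_\bullet\tilde F_{\tilde s_i}(\bm\xi_\bullet,\tilde s_\bullet)$ is precisely a sum of the type treated in \cref{prop:clt_sum_of_the_g}). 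In particular $\bm E_{i_n(x)}/(2n)\xrightarrow{P}0$ for each fixed $x$, which together with the previous paragraph and the limit being deterministic already gives convergence of all finite‑dimensional distributions. The delicate point is the \emph{uniform} bound $\frac1{2n}\max_{i\in[2n-1]_0}|\bm E_i|\xrightarrow{P}0$, needed for $J_1$‑tightness: the oscillation of the interpolated process over any fixed window involves the maximum of $\bm E_i$ over $\Theta(n)$ indices, and distinct teams carry independent tilt processes so these $\bm E_i$ are essentially independent — no chaining shortcut is available. A Chebyshev union bound over the $2n$ teams loses a factor $n$; the natural remedy is a fourth‑moment bound $\E[\bm E_i^4]=O(n^2)$, obtained by splitting $\bm E_i$ into the conditionally‑independent Bernoulli part (Rosenthal/Hoeffding) and the centered mixing sum $\sum_\bullet\tilde F_{\tilde s_i}(\bm\xi_\bullet,\tilde s_\bullet)$, the latter being $O(n^2)$ in fourth moment once \cref{eq:unif_weak_mix1} is strengthened to a summable‑mixing condition such as $\sum_n\alpha_n<\infty$ from \cref{ass:CLT} (or one invokes an exponential inequality for mixing partial sums). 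Then $\P(\max_i|\bm E_i|>\varepsilon n)\le 2n\cdot O(n^2)/(\varepsilon n)^4=O(1/(\varepsilon^4 n))\to0$, completing the argument.

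I expect the last step to be the main obstacle. The pointwise — hence finite‑dimensional — convergence is essentially \cref{thm:LLN} applied rank by rank, together with Glivenko–Cantelli and the continuity of $f$; but promoting it to a genuine convergence of processes forces a uniform, high‑probability control of the win‑count fluctuations of \emph{all} $2n$ teams simultaneously, and this does not seem to follow from the Cesàro‑type weak‑mixing hypothesis alone: it appears to require at least the summable‑mixing condition $\sum_n\alpha_n<\infty$ of \cref{ass:CLT} together with moment or exponential inequalities for mixing partial sums. A secondary, purely topological point is that $\ell\circ H_\mu^{-1}$ jumps wherever $\mathrm{supp}(\mu)$ has a gap, so the convergence of càdlàg processes should be understood in the $J_1$ sense; it is uniform exactly when $\mu$ has interval support, as in all the examples of \cref{sect:param_and_examples}.
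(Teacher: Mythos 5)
First, a point of comparison: the statement you are proving is posed in the paper as an open \emph{conjecture} (supported only by simulations), so there is no proof of it in the paper to measure your argument against; your proposal has to stand on its own. Its skeleton is sensible and matches the paper's philosophy: the calendar-independent identity $\E_{\vec s}[\tilde{\bm W}_n(i)]=\sum_{k\neq i}h_{\tilde s_i}(\tilde s_k)$, the identification of the deterministic profile via Glivenko--Cantelli, continuity of $f$ and monotonicity of $s\mapsto h_s$, and the reduction of process convergence to the uniform fluctuation bound $\frac{1}{2n}\max_i|\bm E_i|\xrightarrow{P}0$ are all the right moves, and the finite-dimensional part is essentially \cref{thm:LLN} redone rank by rank (with the extra, manageable, care needed because the strength $\tilde s_{i_n(x)}$ of the rank-$x$ team varies with $n$, so the second-moment estimates behind \cref{prop:conv_for_bnd_cont_funct} must be made uniform in $s$, e.g.\ by sandwiching $F_s$ using its monotonicity in $s$ and the continuity of $\ell$).

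The genuine gap is the one you yourself flag, and it is not a removable technicality within your scheme: the conjecture assumes only \cref{ass:LLN}, i.e.\ the Ces\`aro weak-mixing condition \cref{eq:unif_weak_mix1}, which carries no rate and in particular does not even imply strong mixing, so neither the fourth-moment bound of \cref{lem:CLT_bound_fourth_moment} (whose proof uses \cref{eq:strongly_mix_plus}), nor any Rosenthal/exponential inequality for the tilt-sums $\sum_p\tilde F_{\tilde s_i}(\bm\xi^i_p,\cdot)$, is available. Your union bound over the $2n$ teams therefore cannot be run under the stated hypotheses; what your argument proves (modulo the uniformity-in-$s$ details above) is a weaker statement in which \cref{ass:LLN} is replaced by summability of the strong-mixing coefficients as in \cref{ass:CLT}. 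Incidentally, under that stronger assumption your quantitative claim $\E[\bm E_i^4]=O(n^2)$ is also not what the paper's lemma gives for blocks of length $\Theta(n)$ (it gives $O(n^2\sum_{j\le n}j\alpha_j)=o(n^3)$, which still suffices for the union bound), so the conclusion survives but the stated bound should be corrected. As it stands, then, the proposal does not prove the conjecture: the key missing idea is precisely how to obtain simultaneous control of the $2n$ fluctuation terms from the rate-free hypothesis \cref{eq:unif_weak_mix1} alone, which is presumably why the paper leaves the statement open. A secondary caveat, which you partially note, is that the limit $\ell\circ H_\mu^{-1}$ (with the paper's left-continuous generalized inverse) need not be c\`adl\`ag when $\mathrm{supp}(\mu)$ has gaps, so any complete proof must also fix the topology (e.g.\ $J_1$ with a right-continuous modification of the limit, or uniform convergence under interval support) before the convergence claim is even well posed.
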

	
	Note that the correlations between various teams in the league strongly depend on the choice of the calendar but we believe that this choice does not affect the limiting result in the conjecture above. We refer the reader to \cref{fig:sim_whole_league} for some simulations that support our conjecture.
	
	We also believe that the analysis of the local limit (as defined in \cite{MR4055194}) of the whole ranking should be an interesting but challenging question (and in this case we believe that the choice of the calendar will affect the limiting object). Here, with local limit we mean the limit of the ranking induced by the $k$ teams -- for every fixed $k\in\N$ -- in the neighborhood (w.r.t.\ the initial strengths) of a distinguished team (that can be selected uniformly at random or in a deterministic way, say for instance the strongest team).
	
	\item As mentioned above, we believe that it would be interesting to develop a more accurate and precise analysis of real data in order to correctly calibrate the parameters of our model collected at the beginning of \cref{sect:param_and_examples}.

\end{itemize}

\begin{figure}[htbp]
	\centering
	\includegraphics[scale=.4]{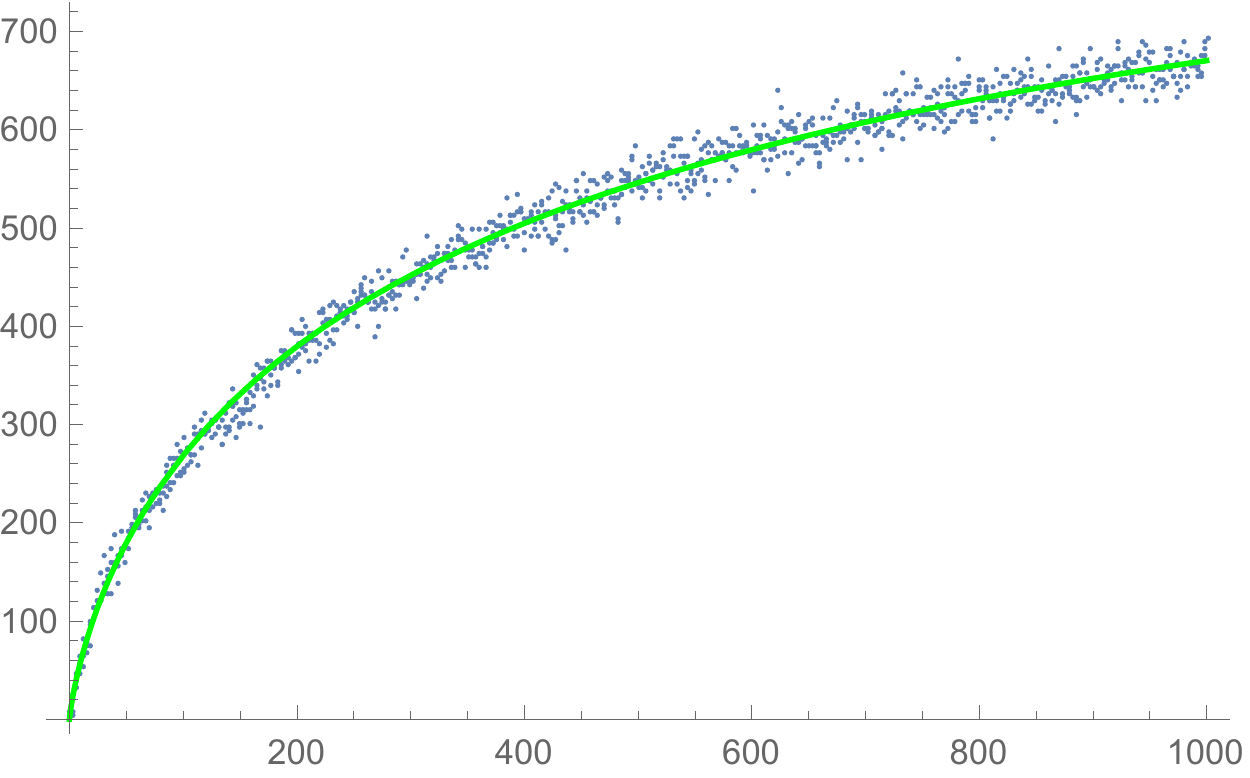}
	\hspace{0.1cm}
	\includegraphics[scale=.4]{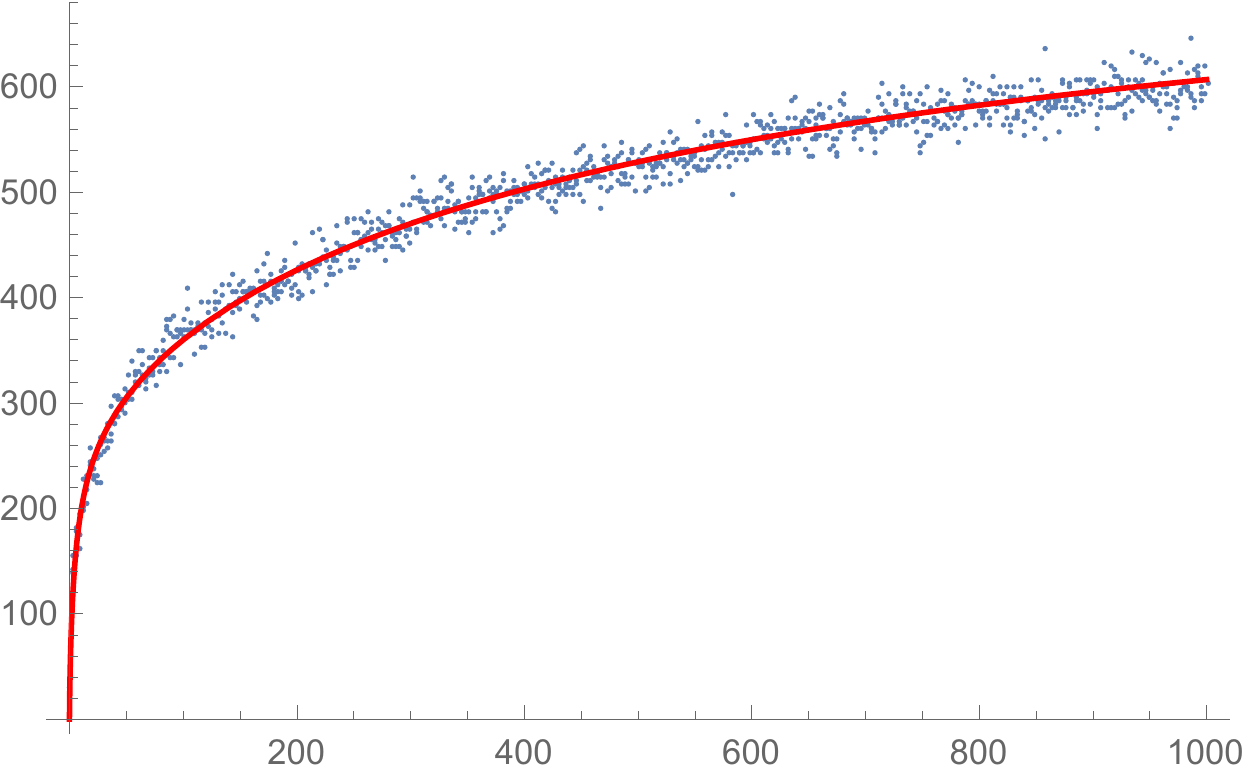}
	\hspace{0.1cm}
	\includegraphics[scale=.4]{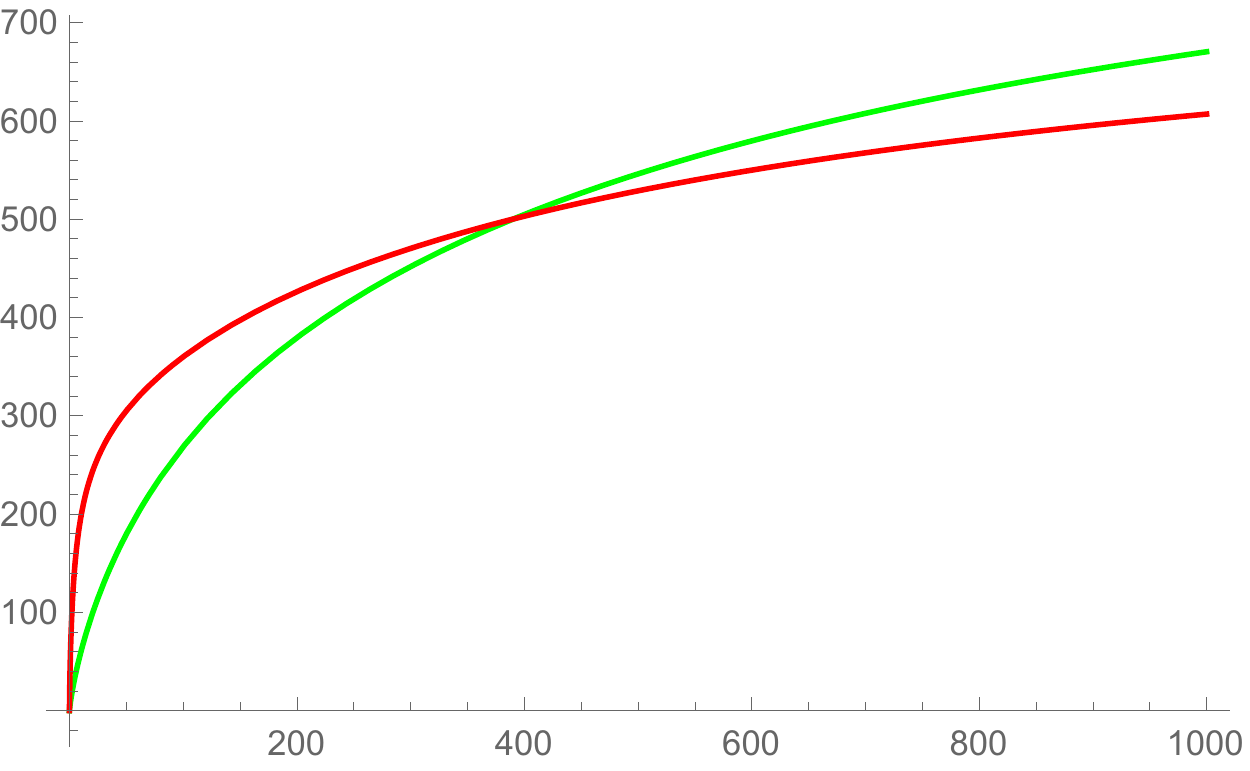}
	\caption{Two simulations of the diagrams of $(\tilde{\bm W}_n(i))_{i\in[999]_0}$ in the setting of \cref{exmp:league} with different choices of $a, b, p_a$ and $p_b$. The values $(\tilde{\bm W}_n(i))_{i\in[999]_0}$ are plotted in blue. The limiting functions $1000\cdot\ell(H_{\mu}^{-1}(x/1000))$ for $x\in[0,1000]$ are plotted in green and red respectively.  \textbf{Left:} In this simulation the parameters are $a=\frac{1}{2}, b=2, p_a=\frac 1 2, p_b=\frac 1 2$.
	\textbf{Middle:} In this simulation the parameters are $a=\frac{1}{10}, b=10, p_a=\frac{1}{10}, p_b=\frac{1}{10}$.
	\textbf{Right:} The two limiting functions are overlapped in the same diagram to highlight the different slopes. 
	\label{fig:sim_whole_league}}
\end{figure}

\section{Proof of the law of large numbers}\label{sect:LLN}

The proof of \cref{thm:LLN} follows from the following result using standard second moment arguments.
\begin{prop}\label{prop:first_mom}
We have that
\begin{equation}
	\E\left[\frac{\bm W_n(s)}{2n}\middle| (\bm s_i)_i\right]\xrightarrow[n\to\infty]{a.s.} \ell(s) ,
\qquad\text{and}\qquad
	\E\left[\left(\frac{\bm W_n(s)}{2n}\right)^2\middle| (\bm s_i)_i\right]\xrightarrow[n\to\infty]{a.s.} \ell(s)^2. 
\end{equation}
\end{prop}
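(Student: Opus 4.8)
The plan is to prove the two stated convergences separately; both rest on the conditional independence of the single‑match indicators, and only the second requires a genuine limit argument. Throughout, assume as we may that $T_{0}$ plays $T_j$ on day $j$, so $\bm W_n(s)=\sum_{j=1}^{2n-1}\mathds 1_{W_j}$, and recall that conditionally on all team strengths (the $\bm s_i$ together with all the processes $\bm\xi^i$) the indicators $\mathds 1_{W_j}$ are independent, that $\E[\mathds 1_{W_j}\mid(\bm s_i)_i,\bm\xi^0]=F_s(\bm\xi^0_j,\bm s_j)$, and that $\E[\mathds 1_{W_j}\mid(\bm s_i)_i]=h(\bm s_j)$, where $h(y):=\E[f(s\cdot\bm V,y\cdot\bm V')]=\E[F_s(\bm V,y)]$; here one uses that $\bm\xi^0_j$ and $\bm\xi^j_j$ are independent, both $\stackrel{d}{=}\nu$, and independent of $(\bm s_i)_i$, and that $\bm\xi^0$ is independent of $(\bm s_i)_i$. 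For the first moment this already gives $\E[\tfrac{\bm W_n(s)}{2n}\mid(\bm s_i)_i]=\tfrac1{2n}\sum_{j=1}^{2n-1}h(\bm s_j)$, and since $h$ is bounded and measurable and $(\bm s_j)_j$ is i.i.d.\ $\sim\mu$, the strong law of large numbers yields $\tfrac1{2n}\sum_{j=1}^{2n-1}h(\bm s_j)\xrightarrow{a.s.}\E[h(\bm U)]=\ell(s)$.

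\textbf{Second moment.} I would condition first on $(\bm s_i)_i$ and then on $\bm\xi^0$. Conditional independence of the $\mathds 1_{W_j}$ given $\big((\bm s_i)_i,\bm\xi^0\big)$ gives
\[
\E\big[\bm W_n(s)^2\,\big|\,(\bm s_i)_i,\bm\xi^0\big]=\Big(\sum_{j=1}^{2n-1}F_s(\bm\xi^0_j,\bm s_j)\Big)^{\!2}+\sum_{j=1}^{2n-1}F_s(\bm\xi^0_j,\bm s_j)\big(1-F_s(\bm\xi^0_j,\bm s_j)\big),
\]
so, writing $\E^{\bm\xi}$ for $\E[\,\cdot\mid(\bm s_i)_i]$ and noting that the second sum lies in $[0,2n-1]$,
\[
\E\Big[\big(\tfrac{\bm W_n(s)}{2n}\big)^{\!2}\,\Big|\,(\bm s_i)_i\Big]=\E^{\bm\xi}\Big[\big(\tfrac1{2n}\textstyle\sum_{j=1}^{2n-1}F_s(\bm\xi^0_j,\bm s_j)\big)^{\!2}\Big]+O(1/n).
\]
Decomposing the conditional second moment as the square of the conditional mean plus the conditional variance, and observing that the square of the conditional mean equals $\big(\tfrac1{2n}\sum_{j=1}^{2n-1}h(\bm s_j)\big)^{\!2}\xrightarrow{a.s.}\ell(s)^2$ by the first part, the proposition reduces to
\[
\Var^{\bm\xi}\Big(\tfrac1{2n}\textstyle\sum_{j=1}^{2n-1}F_s(\bm\xi^0_j,\bm s_j)\Big)\xrightarrow[n\to\infty]{a.s.}0 .
\]

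\textbf{The key estimate.} Expanding this conditional variance, its diagonal is $O(1/n)$ surely; for $j\neq k$, with $\tilde F_s(x,y):=F_s(x,y)-h(y)$ (so $\E^{\bm\xi}[\tilde F_s(\bm\xi^0_j,\bm s_j)]=0$) and the pairwise stationarity \cref{eq:stationarity}, the off‑diagonal conditional covariance equals $R^{(|j-k|)}(\bm s_{j\wedge k},\bm s_{j\vee k})$, where $R^{(\tau)}(y_1,y_2):=\E\big[\tilde F_s(\bm\xi_1,y_1)\,\tilde F_s(\bm\xi_{1+\tau},y_2)\big]$ satisfies $|R^{(\tau)}|\le1$. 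Bounding crudely, it then suffices to show
\[
\frac1{N^2}\sum_{\tau=1}^N\sum_{j=1}^N\big|R^{(\tau)}(\bm s_j,\bm s_{j+\tau})\big|\xrightarrow[N\to\infty]{a.s.}0 .
\]
The expectation of this double average is $\tfrac1N\sum_{\tau=1}^N r_\tau$ with $r_\tau:=\E_{\bm U,\bm U'}\big[|R^{(\tau)}(\bm U,\bm U')|\big]$; approximating $\tilde F_s(\cdot,y_1)$ and $\tilde F_s(\cdot,y_2)$ in $L^1(\nu)$ by simple $\sigma(\bm\xi_1)$‑measurable functions and invoking the weak‑mixing hypothesis \cref{eq:unif_weak_mix1} gives $\tfrac1m\sum_{\tau=1}^m|R^{(\tau)}(y_1,y_2)|\to0$ for every fixed $(y_1,y_2)$, hence $\tfrac1m\sum_{\tau=1}^m r_\tau\to0$ by bounded convergence. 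For the almost sure statement I would use that each summand $|R^{(\tau)}(\bm s_j,\bm s_{j+\tau})|$ is $[0,1]$‑valued and depends only on $(\bm s_j,\bm s_{j+\tau})$, so two summands are independent unless their index pairs meet; the dependency graph on the $N^2$ summands thus has maximal degree $O(N)$, whence $\Var\big(\tfrac1{N^2}\sum_{\tau,j\le N}|R^{(\tau)}(\bm s_j,\bm s_{j+\tau})|\big)=O(1/N)$. Chebyshev and Borel--Cantelli along the subsequence $N_k=k^2$, combined with the monotonicity in $N$ of the un‑normalised double sum to interpolate (using $N_{k+1}^2/N_k^2\to1$), upgrade the $L^1$‑bound to almost sure convergence along the full sequence; taking $N=2n$ concludes.

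\textbf{Main obstacle.} The delicate step is the last one: the weak‑mixing hypothesis \cref{eq:unif_weak_mix1} carries no quantitative rate, so the fluctuation term $\Var^{\bm\xi}\big(\tfrac1{2n}\sum_j F_s(\bm\xi^0_j,\bm s_j)\big)$ is a priori controlled only in $L^1$; it is precisely the combinatorics of the dependency graph -- degree $O(N)$ among $N^2$ summands, hence a summable‑along‑a‑subsequence variance -- that converts this into almost sure convergence. Note that the monotonicity of $f$ and the no‑draw condition are not used anywhere in this argument.
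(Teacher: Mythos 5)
Your proposal is correct, and for the first moment it coincides with the paper's argument (the conditional expectation is $\frac1{2n}\sum_j h(\bm s_j)$ and the strong law applies). For the second moment you take a genuinely different route. The paper also reduces to $\E\bigl[\bigl(\frac1{2n}\sum_j F_s(\bm \xi_j,\bm s_j)\bigr)^2\big|(\bm s_i)_i\bigr]$, but then proves a general statement valid for every bounded measurable $g$ (\cref{prop:conv_for_bnd_cont_funct}): first for indicators of rectangles, where the conditional expectation factorizes so that the quenched a.s.\ convergence needs only the strong law for the i.i.d.\ strengths plus a uniform-in-$\ell$ consequence of stationarity and weak mixing (\cref{lem:first_sec_mom_for_ret}); then for all Borel sets by a monotone class argument (\cref{lem:ind_fnct}); and finally for bounded measurable $g$ by a layer-cake/Fubini argument. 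You instead stay with the specific function $F_s$, split the conditional second moment into the squared conditional mean plus the conditional variance, and control the off-diagonal covariances $R^{(\tau)}(\bm s_j,\bm s_{j+\tau})$ directly: pointwise Ces\`aro decay of $|R^{(\tau)}(y_1,y_2)|$ from the weak-mixing hypothesis via simple-function approximation (valid, since the approximation error is controlled in $L^1(\nu)$ uniformly in $\tau$), an $L^1$ bound by bounded convergence, and an upgrade to a.s.\ convergence through the degree-$O(N)$ dependency graph, Chebyshev, Borel--Cantelli along $N_k=k^2$, and monotone interpolation. Both routes use the same probabilistic inputs (conditional independence given $\bm\xi^0$ and the strengths, i.i.d.\ strengths, pairwise stationarity, weak mixing), but they buy different things: your argument is quantitative in the fluctuations over the strength sequence and avoids the monotone-class and layer-cake machinery, whereas the paper's detour through \cref{prop:conv_for_bnd_cont_funct} yields a reusable statement for arbitrary bounded measurable $g$, which is invoked again in the proofs of \cref{prop:clt_sum_of_the_g} and \cref{thm:CLT} and which your proof does not supply. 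Two minor points, neither a gap: the indices $j+\tau$ in your double sum may exceed $2n-1$, which is harmless since the strengths form an infinite i.i.d.\ sequence and you only add nonnegative terms; and the Borel--Cantelli step should be run over countably many $\varepsilon$ and combined with $\frac1N\sum_{\tau\le N}r_\tau\to0$ along the full sequence, both routine.
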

The rest of this section is devoted to the proof of \cref{prop:first_mom}.

\medskip

We recall (see \cref{sect:litterature_sum_var}) that we assumed that, for every $j\in[2n-1]$, the team $T_{0}$ plays against the team $T_j$ the $j$-th day and we denoted by $W_{j}=W_j(s)$ the event that the team $T_{0}$ wins the match. In particular, $\bm W_n(s)=\sum_{j=1}^{2n-1}\mathds{1}_{W_j}$ and
\begin{equation}\label{eq:prob_win_match}
	\P\left(W_j\middle | \bm s_j, \bm \xi^{0}_j,\bm \xi^{j}_j\right)=f(s\cdot \bm \xi^{0}_j,\bm s_j \cdot \bm \xi^{j}_j).
\end{equation}

\begin{proof}[Proof of \cref{prop:first_mom}]
	We start with the computations for the first moment. From \cref{eq:prob_win_match} we have that
	\begin{equation}\label{eq:evfwryibfweonfpiwe}
		\E\left[\bm W_n(s)\middle| (\bm s_i)_i\right]=\sum_{j=1}^{2n-1}\P(W_j| (\bm s_i)_i)=\sum_{j=1}^{2n-1}\E\left[f\left(s\cdot \bm \xi^{0}_j,\bm s_j\cdot \bm \xi^j_j\right)\middle| \bm s_j\right].
	\end{equation}
	Since for all $j\in [2n-1]$, $\bm \xi^j_j$, $\bm \xi^{0}_j$ and $\bm s_j$ are independent, $\bm \xi^{0}_j\stackrel{d}{=}\bm V$, and $\bm \xi^j_j\stackrel{d}{=}\bm V'$, we have
	\begin{equation}
		\E\left[f\left(s\cdot \bm \xi^{0}_j,\bm s_j\cdot \bm \xi^j_j\right)\middle| \bm s_j\right]=G_s\left(\bm s_j\right),
	\end{equation}
	where $G_s(x)=\E\left[f\left(s\cdot \bm V,x\cdot \bm V'\right)\right]$.
	By the Law of large numbers, we can conclude that
	\begin{equation}\label{eq:evfwuitgwrefbwofnwryibfweonfpiwe}
		\E\left[\frac{\bm W_n(s)}{2n}\middle| (\bm s_i)_i\right]=\frac{1}{2n}\sum_{j=1}^{2n-1}G_s\left(\bm s_j\right)
		\xrightarrow[n\to\infty]{a.s.} \E\left[G_s\left(\bm U\right)\right]=\ell(s).
	\end{equation}

We now turn to the second moment. 
Since for all $i,j\in [2n-1]$ with $i\neq j$, conditioning on $(\bm s_i,\bm s_j, \bm \xi^{0}_i,\bm \xi^{i}_i,\bm \xi^{0}_j,\bm \xi^{j}_j)$ the events $W_i$ and $W_j$ are independent, we have that, using \cref{eq:prob_win_match},
\begin{multline}
\E\left[\bm W_n(s)^2\middle| (\bm s_i)_i\right]=\E\left[\bm W_n(s)\middle| (\bm s_i)_i\right]+\sum_{\substack{\ell,j=1\\ \ell\neq j}}^{2n-1}\P(W_\ell\cap W_j | \bm s_\ell,\bm s_j)\\
=\E\left[\bm W_n(s)\middle| (\bm s_i)_i\right]+\sum_{\substack{\ell,j=1\\ \ell\neq j}}^{2n-1}\E\left[f\left(s\cdot \bm \xi^{0}_\ell,\bm s_\ell\cdot \bm \xi^\ell_\ell\right)f\left(s\cdot \bm \xi^{0}_j,\bm s_j\cdot \bm \xi^j_j\right)\middle| \bm s_\ell,\bm s_j\right].
\end{multline}
For all $\ell,j\in [2n-1]$ with $\ell\neq j$, $\bm s_\ell$, $\bm s_j$, $\bm \xi^\ell_\ell$ and $\bm \xi^j_j$ are mutually independent and independent of $(\bm \xi^{0}_\ell,\bm \xi^{0}_j)$. In addition, $(\bm \xi^{0}_\ell,\bm \xi^{0}_j)\stackrel{d}{=}(\bm \xi_\ell,\bm \xi_j)$ and $\bm \xi^\ell_\ell\stackrel{d}{=}\bm \xi^j_j\stackrel{d}{=}\bm V'$. Thus, we have that
\begin{equation}
\E\left[f\left(s\cdot \bm \xi^{0}_\ell,\bm s_\ell\cdot \bm \xi^\ell_\ell\right)f\left(s\cdot \bm \xi^{0}_j,\bm s_j\cdot \bm \xi^j_j\right)\middle| \bm s_\ell,\bm s_j\right]
=\E\left[F_s\left(\bm \xi_\ell,\bm s_\ell\right)F_s\left(\bm \xi_j,\bm s_j\right)\middle| \bm s_\ell,\bm s_j\right],
\end{equation}
where we recall that  $F_s(x,y)=\E\left[f\left(s\cdot x,y\cdot \bm V'\right)\right]$.
Simple consequences of the computations done for the first moment are that 
$$\frac{\E\left[\bm W_n(s)\middle| (\bm s_i)_i\right]}{n^2}\xrightarrow[n\to\infty]{a.s.} 0\quad\text{and}\quad\frac{\E\left[\sum_{j=1}^{2n-1} F_s^2 (\bm \xi_j, \bm s_j)\middle| (\bm s_i)_i\right]}{n^2}\xrightarrow[n\to\infty]{a.s.} 0.$$ Therefore, we can write
\begin{multline}\label{eq:fbnwiruefbeownfw}
\E\left[\left(\frac{\bm W_n(s)}{2n}\right)^2\middle| (\bm s_i)_i\right]=
\E\left[\frac{1}{4n^2}\sum_{\ell,j=1}^{2n-1}F_s\left(\bm \xi_\ell,\bm s_\ell\right) F_s\left(\bm \xi_j,\bm s_j\right)\middle| (\bm s_i)_i\right]+\bm o(1)\\
= 
\E\left[\left(\frac{1}{2n}\sum_{j=1}^{2n-1} F_s\left(\bm \xi_j,\bm s_j\right)\right)^2\middle| (\bm s_i)_i\right]+\bm o(1),
\end{multline}
where $\bm o(1)$ denotes a sequence of random variables that a.s.\ converges to zero.
We now need the following result, whose proof is postponed at the end of this section.
\begin{prop} \label{prop:conv_for_bnd_cont_funct}
	For all bounded, measurable functions $g:\R^2_+\to\R_+$, we have that
	\begin{equation}\label{eq:second_mom_funct}
		\E\left[\left(\frac{1}{2n}\sum_{j=1}^{2n-1}g\left(\bm \xi_j,\bm s_j\right)\right)^2\middle| (\bm s_i)_i\right]\xrightarrow[n\to\infty]{a.s.}\E\left[g\left(\bm V,\bm U\right)\right]^2.
	\end{equation}
\end{prop}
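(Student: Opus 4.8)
The plan is to expand the square, condition on $(\bm s_i)_i$, and split the result into a ``law of large numbers part'' (handled by the classical SLLN for the i.i.d.\ sequence $(\bm s_i)_i$) and a ``covariance part'' (killed by the weak-mixing hypothesis~\eqref{eq:unif_weak_mix1}). Write $M\coloneqq\sup|g|<\infty$, $N\coloneqq 2n-1$, $\bm Y_N\coloneqq\frac1N\sum_{j=1}^{N}g(\bm\xi_j,\bm s_j)$ and $L\coloneqq\E[g(\bm V,\bm U)]$; since $(2n-1)^2/(2n)^2\to 1$ it suffices to prove $\E[\bm Y_N^2\mid(\bm s_i)_i]\xrightarrow{a.s.}L^2$. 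Because $\bm\xi$ is independent of $(\bm s_i)_i$ and, by~\eqref{eq:stationarity0}--\eqref{eq:stationarity}, $(\bm\xi_\ell,\bm\xi_j)\stackrel{d}{=}(\bm\xi_1,\bm\xi_{1+(j-\ell)})$ for $\ell<j$, one has $\E[g(\bm\xi_\ell,\bm s_\ell)g(\bm\xi_j,\bm s_j)\mid(\bm s_i)_i]=\Phi_{j-\ell}(\bm s_\ell,\bm s_j)$, where $\Phi_k(a,b)\coloneqq\E[g(\bm\xi_1,a)g(\bm\xi_{1+k},b)]$ for $k\ge1$ and $\Phi_0(a,b)\coloneqq\E[g(\bm\xi_1,a)g(\bm\xi_1,b)]\in[0,M^2]$. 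Setting $\bar g(a)\coloneqq\E[g(\bm V,a)]$ and $\gamma_k(a,b)\coloneqq\Phi_k(a,b)-\bar g(a)\bar g(b)=\Cov\big(g(\bm\xi_1,a),g(\bm\xi_{1+k},b)\big)$, so that $|\gamma_k|\le M^2$, a short computation (the $\ell=j$ diagonal and the term $\frac1{N^2}\sum_j\bar g(\bm s_j)^2$ being each bounded by $M^2/N$) gives
\begin{equation*}
	\E\big[\bm Y_N^2\mid(\bm s_i)_i\big]=\Big(\tfrac1N\sum_{j=1}^{N}\bar g(\bm s_j)\Big)^2+\frac{2}{N^2}\sum_{1\le\ell<j\le N}\gamma_{j-\ell}(\bm s_\ell,\bm s_j)+O(1/N).
\end{equation*}
By the SLLN the first term tends a.s.\ to $\big(\E[\bar g(\bm U)]\big)^2=L^2$ ($\bar g$ is bounded and measurable, and $\E[\bar g(\bm U)]=L$), so it remains to prove that
\begin{equation*}
	\bm D_N\coloneqq\frac1{N^2}\sum_{1\le\ell<j\le N}\big|\gamma_{j-\ell}(\bm s_\ell,\bm s_j)\big|\xrightarrow{a.s.}0.
\end{equation*}

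The second step is to estimate $\E[\bm D_N]$. A standard approximation argument — any bounded measurable function is a uniform limit of simple functions, and the covariance of two simple functions decomposes into finitely many covariances of indicators of the form $\{\bm\xi_1\in E\}$ and $\{\bm\xi_{1+k}\in F\}$ — extends~\eqref{eq:unif_weak_mix1} (up to a harmless index shift and dropping one term) from indicators of events to bounded measurable functions; applied to $g(\cdot,a)$ and $g(\cdot,b)$ it yields, for every fixed $a,b\in\R_+$, that $\frac1m\sum_{k=1}^{m}|\gamma_k(a,b)|\to 0$. Since $|\gamma_k|\le M^2$ and $\mu$ is a probability measure, bounded convergence gives $\beta^{(m)}\coloneqq\frac1m\sum_{k=1}^{m}\beta_k\to 0$, where $\beta_k\coloneqq\iint|\gamma_k(a,b)|\,d\mu(a)\,d\mu(b)$. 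As $\bm s_\ell,\bm s_j$ are i.i.d.\ $\sim\mu$ for $\ell\ne j$, this gives $\E[\bm D_N]=\frac1{N^2}\sum_{k=1}^{N-1}(N-k)\beta_k\le\beta^{(N-1)}\to 0$.

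The final step upgrades this to almost sure convergence by a concentration estimate. The quantity $\bm D_N$ is a function of the independent random variables $\bm s_1,\dots,\bm s_N$, and changing a single $\bm s_i$ affects at most $N-1$ of the $\binom{N}{2}$ summands, each lying in $[0,M^2]$, hence changes $\bm D_N$ by at most $M^2/N$. By the bounded differences (McDiarmid) inequality, $\P\big(|\bm D_N-\E[\bm D_N]|\ge\varepsilon\big)\le 2\exp\!\big(-2\varepsilon^2 N/M^4\big)$ for every $\varepsilon>0$, which is summable in $N$, so Borel--Cantelli gives $\bm D_N-\E[\bm D_N]\xrightarrow{a.s.}0$; together with $\E[\bm D_N]\to0$ this yields $\bm D_N\xrightarrow{a.s.}0$ and completes the proof. (One can avoid the bounded differences inequality: Efron--Stein gives $\Var(\bm D_N)\le M^4/(2N)$, so Chebyshev and Borel--Cantelli give $\bm D_{k^2}\to 0$ a.s.\ along $N=k^2$, and since $N\mapsto\sum_{\ell<j\le N}|\gamma_{j-\ell}(\bm s_\ell,\bm s_j)|$ is nondecreasing while $N\mapsto N^{-2}$ is decreasing, $\bm D_N\le\tfrac{(k+1)^4}{k^4}\bm D_{(k+1)^2}$ for $k^2\le N<(k+1)^2$, whence $\bm D_N\to0$ a.s.)

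The main obstacle is this last step: the weak-mixing hypothesis supplies directly only the $L^1$-type bound $\E[\bm D_N]\to0$, and turning it into the required almost sure statement is delicate precisely because — as the authors emphasize — once $(\bm s_i)_i$ is fixed the summands $g(\bm\xi_j,\bm s_j)$ are no longer stationary, so Birkhoff's theorem is unavailable. A secondary technical point is making rigorous the extension of~\eqref{eq:unif_weak_mix1} from indicator functions of events to general bounded measurable functions.
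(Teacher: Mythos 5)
Your proof is correct, and it takes a genuinely different route from the paper's. The paper first proves the statement for indicators of rectangles (\cref{lem:first_sec_mom_for_ret}), where the product structure $\mathds{1}_{A\times B}(\bm\xi_j,\bm s_j)=\mathds{1}_{A}(\bm\xi_j)\mathds{1}_{B}(\bm s_j)$ separates the two sources of randomness: the mixing estimate is then deterministic and uniform in $\ell$, and the almost sure statement comes solely from the strong law applied to $\frac{1}{2n}\sum_j\mathds{1}_{B}(\bm s_j)$; the general case is reached abstractly, first by a monotone class argument for arbitrary Borel sets (\cref{lem:ind_fnct}, whose delicate point is a uniform-in-$n$ bound allowing an exchange of limits) and then by the layer-cake representation $g(x,y)=\int_0^{\|g\|_\infty}\mathds{1}_{\{z\le g(x,y)\}}dz$ plus dominated convergence. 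You instead keep a general bounded $g$ throughout: you split the conditional second moment into $\bigl(\frac1N\sum_j\bar g(\bm s_j)\bigr)^2$ (handled by the SLLN) plus a covariance remainder, use the weak-mixing hypothesis \eqref{eq:unif_weak_mix1} -- legitimately extended from events to bounded measurable functions by uniform simple-function approximation, since the sets $B_\ell$ there may be taken constant in $\ell$ -- to get only $\E[\bm D_N]\to 0$, and then upgrade to almost sure convergence by a concentration argument (McDiarmid or Efron--Stein plus Borel--Cantelli), exploiting that $\bm D_N$ has bounded differences of order $1/N$ in the independent variables $\bm s_1,\dots,\bm s_N$. The trade-off: your argument bypasses the monotone-class and layer-cake extensions entirely and is more quantitative (it yields exponential concentration of the remainder), at the price of importing a concentration inequality, which the paper's proof of this proposition does not need; conversely, the paper's proof uses only the SLLN and the mixing bound, but pays with the two-step abstract extension. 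The only points you should state explicitly are routine: joint measurability of $(a,b)\mapsto\gamma_k(a,b)$ (from Fubini, since $g$ is bounded and jointly measurable), which is needed both to define $\beta_k$ and to make $\bm D_N$ a measurable function of $(\bm s_1,\dots,\bm s_N)$, and the freezing identity $\E\left[g(\bm\xi_\ell,\bm s_\ell)g(\bm\xi_j,\bm s_j)\middle|(\bm s_i)_i\right]=\Phi_{j-\ell}(\bm s_\ell,\bm s_j)$, which uses the independence of $\bm\xi$ from $(\bm s_i)_i$ together with the stationarity assumptions \eqref{eq:stationarity0} and \eqref{eq:stationarity}.
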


From \cref{eq:fbnwiruefbeownfw} and the proposition above, we conclude that
\begin{equation}
\E\left[\left(\frac{\bm W_n(s)}{2n}\right)^2\middle| (\bm s_i)_i\right]\xrightarrow[n\to\infty]{a.s.}\E\left[F_s\left(\bm V,\bm U\right)\right]^2=\ell(s)^2.
\end{equation}
This concludes the proof of \cref{prop:first_mom}.
\end{proof}

It remains to prove \cref{prop:conv_for_bnd_cont_funct}. We start with the following preliminary result.

\begin{lem} \label{lem:first_sec_mom_for_ret}
For every quadruple $(A,A',B,B')$ of Borel subsets of $\R_+$, we have that
\begin{equation}
\E\left[\frac{1}{4n^2}\sum_{\ell,j=1}^{2n-1} \mathds{1}_{A \times B}\left(\bm \xi_\ell,\bm s_\ell\right) \mathds{1}_{A' \times B'}\left(\bm \xi_j,\bm s_j\right)\middle| (\bm s_i)_i\right]\xrightarrow[n\to\infty]{a.s.} \E\left[\mathds{1}_{A \times B}(\bm V,\bm U)\right]\E\left[\mathds{1}_{A' \times B'}(\bm V,\bm U)\right]\label{eq:second_mom_ind_ret}.
\end{equation}
\end{lem}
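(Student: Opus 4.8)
The plan is to exploit the independence between the tilting process $\bm\xi$ and the strength sequence $(\bm s_i)_i$ to evaluate the conditional expectation exactly, and then to split the resulting double sum into three pieces: a diagonal part, a ``mixing error'' part, and a principal part governed by the strong law of large numbers.

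\textbf{Step 1 (computing the conditional expectation).} Since $\mathds{1}_{A\times B}(\bm\xi_\ell,\bm s_\ell)=\mathds{1}_A(\bm\xi_\ell)\mathds{1}_B(\bm s_\ell)$ and $\bm\xi$ is independent of $(\bm s_i)_i$, one gets
\[
\E\!\left[\mathds{1}_{A\times B}(\bm\xi_\ell,\bm s_\ell)\,\mathds{1}_{A'\times B'}(\bm\xi_j,\bm s_j)\,\middle|\,(\bm s_i)_i\right]=\mathds{1}_B(\bm s_\ell)\,\mathds{1}_{B'}(\bm s_j)\,\P(\bm\xi_\ell\in A,\bm\xi_j\in A'),
\]
so the left-hand side of \eqref{eq:second_mom_ind_ret} equals $S_n:=\frac{1}{4n^2}\sum_{\ell,j=1}^{2n-1}\mathds{1}_B(\bm s_\ell)\mathds{1}_{B'}(\bm s_j)\,\P(\bm\xi_\ell\in A,\bm\xi_j\in A')$. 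The diagonal terms $\ell=j$ contribute at most $(2n-1)/(4n^2)\to0$ and can be discarded.

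\textbf{Step 2 (removing the dependence via weak mixing).} Next I would replace $\P(\bm\xi_\ell\in A,\bm\xi_j\in A')$ by $\nu(A)\nu(A')$, which by \eqref{eq:stationarity0} is exactly $\P(\bm\xi_\ell\in A)\P(\bm\xi_j\in A')$. Using $\mathds{1}_B,\mathds{1}_{B'}\le 1$, the incurred error is bounded by the \emph{deterministic} quantity $\frac{1}{4n^2}\sum_{\ell\neq j}\big|\P(\bm\xi_\ell\in A,\bm\xi_j\in A')-\nu(A)\nu(A')\big|$. By the stationarity assumption \eqref{eq:stationarity}, for $\ell<j$ this difference equals $\gamma_{j-\ell}$ with $\gamma_k:=\big|\P(\bm\xi_1\in A,\bm\xi_{1+k}\in A')-\nu(A)\nu(A')\big|$, and for $\ell>j$ it equals $\gamma'_{\ell-j}$, where $\gamma'_k$ is defined analogously with $A,A'$ interchanged. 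Applying the weak-mixing assumption \eqref{eq:unif_weak_mix1} with the set $\{\bm\xi_1\in A\}$ and the sets $B_k=\{\bm\xi_k\in A'\}$ (and once more with $A$ and $A'$ swapped) gives $\frac1m\sum_{k=1}^m\gamma_k\to0$ and $\frac1m\sum_{k=1}^m\gamma'_k\to0$, hence $\sum_{k=1}^m(\gamma_k+\gamma'_k)=o(m)$. Since there are $2n-1-k$ pairs $(\ell,j)$ with $j-\ell=k$ (and as many with $\ell-j=k$), the error is at most $\frac{2n-1}{4n^2}\sum_{k=1}^{2n-2}(\gamma_k+\gamma'_k)=\frac{2n-1}{4n^2}\,o(n)\to0$.

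\textbf{Step 3 (the principal term).} It remains to treat $\frac{\nu(A)\nu(A')}{4n^2}\sum_{\ell\neq j}\mathds{1}_B(\bm s_\ell)\mathds{1}_{B'}(\bm s_j)$, which equals
\[
\nu(A)\nu(A')\left[\Big(\tfrac{1}{2n}\textstyle\sum_{\ell=1}^{2n-1}\mathds{1}_B(\bm s_\ell)\Big)\Big(\tfrac{1}{2n}\textstyle\sum_{j=1}^{2n-1}\mathds{1}_{B'}(\bm s_j)\Big)-\tfrac{1}{4n^2}\textstyle\sum_{\ell=1}^{2n-1}\mathds{1}_{B\cap B'}(\bm s_\ell)\right];
\]
the last term is $O(1/n)$, and by the strong law of large numbers (applied to the i.i.d.\ Bernoulli variables $\mathds{1}_B(\bm s_\ell)$ and $\mathds{1}_{B'}(\bm s_\ell)$) we have $\frac{1}{2n}\sum_{\ell=1}^{2n-1}\mathds{1}_B(\bm s_\ell)\xrightarrow{a.s.}\mu(B)$ and likewise for $B'$. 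Combining Steps 1--3, $S_n\xrightarrow{a.s.}\nu(A)\nu(A')\mu(B)\mu(B')=\E[\mathds{1}_{A\times B}(\bm V,\bm U)]\,\E[\mathds{1}_{A'\times B'}(\bm V,\bm U)]$, as claimed. The only delicate point is Step 2: extracting the Cesàro-type decay $\sum_{k\le m}(\gamma_k+\gamma'_k)=o(m)$ from the weak-mixing hypothesis \eqref{eq:unif_weak_mix1} and converting it, through the gap-counting, into an $o(n^2)$ bound on the full off-diagonal error; everything else is routine bookkeeping together with the classical SLLN.
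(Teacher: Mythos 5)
Your proof is correct and follows essentially the same route as the paper's: factorize the conditional expectation using the independence of $\bm\xi$ from $(\bm s_i)_i$, use stationarity together with the weak-mixing hypothesis \eqref{eq:unif_weak_mix1} to replace $\P(\bm\xi_\ell\in A,\bm\xi_j\in A')$ by $\nu(A)\nu(A')$ at a cost that vanishes in Ces\`aro average, and invoke the strong law of large numbers for the i.i.d.\ strengths. The only difference is cosmetic bookkeeping: the paper establishes the mixing estimate uniformly in $\ell$ for the inner sum, whereas you aggregate the whole off-diagonal error by counting pairs at each gap $k=|j-\ell|$; both yield the same $o(1)$ bound.
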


\begin{proof}
Note that since the process $\bm \xi$ is independent of $(\bm s_i)_i$,
\begin{multline}\label{eq:rewriting_the_expression}
\E\left[\frac{1}{4n^2}\sum_{\ell,j=1}^{2n-1} \mathds{1}_{A \times B}\left(\bm \xi_\ell,\bm s_\ell\right) \mathds{1}_{A' \times B'}\left(\bm \xi_j,\bm s_j\right)\middle| (\bm s_i)_i\right]\\
=\frac{1}{2n}\sum_{\ell=1}^{2n-1}\mathds{1}_{B}(\bm s_\ell) \cdot \frac{1}{2n}\sum_{j=1}^{2n-1} \P\left(\bm \xi_\ell\in A,\bm \xi_j\in A'\right)\mathds{1}_{B'}(\bm s_j).
\end{multline} 
For all $\ell\in[2n-1]$, we can write
\begin{align}\label{eq:split_sum_prob}
\frac{1}{2n}\sum_{j=1}^{2n-1} \P\left(\bm \xi_\ell\in A,\bm \xi_j\in A'\right)&\mathds{1}_{B'}(\bm s_j)\\
=\P\left(\bm V \in A\right)&\P\left(\bm V \in A'\right)\frac{1}{2n}\sum_{j=1}^{2n-1} \mathds{1}_{B'}(\bm s_j)\\
&+\frac{1}{2n}\sum_{j=1}^{2n-1} \left(\P\left(\bm \xi_\ell\in A,\bm \xi_j\in A'\right)-\P\left(\bm V \in A\right)\P\left(\bm V \in A'\right)\right)\mathds{1}_{B'}(\bm s_j).
\end{align}
First, from the Law of large numbers we have that
\begin{equation}\label{eq:first_lim_real}
\frac{1}{2n}\sum_{j=1}^{2n-1}\mathds{1}_{B'}(\bm s_j)\xrightarrow[n\to\infty]{a.s.}\mu(B').
\end{equation}
Secondly, we show that the second sum in the right-hand side of \cref{eq:split_sum_prob} is negligible. We estimate
\begin{multline}
\left|\frac{1}{2n}\sum_{j=1}^{2n-1} \left(\P\left(\bm \xi_\ell\in A,\bm \xi_j\in A'\right)-\P\left(\bm V \in A\right)\P\left(\bm V \in A'\right)\right)\mathds{1}_{B'}(\bm s_j)\right|\\
\leq
\frac{1}{2n}\sum_{j=1}^{2n-1} \left|\P\left(\bm \xi_\ell\in A,\bm \xi_j\in A'\right)-\P\left(\bm V\in A\right)\P\left(\bm V \in A'\right)\right|\\
=
\frac{1}{2n}\sum_{j=1}^{\ell-1} \left|\P\left(\bm \xi_\ell\in A,\bm \xi_j\in A'\right)-\P\left(\bm V\in A\right)\P\left(\bm V \in A'\right)\right|\\
+\frac{1}{2n}\sum_{j=\ell}^{2n-1} \left|\P\left(\bm \xi_\ell\in A,\bm \xi_j\in A'\right)-\P\left(\bm V\in A\right)\P\left(\bm V \in A'\right)\right|
\end{multline}
Using the stationarity assumption in \cref{eq:stationarity}, the right-hand side of the equation above can be rewritten as follows
\begin{multline}
\frac{1}{2n}\sum_{j=2}^{\ell} \left|\P\left(\bm \xi_j\in A,\bm \xi_1\in A'\right)-\P\left(\bm V\in A\right)\P\left(\bm V \in A'\right)\right|\\
+\frac{1}{2n}\sum_{j=1}^{2n-\ell} \left|\P\left(\bm \xi_1\in A,\bm \xi_j\in A'\right)-\P\left(\bm V\in A\right)\P\left(\bm V \in A'\right)\right|.
\end{multline}
Therefore, we obtain that
\begin{multline}
\left|\frac{1}{2n}\sum_{j=1}^{2n-1} \left(\P\left(\bm \xi_\ell\in A,\bm \xi_j\in A'\right)-\P\left(\bm V \in A\right)\P\left(\bm V \in A'\right)\right)\mathds{1}_{B'}(\bm s_j)\right|\\
\leq
\frac{1}{2n}\sum_{j=1}^{2n} \left|\P\left(\bm \xi_1\in A',\bm \xi_j\in A\right)-\P\left(\bm V\in A\right)\P\left(\bm V \in A'\right)\right|\\
+\frac{1}{2n}\sum_{j=1}^{2n} \left|\P\left(\bm \xi_1\in A,\bm \xi_j\in A'\right)-\P\left(\bm V\in A\right)\P\left(\bm V \in A'\right)\right|.
\end{multline}
The upper bound above is independent of $\ell$ and tends to zero because the process $\bm \xi$ is weakly-mixing (see \cref{eq:unif_weak_mix1}); we can thus deduce from \cref{eq:split_sum_prob,eq:first_lim_real} that, uniformly for all $\ell\in[2n-1]$,
\begin{equation}\label{eq:ifbuewbfoewnfoiewnfew}
\frac{1}{2n}\sum_{j=1}^{2n-1} \P\left(\bm \xi_\ell\in A,\bm \xi_j\in A'\right)\mathds{1}_{B'}(\bm s_j)
\xrightarrow[n\to\infty]{a.s.}
\P\left(\bm V\in A\right)\P\left(\bm V \in A'\right)\mu(B').
\end{equation}
Hence, from \cref{eq:rewriting_the_expression,eq:ifbuewbfoewnfoiewnfew} and the Law of large numbers, we can conclude that
\begin{multline}
\E\left[\frac{1}{4n^2}\sum_{\ell,j=1}^{2n-1} \mathds{1}_{A \times B}\left(\bm \xi_\ell,\bm s_\ell\right) \mathds{1}_{A' \times B'}\left(\bm \xi_j,\bm s_j\right)\middle| (\bm s_i)_i\right]\\
\xrightarrow[n\to\infty]{a.s.} \P\left(\bm V\in A\right)\mu(B)\P\left(\bm V \in A'\right)\mu(B')
=\E\left[\mathds{1}_{A \times B}(\bm V,\bm U)\right]\E\left[\mathds{1}_{A' \times B'}(\bm V,\bm U)\right].\qedhere
\end{multline}
\end{proof}

We now show that we can extend the result of \cref{lem:first_sec_mom_for_ret} to all Borel subsets of $\R_+^2$, denoted by  $\mathcal{B}(\R_+^2)$. 
We also denote by $\mathcal{R}(\R_+^2)$ the sets of rectangles $A\times B$ of $\R_+^2$ with $A,B\in\mathcal{B}(\R_+)$.

We recall that we denote by $\sigma\left(\mathcal A\right)$ and $\lambda\left(\mathcal A\right)$ the sigma algebra and  the monotone class generated by a collection of sets $\mathcal A$, respectively.

\begin{lem}\label{lem:ind_fnct}
For all $C,C'\in\mathcal{B}(\R_+^2)$, we have that 
\begin{equation}
\E\left[\frac{1}{4n^2}\sum_{\ell,j=1}^{2n-1}\mathds{1}_{C}\left(\bm \xi_\ell,\bm s_\ell\right)\mathds{1}_{C'}\left(\bm \xi_j,\bm s_j\right)\middle| ( \bm s_i)_i\right]\xrightarrow[n\to\infty]{a.s.} \E\left[\mathds{1}_{C}(\bm V,\bm U)\right]\E\left[\mathds{1}_{C'}(\bm V,\bm U)\right]\label{eq:second_mom_ind_bor}.
\end{equation}
\end{lem}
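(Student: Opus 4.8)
The plan is to bootstrap \cref{lem:first_sec_mom_for_ret} from rectangles to all Borel sets via two successive applications of the monotone class theorem (equivalently, Dynkin's $\pi$-$\lambda$ lemma), one for the set $C$ and one for the set $C'$. Throughout I abbreviate
\[
\Phi_n^{C,C'}\coloneqq\E\left[\frac{1}{4n^2}\sum_{\ell,j=1}^{2n-1}\mathds 1_{C}\left(\bm \xi_\ell,\bm s_\ell\right)\mathds 1_{C'}\left(\bm \xi_j,\bm s_j\right)\,\middle|\,(\bm s_i)_i\right]
\]
and $L_{C,C'}\coloneqq\E[\mathds 1_C(\bm V,\bm U)]\,\E[\mathds 1_{C'}(\bm V,\bm U)]$, and I record three elementary facts: $\Phi_n^{C,C'}$ is a deterministic measurable function of $(\bm s_i)_i$ since $\bm \xi$ is independent of $(\bm s_i)_i$; one has $\Phi_n^{C,C'}=\Phi_n^{C',C}$ and $L_{C,C'}=L_{C',C}$ (relabel $\ell\leftrightarrow j$); and $C\mapsto\E[\mathds 1_C(\bm V,\bm U)]$ is the product probability measure $\nu\otimes\mu$, while $\mathcal R(\R_+^2)$ is a $\pi$-system with $\sigma(\mathcal R(\R_+^2))=\mathcal B(\R_+^2)$.

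First I would fix a rectangle $C'\in\mathcal R(\R_+^2)$ and let $\mathcal M_{C'}$ be the collection of $C\in\mathcal B(\R_+^2)$ for which $\Phi_n^{C,C'}\xrightarrow{a.s.}L_{C,C'}$. By \cref{lem:first_sec_mom_for_ret} we have $\mathcal R(\R_+^2)\subseteq\mathcal M_{C'}$, so it suffices to show that $\mathcal M_{C'}$ is a $\lambda$-system. Stability under complementation is immediate: since $\R_+^2\in\mathcal R(\R_+^2)\subseteq\mathcal M_{C'}$, $\mathds 1_{C^c}=\mathds 1_{\R_+^2}-\mathds 1_C$, and $\nu\otimes\mu$ is additive, one gets $\Phi_n^{C^c,C'}=\Phi_n^{\R_+^2,C'}-\Phi_n^{C,C'}\to L_{\R_+^2,C'}-L_{C,C'}=L_{C^c,C'}$. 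For stability under increasing unions, take $C_k\uparrow C$ with $C_k\in\mathcal M_{C'}$; from $\mathds 1_C-\mathds 1_{C_k}=\mathds 1_{C\setminus C_k}$ and $0\le\mathds 1_{C'}\le 1$ one obtains the deterministic estimate
\[
0\le\Phi_n^{C,C'}-\Phi_n^{C_k,C'}=\Phi_n^{C\setminus C_k,\,C'}\le\Phi_n^{C\setminus C_k,\,\R_+^2}\le\frac{1}{2n}\sum_{\ell=1}^{2n-1}\psi_{C\setminus C_k}(\bm s_\ell),\qquad\psi_D(y)\coloneqq\P\bigl((\bm V,y)\in D\bigr),
\]
and by the classical law of large numbers $\frac{1}{2n}\sum_{\ell=1}^{2n-1}\psi_{C\setminus C_k}(\bm s_\ell)\xrightarrow{a.s.}\E[\psi_{C\setminus C_k}(\bm U)]=(\nu\otimes\mu)(C\setminus C_k)$ for each fixed $k$. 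Hence, outside the $\mu^{\N}$-null set obtained as the countable union of the exceptional sets attached to the $C_k\in\mathcal M_{C'}$ and to these law-of-large-numbers statements, one has for every $k$
\[
\limsup_{n\to\infty}\bigl|\Phi_n^{C,C'}-L_{C,C'}\bigr|\le(\nu\otimes\mu)(C\setminus C_k)+(\nu\otimes\mu)(C\setminus C_k)\,(\nu\otimes\mu)(C')\le 2\,(\nu\otimes\mu)(C\setminus C_k),
\]
and the right-hand side tends to $0$ as $k\to\infty$ since $C\setminus C_k\downarrow\emptyset$. Thus $C\in\mathcal M_{C'}$; the argument for decreasing intersections is identical, so $\mathcal M_{C'}$ is a $\lambda$-system and $\mathcal M_{C'}=\mathcal B(\R_+^2)$.

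This proves $\Phi_n^{C,C'}\xrightarrow{a.s.}L_{C,C'}$ for every $C\in\mathcal B(\R_+^2)$ and every rectangle $C'$. To lift the restriction on $C'$, I would fix an arbitrary $C\in\mathcal B(\R_+^2)$ and let $\mathcal N_{C}$ be the collection of $C'\in\mathcal B(\R_+^2)$ for which $\Phi_n^{C,C'}\xrightarrow{a.s.}L_{C,C'}$; then $\mathcal R(\R_+^2)\subseteq\mathcal N_C$ by what was just proved, and $\mathcal N_C$ is a $\lambda$-system by exactly the same reasoning---the only change being that the key bound $\Phi_n^{C,\,C'\setminus C'_k}\le\Phi_n^{\R_+^2,\,C'\setminus C'_k}\le\frac1{2n}\sum_{j=1}^{2n-1}\psi_{C'\setminus C'_k}(\bm s_j)$ now uses $\mathds 1_C\le 1$. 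Hence $\mathcal N_C=\mathcal B(\R_+^2)$, and since $C$ was arbitrary this gives \eqref{eq:second_mom_ind_bor}. I expect the only genuinely delicate point to be the bookkeeping of exceptional sets: each member of the auxiliary collections $\mathcal M_{C'}$ and $\mathcal N_C$ carries its own $\mu^{\N}$-null set, so one must check that verifying the $\lambda$-system axioms never requires discarding more than a countable union of such sets together with countably many instances of the classical law of large numbers---which is indeed the case, the approximating sequences $(C_k)$ and $(C'_k)$ being countable. Everything else is a routine bilinear and monotone-class extension of \cref{lem:first_sec_mom_for_ret}.
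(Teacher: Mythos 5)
Your proof is correct and follows essentially the same route as the paper: a two-pass monotone-class (Dynkin $\pi$--$\lambda$) bootstrap of \cref{lem:first_sec_mom_for_ret}, first extending over $C$ with $C'$ a fixed rectangle and then over $C'$ with $C$ an arbitrary Borel set. The only real variation is in the increasing-limit step, where you replace the paper's uniform-in-$n$ convergence and interchange of the limits in $n$ and $m$ by a direct deterministic domination of $\Phi_n^{C,C'}-\Phi_n^{C_k,C'}$ through the i.i.d.\ law of large numbers for $\psi_{C\setminus C_k}(\bm s_\ell)$ and continuity of measure, which is a clean equivalent; just note that for a genuine $\lambda$-system you should also record closure under proper differences $D\setminus C$ (not only complements), which follows by exactly the same linearity identity $\mathds 1_{D\setminus C}=\mathds 1_{D}-\mathds 1_{C}$ you already use.
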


\begin{proof}
We first fix a rectangle $A\times B\in\mathcal{R}(\R_+^2)$ and we consider the set
\begin{equation}
\mathcal{A}_{A\times B}\coloneqq \left\{C\in\mathcal{B}(\R_+^2)\middle |\text{Eq. }\eqref{eq:second_mom_ind_bor} \text{ holds with } C'=A\times B \right\}.
\end{equation}
By \cref{lem:first_sec_mom_for_ret}, we have $\mathcal{R}(\R_+^2)\subseteq\mathcal{A}_{A\times B}\subseteq \mathcal{B}(\R_+^2)$.  If we show that $\mathcal{A}_{A\times B}$ is a monotone class, then we can conclude that $\mathcal{A}_{A\times B}=\mathcal{B}(\R_+^2)$. Indeed, by the monotone class theorem (note that $\mathcal{R}(\R_+^2)$ is closed under finite intersections), we have that
\begin{equation}\label{eq:class_inclusions}
\mathcal{B}(\R_+^2)= \sigma\left(\mathcal{R}(\R_+^2)\right)=\lambda\left(\mathcal{R}(\R_+^2)\right) \subseteq\mathcal{A}_{A\times B}.
\end{equation}
The equality $\mathcal{A}_{A\times B}=\mathcal{B}(\R_+^2)$ implies that \cref{eq:second_mom_ind_bor} holds for every set in $\mathcal{B}(\R_+^2)\times \mathcal{R}(\R_+^2)$. Finally, if we also show that for any fixed Borel set $C^*\in\mathcal{B}(\R_+^2)$, the set
\begin{equation}
\mathcal{A}_{C^*}\coloneqq \left\{C'\in\mathcal{B}(\R_+^2)\middle |\text{Eq. }\eqref{eq:second_mom_ind_bor} \text{ holds with } C=C^* \right\}
\end{equation}
is a monotone class, then using again the same arguments that we used in \cref{eq:class_inclusions} (note that $\mathcal{R}(\R_+^2)\subseteq\mathcal{A}_{C^*}$ thanks to the previous step) we can conclude that \cref{eq:second_mom_ind_bor} holds for every pair of sets in $\mathcal{B}(\R_+^2)\times \mathcal{B}(\R_+^2)$, proving the lemma. Therefore, in order to conclude the proof, it is sufficient to show that $\mathcal{A}_{A \times B}$ and $\mathcal{A}_{C^*}$ are monotone classes.

\medskip

We start by proving that $\mathcal{A}_{A\times B}$ is a monotone class:

\begin{itemize}
\item Obviously $\R_+^2\in \mathcal{A}_{A\times B}$.
\item If $C,D\in\mathcal{A}_{A\times B}$ and $C\subseteq D$, then $D\setminus C\in \mathcal{A}_{A\times B}$ because $\mathds{1}_{D\setminus C}=\mathds{1}_{D}-\mathds{1}_{C}$. 
\item Let now $(C_m)_{m\in\N}$ be a sequence of sets in $\mathcal{A}_{A\times B}$ such that $C_m \subseteq C_{m+1}$ for all $m\in\N$. We want to show that $C\coloneqq\bigcup_m C_m\in \mathcal{A}_{A\times B}$, i.e.\ that 
\begin{equation}\label{eq:ifbueiwbfowebnfoewinf}
\E\left[\frac{1}{4n^2}\sum_{\ell,j=1}^{2n-1}\mathds{1}_{C}\left(\bm \xi_\ell,\bm s_\ell\right)\mathds{1}_{A\times B}\left(\bm \xi_j,\bm s_j\right)\middle| ( \bm s_i)_i\right]
\xrightarrow[n\to\infty]{a.s.} 
\E\left[\mathds{1}_{C}(\bm V,\bm U)\right]\E\left[\mathds{1}_{A \times B}(\bm V,\bm U)\right].
\end{equation}
Since $\mathds{1}_{C}=\lim_{m\to \infty}\mathds{1}_{C_m}$, then by monotone convergence we have for all $n\in \N$,
\begin{multline}\label{eq:lim_of_indicator}
\E\left[\frac{1}{4n^2}\sum_{\ell,j=1}^{2n-1}\mathds{1}_{C_m}\left(\bm \xi_\ell,\bm s_\ell\right)\mathds{1}_{A\times B}\left(\bm \xi_j,\bm s_j\right)\middle| ( \bm s_i)_i\right]
\xrightarrow[m\to\infty]{a.s.}\\
\frac{1}{4n^2}\sum_{\ell,j=1}^{2n-1}\E\left[\mathds{1}_{C}\left(\bm \xi_\ell,\bm s_\ell\right)\mathds{1}_{A\times B}\left(\bm \xi_j,\bm s_j\right)\middle|  \bm s_\ell,\bm s_j\right].
\end{multline}
We also claim that:
\begin{itemize}
	\item[(a)] For all $m\in\N$, 
	\begin{equation}
	\frac{1}{4n^2}\sum_{\ell,j=1}^{2n-1}\E\left[\mathds{1}_{C_m}\left(\bm \xi_\ell,\bm s_\ell\right)\mathds{1}_{A\times B}\left(\bm \xi_j,\bm s_j\right)\middle|  \bm s_\ell,\bm s_j\right]\xrightarrow[n\to\infty]{a.s.} \E\left[\mathds{1}_{C_m}(\bm V,\bm U)\right]\E\left[\mathds{1}_{A \times B}(\bm V,\bm U)\right].
	\end{equation}
	\item[(b)] The convergence in \cref{eq:lim_of_indicator} holds uniformly for all $n\in\N$.
\end{itemize}
Item (a) holds since $C_m \in \mathcal{A}_{A\times B}$.
Item (b) will be proved at the end. Items (a) and (b) allow us to exchange the following a.s.-limits as follows 
\begin{align}
\lim_{n\to \infty}\E&\left[\frac{1}{4n^2}\sum_{\ell,j=1}^{2n-1}\mathds{1}_{C}\left(\bm \xi_\ell,\bm s_\ell\right)\mathds{1}_{A\times B}\left(\bm \xi_j,\bm s_j\right)\middle| ( \bm s_i)_i\right]\\
\stackrel{\eqref{eq:lim_of_indicator}}{=} &\lim_{n\to \infty}\lim_{m\to \infty}
\frac{1}{4n^2}\sum_{\ell,j=1}^{2n-1}\E\left[\mathds{1}_{C_m}\left(\bm \xi_\ell,\bm s_\ell\right)\mathds{1}_{A\times B}\left(\bm \xi_j,\bm s_j\right)\middle|  \bm s_\ell,\bm s_j\right]\\
= &\lim_{m\to \infty}\lim_{n\to \infty}
\frac{1}{4n^2}\sum_{\ell,j=1}^{2n-1}\E\left[\mathds{1}_{C_m}\left(\bm \xi_\ell,\bm s_\ell\right)\mathds{1}_{A\times B}\left(\bm \xi_j,\bm s_j\right)\middle|  \bm s_\ell,\bm s_j\right]\\
=
&\lim_{m\to \infty}\E\left[\mathds{1}_{C_m}(\bm V,\bm U)\right]\E\left[\mathds{1}_{A \times B}(\bm V,\bm U)\right]=\E\left[\mathds{1}_{C}(\bm V,\bm U)\right]\E\left[\mathds{1}_{A \times B}(\bm V,\bm U)\right],
\end{align}
where the last equality follows by monotone convergence. This proves \cref{eq:ifbueiwbfowebnfoewinf} and concludes the proof (up to proving item (b)) that $\mathcal{A}_{A\times B}$ is a monotone class.

\textbf{Proof of  item (b). }Since $\mathds{1}_{C\setminus C_m}=\mathds{1}_{C}-\mathds{1}_{C_m}$, it is enough to show that
\begin{equation}
\sup_{n}\E\left[\frac{1}{4n^2}\sum_{\ell,j=1}^{2n-1}\mathds{1}_{C\setminus C_m}\left(\bm \xi_\ell,\bm s_\ell\right)\mathds{1}_{A\times B}\left(\bm \xi_j,\bm s_j\right)\middle| ( \bm s_i)_i\right]\xrightarrow[m\to\infty]{a.s.}0.
\end{equation}
We set $D_m\coloneqq C\setminus C_m$, and define 
$$(D_m)^{s}\coloneqq\{x\in \R_+|(x,s)\in D_m\},\quad \text{for all} \quad s\in\R,$$
 
$$\pi_Y(D_m)\coloneqq\{y\in \R_+| \exists x\in \R_+ \text{ s.t. }(x,y)\in D_m\}.$$ 

Since  $\bm \xi_\ell\stackrel{d}{=}\bm V$ for all $\ell\in[2n-1]$, then for all $\ell,j\in[2n-1]$, a.s.
\begin{align}
\E\left[\mathds{1}_{D_m}\left(\bm \xi_\ell,\bm s_\ell\right)\mathds{1}_{A\times B}\left(\bm \xi_j,\bm s_j\right)\middle| \bm s_\ell,\bm s_j\right]
&=\P\left(\bm \xi_\ell\in (D_m)^{\bm s_\ell},\bm \xi_j\in A\middle| \bm s_\ell\right)\mathds{1}_{\pi_Y(D_m)}(\bm s_\ell)\mathds{1}_{B}(\bm s_j)\\
&\leq \P\left(\bm \xi_\ell\in (D_m)^{\bm s_\ell}\middle| \bm s_\ell\right)=\P\left(\bm V\in (D_m)^{\bm s_\ell}\middle| \bm s_\ell\right).\label{eq:bound_for_expect1}
\end{align}
Therefore a.s.
\begin{equation}\label{eq:bnd_for_exp}
\sup_{n}\E\left[\frac{1}{4n^2}\sum_{\ell,j=1}^{2n-1}\mathds{1}_{C\setminus C_m}\left(\bm \xi_\ell,\bm s_\ell\right)\mathds{1}_{A\times B}\left(\bm \xi_j,\bm s_j\right)\middle| ( \bm s_i)_i\right]
\leq\P\left(\bm V\in (C\setminus C_m)^{\bm s_\ell}\middle| \bm s_\ell\right).
\end{equation}
Now the sequence $\P\left(\bm V\in (C\setminus C_m)^{\bm s_\ell}\middle| \bm s_\ell\right)$ is a.s.\ non-increasing (because  $C_m \subseteq C_{m+1}$) and hence has an a.s.\ limit. The limit is non-negative and its expectation is the limit of expectations which is $0$ because  $C\coloneqq\bigcup_m C_m$. This completes the proof of item (b).
\end{itemize}

It remains to prove that $\mathcal{A}_{C*}$ is also a monotone class.
The proof is similar to the proof above, replacing the bound in \cref{eq:bound_for_expect1} by
\begin{multline}\label{eq:bound_for_expect2}
\E\left[\mathds{1}_{C^*}\left(\bm \xi_\ell,\bm s_\ell\right)\mathds{1}_{D_m}\left(\bm \xi_j,\bm s_j\right)\middle| \bm s_\ell,\bm s_j\right]\\
=\P\left(\bm \xi_\ell\in (C^*)^{\bm s_\ell},\bm \xi_j\in (D_m)^{\bm s_j}\middle| \bm s_\ell,\bm s_j\right)\mathds{1}_{\pi_Y(C^*)}(\bm s_\ell)\mathds{1}_{\pi_Y(D_m)}(\bm s_j)
\leq \P\left(\bm \xi_j\in (D_m)^{\bm s_j}\middle|\bm s_j\right).
\end{multline}
This completes the proof of the lemma.
\end{proof}

We now generalize the result in \cref{lem:ind_fnct} to all bounded and measurable functions, hereby proving \cref{prop:conv_for_bnd_cont_funct}.

\begin{proof}[Proof of \cref{prop:conv_for_bnd_cont_funct}]
We further assume that $g:\R^2_+\to\R_+$ is non-negative, the general case following by stardand arguments.
Fubini's theorem, together with the fact that $g(x,y)=\int_0^{\|g\|_\infty}\mathds{1}_{\left\{z\leq g(x,y)\right\}}dz$, yields
\begin{multline}
\E\left[\left(\frac{1}{2n}\sum_{j=1}^{2n-1}g\left(\bm \xi_j,s_j\right)\right)^2\middle| (\bm s_i)_i\right]
=\E\left[\frac{1}{4n^2}\sum_{\ell,j=1}^{2n-1}\int_0^{\|g\|_\infty}\mathds{1}_{\left\{z\leq g\left(\bm \xi_\ell,s_\ell\right)\right\}}dz \int_0^{\|g\|_\infty}\mathds{1}_{\left\{t\leq g\left(\bm \xi_j,s_j\right)\right\}}dt\middle| (\bm s_i)_i\right]\\
=\int_0^{\|g\|_\infty}\int_0^{\|g\|_\infty}\frac{1}{4n^2}\sum_{\ell,j=1}^{2n-1}\E\left[\mathds{1}_{A(z)}\left(\bm \xi_\ell,\bm s_\ell\right)\mathds{1}_{A(t)}\left(\bm \xi_j,\bm s_j\right)\middle| \bm s_\ell,\bm s_j\right]dz \; dt,
\end{multline}
where  $A(s)=\left\{(x,y)\in\R_+^2 \middle | g(x,y)\geq s \right\}$. 
By \cref{lem:ind_fnct}, for almost every $(z,t)\in\R^2_+$
\begin{equation}
\frac{1}{4n^2}\sum_{\ell,j=1}^{2n-1}\E\left[\mathds{1}_{A(z)}\left(\bm \xi_\ell,\bm s_\ell\right)\mathds{1}_{A(t)}\left(\bm \xi_j,\bm s_j\right)\middle| (\bm s_i)_i\right]\xrightarrow[n\to \infty]{a.s.} \E\left[\mathds{1}_{A(z)}\left(\bm V,\bm U\right)\right]\E\left[\mathds{1}_{A(t)}\left(\bm V,\bm U\right)\right].
\end{equation}
Since the left-hand side is bounded by $1$, we can conclude by dominated convergence that
\begin{equation}
\E\left[\left(\frac{1}{2n}\sum_{j=1}^{2n-1}g\left(\bm \xi_j,s_j\right)\right)^2\middle| (\bm s_i)_i\right]\xrightarrow[n\to \infty]{a.s}\E\left[g\left(\bm V,\bm U\right)\right]^2,
\end{equation} 
completing the proof of the proposition.
\end{proof}

\section{Proof of the central limit theorems}\label{sect:CLT}

In this section, we start by proving \cref{thm:CLT} using \cref{prop:clt_sum_of_the_g} and then we prove the latter result.
\begin{proof}[Proof of \cref{thm:CLT}]
	We set $\bm X_n\coloneqq\frac{\bm W_n(s)- 2n \cdot \E_{\vec{s}}[\bm W_n(s)]}{\sqrt{2n}}$. In order to prove the convergence in \cref{eq:CLT}, it is enough to show that, for every $t \in \mathbb{R}$,
	\begin{equation}\label{eq:goal_proof_MGF}
	\E_{\vec{s}} \left[e^{it\bm X_n}\right]\xrightarrow{n\to \infty}e^{-\frac{t^2}{2} \left(\sigma(s)^2+\rho(s)^2 \right)},
	\end{equation}
	where we recall that $\sigma(s)^2=\E\left[F_s(\bm V,\bm U)-F^2_s(\bm V,\bm U)\right]$ and $\rho(s)^2 = \rho_{F_s}^2$. Note that $\sigma(s)^2+\rho(s)^2$ is finite thanks to \cref{prop:clt_sum_of_the_g} and the fact that $F_s-F_s^2$ is a bounded and measurable function.

	Recalling that $\bm W_n(s)=\sum_{j=1}^{2n-1}\mathds{1}_{W_j}$, $W_{j}$ being the event that the team $T_{0}$ wins against the team $T_j$, 
	and that, conditioning on $\left( \bm \xi^{0}_r \right)_{_{r \in [2n-1]}}$, the results of different matches are independent, we have that
	\begin{multline}
	\E_{\vec{s}}\left[e^{it\bm X_n}\right]
	=e^{-\sqrt{2n} \cdot  \E_{\vec{s}}[\bm W_n(s)] \cdot it} \cdot \E_{\vec{s}} \left[e^{\frac{it}{\sqrt{2n}} \sum_{j=1}^{2n-1}\mathds{1}_{W_j}}\right]\\
	=e^{-\sqrt{2n}  \cdot  \E_{\vec{s}}[\bm W_n(s)] \cdot it}\cdot \E_{\vec{s}}\left[\E_{\vec{s}}\left[e^{\frac{it}{\sqrt{2n}}\sum_{j=1}^{2n-1}\mathds{1}_{W_j}}\middle|\left( \bm \xi^{0}_r \right)_{_{r \in [2n-1]}}  \right]\right] \\
	=e^{-\sqrt{2n} \cdot  \E_{\vec{s}}[\bm W_n(s)] \cdot it}\cdot \E_{\vec{s}}\left[ \prod_{j=1}^{2n-1} \E_{\vec{s}}\left[e^{\frac{it}{\sqrt{2n}}\mathds{1}_{W_j}}\middle| \bm \xi^{0}_j   \right]\right].
	\end{multline}
	Since, by assumption, we have that
	$
	\P_{\vec{s}}\left(W_j\middle | \bm \xi^{0}_j,\bm \xi^{j}_j\right)=f(s\cdot \bm \xi^{0}_j,s_j \cdot \bm \xi^{j}_j)
	$ 
	and, for all $j\in [2n-1]$, $\bm \xi^j_j$ is independent of $\bm \xi^{0}_j$, $\bm \xi^{0}_j\stackrel{d}{=}\bm \xi_j$ and $\bm \xi^j_j\stackrel{d}{=}\bm V'$, we have that
	\begin{multline}
	\E_{\vec{s}}\left[e^{it\bm X_n}\right] 
	= e^{-\sqrt{2n} \cdot  \E_{\vec{s}}[\bm W_n(s)] \cdot it}\cdot \E\left[ \prod_{j=1}^{2n-1} \E\left[ 1 + \left( e^{\frac{it}{\sqrt{2n}}} -1 \right) f(s\cdot \bm \xi^{0}_j,s_j \cdot \bm \xi^{j}_j) \middle| \bm \xi^{0}_j \right]\right] \\
	= e^{-\sqrt{2n} \cdot  \E_{\vec{s}}[\bm W_n(s)] \cdot it}\cdot \E\left[ \prod_{j=1}^{2n-1} \left(1 + \left( e^{\frac{it}{\sqrt{2n}}} -1 \right) \cdot F_s(\bm \xi_j,s_j) \right)\right],
	\end{multline}
	where we recall that $F_s(x,y)=\E\left[f\left(s\cdot x,y\cdot \bm V'\right)\right]$.
	Rewriting the last term as
	\begin{equation}
	e^{-\sqrt{2n}  \cdot  \E_{\vec{s}}[\bm W_n(s)] \cdot  it}\cdot \E\left[ e^{ \sum_{j=1}^{2n-1} \log  \left(1 +  (e^{it /\sqrt{2n}} -1 ) \cdot F_s(\bm \xi_j,s_j ) \right) } \right],
	\end{equation}
	and observing that
	\begin{multline}
	\sum_{j=1}^{2n-1}  \log \left(  1 + \left( e^{it /\sqrt{2n}} -1 \right) \cdot F_s\left(\bm \xi_j,s_j \right)  \right) \\
	=  \sum_{j=1}^{2n-1} \left(\frac{it}{\sqrt{2n}}  \cdot F_s\left(\bm \xi_j,s_j \right) - \frac{t^2}{4n} \left( F_s\left(\bm \xi_j,s_j \right) - F_s^2\left(\bm \xi_j,s_j \right) \right) + O \left(\frac{1}{n\sqrt{n}} \right)\right) \\
	=  \frac{i t}{\sqrt{2n}} \sum_{j=1}^{2n-1} F_s\left(\bm \xi_j,s_j \right) - \frac{t^2}{2} \cdot  \frac{1}{2n} \sum_{j=1}^{2n-1}  \left( F_s\left(\bm \xi_j,s_j \right) - F_s^2\left(\bm \xi_j,s_j \right) \right)  + O \left(\frac{1}{\sqrt{n}} \right),
	\end{multline}
	we obtain that the characteristic function  $\E_{\vec{s}}\left[e^{it\bm X_n}\right]$ is equal to
	\begin{equation}
	e^{O\left( \frac{1}{\sqrt n} \right)} \cdot e^{-\frac{t^2}{2}\sigma(s)^2}
	\cdot \E \left[	e^{  \frac{it}{\sqrt{2n}} \left( \sum_{j=1}^{2n-1} F_s (\bm \xi_j,s_j ) - 2n \cdot  \E_{\vec{s}}[\bm W_n(s)] \right) }
	\cdot e^{-  \frac{t^2}{2} \cdot   \left(\frac{1}{2n} \sum_{j=1}^{2n-1}  \left( F_s\left(\bm \xi_j,s_j \right) - F_s^2\left(\bm \xi_j,s_j \right) \right) - \sigma(s)^2  \right) } \right].
	\end{equation}	
	Now we set
	\begin{align}
		&\bm A_n\coloneqq e^{  \frac{it}{\sqrt{2n}} \left( \sum_{j=1}^{2n-1} F_s (\bm \xi_j,s_j ) - 2n \cdot  \E_{\vec{s}}[\bm W_n(s)] \right) },\\
		&\bm B_n\coloneqq e^{-  \frac{t^2}{2} \cdot   \left(\frac{1}{2n} \sum_{j=1}^{2n-1}  \left( F_s\left(\bm \xi_j,s_j \right) - F_s^2\left(\bm \xi_j,s_j \right) \right) - \sigma(s)^2  \right) },
	\end{align}
	obtaining that $\E_{\vec{s}}\left[e^{it\bm X_n}\right]=	e^{O\left( \frac{1}{\sqrt n} \right)} \cdot e^{-\frac{t^2}{2}\sigma(s)^2}\left(\E\left[\bm A_n\right]-\E\left[\bm A_n(1-\bm B_n)\right]\right)$.
	Hence, \cref{eq:goal_proof_MGF} holds if we show that
	\begin{enumerate}
		\item $\E\left[\bm A_n\right] \to e^{-\frac{t^2}{2}  \rho(s)^2} $, \label{eq:clt_delta_small2}
		\item $\E\left[\bm A_n(1-\bm B_n)\right] \to 0 $ .\label{eq:clt_delta_big2}
	\end{enumerate}
	Item 1 follows from \cref{prop:clt_sum_of_the_g}. For Item 2, since $|\bm A_n|=1$, we have that
	\begin{equation}
		\left|\E\left[\bm A_n(1-\bm B_n)\right] \right|\leq \E\left[|1-\bm B_n|\right].
	\end{equation}
	Recalling that $\sigma(s)^2=\E\left[F_s(\bm V,\bm U)-F^2_s(\bm V,\bm U)\right]$, and that $\bm \xi_j\stackrel{d}{=}\bm V$ for all $j\in[2n-1]$, we have that
	\begin{multline}
		\frac{1}{{2n}}  \sum_{j=1}^{2n-1}  \left( F_s\left(\bm \xi_j,s_j \right) - F_s^2\left(\bm \xi_j,s_j \right) \right)  -   \sigma(s)^2
		=\\
		\frac{1}{{2n}}  \sum_{j=1}^{2n-1}  \left( F_s\left(\bm \xi_j,s_j \right) - F_s^2\left(\bm \xi_j,s_j \right) \right)
		-\frac{1}{{2n}}  \sum_{j=1}^{2n-1}  \E\left[ F_s\left(\bm V,s_j \right) - F_s^2\left( \bm V,s_j \right) \right]\\
		+\frac{1}{{2n}}  \sum_{j=1}^{2n-1}  \E\left[ F_s\left(\bm V,s_j \right) - F_s^2\left( \bm V,s_j \right) \right]
		-\E\left[F_s(\bm V,\bm U)-F^2_s(\bm V,\bm U)\right]   \xrightarrow{P} 0,
	\end{multline}
	where for the limit we used once again \cref{prop:clt_sum_of_the_g} and similar arguments to the ones already used in the proof of \cref{prop:first_mom}.
	Since the function $e^{-t^2x/2}$ is continuous and the random variable $\frac{1}{{2n}} \left( \sum_{j=1}^{2n-1}  F_s\left(\bm \xi_j,s_j \right) - F_s^2\left(\bm \xi_j,s_j \right) \right) -   \sigma(s)^2$ is bounded, we can conclude that $\E\left[|1-\bm B_n|\right]\to 0$. This ends the proof of \cref{thm:CLT}.
\end{proof}

The rest of this section is devoted to the proof of \cref{prop:clt_sum_of_the_g}. We start by stating a lemma that shows how the coefficients $\alpha_n$ defined in \cref{eq:def_alpha_n} control the correlations of the process $\bm \xi$. 
\begin{lem}[Theorem 17.2.1 in \cite{MR0322926}]\label{lem:decay_correlations}
	Fix $\tau \in \mathbb{N}$ and let $\bm X$ be a random variable measurable w.r.t.\ $\mathcal{A}_1^{k}$ and $\bm Y$ a random variable measurable w.r.t.\ $\mathcal{A}^{\infty}_{k + \tau}$. Assume, in addition, that $| \bm X | < C_1$ almost surely and $| \bm Y | < C_2$ almost surely. Then 
	\begin{equation}\label{eq:decay_correlations}
	\left|	\E \left[\bm X \bm Y\right] - \E [\bm X] \E[\bm Y]  \right| \leq 4 \cdot C_1 \cdot C_2 \cdot \alpha_{\tau}.
	\end{equation}
\end{lem}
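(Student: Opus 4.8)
The plan is to prove this classical covariance inequality for $\alpha$-mixing via two successive conditioning reductions, each peeling off one of $\bm X,\bm Y$ at the cost of a factor $C_1$ resp.\ $C_2$, followed by the elementary estimate in the $\{-1,1\}$-valued case, where the factor $4$ appears. Write $\mathcal F=\mathcal A_1^{k}$ and $\mathcal G=\mathcal A_{k+\tau}^{\infty}$, and recall that by the definition in \eqref{eq:def_alpha_n} the supremum runs over \emph{all} $k\in\N$, so in particular for our fixed $k$ we have $|\P(A\cap B)-\P(A)\P(B)|\le\alpha_\tau$ for every $A\in\mathcal F$ and $B\in\mathcal G$. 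Throughout, $\Cov(\bm U,\bm V)=\E[\bm U\bm V]-\E[\bm U]\E[\bm V]$, so the quantity to bound is $|\Cov(\bm X,\bm Y)|$.

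\textbf{Step 1 (peel off $\bm Y$).} Set $\bm Z\coloneqq\E[\bm X\mid\mathcal G]-\E[\bm X]$, which is $\mathcal G$-measurable and bounded, and let $g\coloneqq\mathds 1_{\{\bm Z\ge0\}}-\mathds 1_{\{\bm Z<0\}}$; then $g$ is $\mathcal G$-measurable, takes values in $\{-1,1\}$, and $g\bm Z=|\bm Z|$. Since $\bm Y$ is $\mathcal G$-measurable, a tower-property computation gives $\Cov(\bm X,\bm Y)=\E[\bm Y\bm Z]$, hence, using $|\bm Y|\le C_2$ a.s.\ and that $g$ is $\mathcal G$-measurable,
\[
|\Cov(\bm X,\bm Y)|\le C_2\,\E[|\bm Z|]=C_2\,\E[g\bm Z]=C_2\bigl(\E[g\bm X]-\E[g]\E[\bm X]\bigr)=C_2\,\Cov(g,\bm X).
\]
Note $\Cov(g,\bm X)=\E[|\bm Z|]\ge0$, so this reads $|\Cov(\bm X,\bm Y)|\le C_2\,|\Cov(g,\bm X)|$.

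\textbf{Step 2 (peel off $\bm X$).} Apply the same maneuver with roles reversed: set $\bm W\coloneqq\E[g\mid\mathcal F]-\E[g]$, which is $\mathcal F$-measurable and bounded, and $h\coloneqq\mathds 1_{\{\bm W\ge0\}}-\mathds 1_{\{\bm W<0\}}$, which is $\mathcal F$-measurable, $\{-1,1\}$-valued, with $h\bm W=|\bm W|$. Since $\bm X$ is $\mathcal F$-measurable, $\Cov(g,\bm X)=\E[\bm X\bm W]$, and using $|\bm X|\le C_1$ a.s.,
\[
|\Cov(g,\bm X)|\le C_1\,\E[|\bm W|]=C_1\,\E[h\bm W]=C_1\bigl(\E[hg]-\E[h]\E[g]\bigr)=C_1\,\Cov(h,g).
\]
Finally, $h$ is $\mathcal F$-measurable and $g$ is $\mathcal G$-measurable, both valued in $\{-1,1\}$. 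Writing $h=2\mathds 1_{\{h=1\}}-1$ and $g=2\mathds 1_{\{g=1\}}-1$ with $\{h=1\}\in\mathcal F$ and $\{g=1\}\in\mathcal G$, bilinearity yields $\Cov(h,g)=4\,\Cov(\mathds 1_{\{h=1\}},\mathds 1_{\{g=1\}})$, so $|\Cov(h,g)|\le 4\alpha_\tau$ by the bound recalled in the first paragraph. Chaining the three steps gives $|\E[\bm X\bm Y]-\E[\bm X]\E[\bm Y]|\le C_2\,C_1\,|\Cov(h,g)|\le 4\,C_1\,C_2\,\alpha_\tau$, which is \eqref{eq:decay_correlations}.

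This is a standard result and there is no real obstacle; the only points requiring care are that the sign functions $g,h$ be genuinely $\{-1,1\}$-valued and measurable with respect to the correct $\sigma$-algebra (hence the convention $\mathds 1_{\{\cdot\ge0\}}-\mathds 1_{\{\cdot<0\}}$ rather than $\operatorname{sign}$, which vanishes on $\{\bm Z=0\}$), and that each conditioning reduction contributes exactly $C_2$, resp.\ $C_1$, with no extra constant, so that the whole factor $4$ originates from the single indicator step. Boundedness of $\bm X,\bm Y,g,h,\bm Z,\bm W$ makes all integrals and the tower-property manipulations automatically legitimate.
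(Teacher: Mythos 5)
Your proof is correct: both conditioning reductions (peeling off $\bm Y$ then $\bm X$ via the sign variables $g,h$, each measurable w.r.t.\ the appropriate $\sigma$-algebra) and the final reduction to indicators, where the definition of $\alpha_\tau$ applies and the factor $4$ arises, are carried out accurately. The paper itself gives no proof of this lemma — it simply cites Theorem 17.2.1 of Ibragimov--Linnik — and your argument is precisely the standard proof of that classical covariance inequality, so it faithfully supplies what the citation leaves to the reference.
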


We now focus on the behaviour of the random variables $\sum_{j=1}^{2n-1}g\left(\bm \xi_j,s_j\right)$ appearing in the statement of \cref{prop:clt_sum_of_the_g}. It follows directly from \cref{prop:conv_for_bnd_cont_funct} and 
Chebyshev's inequality that, for $\mu^{\N}$-almost every sequence $(s_i)_{{i\in\N}}\in\R^{\N}_{+}$,
\begin{equation}
	\frac{1}{2n} \sum_{j=1}^{2n-1}g\left(\bm \xi_j,s_j\right) \xrightarrow{P} \E \left[g \left( \bm V, \bm U \right)\right].
\end{equation} 
We aim at establishing a central limit theorem. 
Recalling the definition of the function $\tilde g$ in the statement of \cref{prop:clt_sum_of_the_g}, we note that, for all $j \in \mathbb{N}$,
\begin{equation}
\tilde{g} \left(\bm \xi_j,s_j \right) = g\left(\bm \xi_j,s_j \right) - \E[g\left(\bm V, s_j\right)],
\end{equation}
and so $\E \left[\tilde{g}  \left(\bm \xi_j,s_j \right) \right] = 0$.
Define
\begin{equation}
	\rho_{g,n}^2 \coloneqq \Var \left( \sum_{j=1}^{2n-1}\tilde{g} \left(\bm \xi_j,s_j\right) \right) =  \Var \left( \sum_{j=1}^{2n-1}g\left(\bm \xi_j,s_j\right) \right).
\end{equation}
The following lemma shows that the variance $\rho_{g,n}^2$ is asymptotically linear in $n$ and proves the first part of \cref{prop:clt_sum_of_the_g}.

\begin{lem} \label{lem:variance_is_linear}
The quantity $\rho_g^2$ defined in \cref{eq:def_of_rho} is finite. Moreover, for $\mu^{\N}$-almost every sequence $(s_i)_{{i\in\N}}\in\R^{\N}_{+}$, we have that 
	\begin{equation} \label{eq:asympt_for_var}
\rho_{g,n}^2 = 2n\cdot \rho_g^2 \cdot (1+o(1)).
\end{equation}
\end{lem}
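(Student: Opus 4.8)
The plan is to expand the variance $\rho_{g,n}^2 = \Var\big(\sum_{j=1}^{2n-1}\tilde g(\bm\xi_j,s_j)\big)$ into a diagonal term and an off-diagonal term, and to show that after dividing by $2n$ the diagonal term converges to $\E[\tilde g(\bm V,\bm U)^2]$ and the off-diagonal term converges to $2\sum_{k\ge 1}\E[\tilde g(\bm\xi_1,\bm U)\tilde g(\bm\xi_{1+k},\bm U')]$, for $\mu^{\N}$-almost every $(s_i)_i$. First I would write
\begin{equation}
\rho_{g,n}^2 = \sum_{j=1}^{2n-1}\Var\big(\tilde g(\bm\xi_j,s_j)\big) + \sum_{\substack{j,k=1\\ j\neq k}}^{2n-1}\Cov\big(\tilde g(\bm\xi_j,s_j),\tilde g(\bm\xi_k,s_k)\big).
\end{equation}
Since $\bm\xi_j\stackrel{d}{=}\bm V$, each summand of the first sum equals $\Var(\tilde g(\bm V,s_j)) = \E[\tilde g(\bm V,s_j)^2]$ (recall $\E[\tilde g(\bm V,y)]=0$ for every $y$), so by the strong law of large numbers applied to the i.i.d.\ sequence $\big(\E[\tilde g(\bm V,\bm s_j)^2]\big)_j$ — which is bounded since $g$ is bounded — we get $\frac{1}{2n}\sum_{j=1}^{2n-1}\Var(\tilde g(\bm\xi_j,s_j)) \xrightarrow{a.s.} \E[\tilde g(\bm V,\bm U)^2]$. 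This handles the diagonal term for $\mu^{\N}$-almost every sequence.

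For the off-diagonal term I would use the stationarity assumption \eqref{eq:stationarity}: for $j<k$, $\Cov(\tilde g(\bm\xi_j,s_j),\tilde g(\bm\xi_k,s_k)) = \E[\tilde g(\bm\xi_1,s_j)\tilde g(\bm\xi_{1+(k-j)},s_k)]$ (the product of means vanishes), so writing $k-j=m$,
\begin{equation}
\sum_{\substack{j\neq k}}\Cov = 2\sum_{m=1}^{2n-2}\ \sum_{j=1}^{2n-1-m}\E\big[\tilde g(\bm\xi_1,s_j)\tilde g(\bm\xi_{1+m},s_{j+m})\big].
\end{equation}
The key tool is \cref{lem:decay_correlations}: approximating $\tilde g(\bm\xi_1,y)$ by a variable measurable w.r.t.\ $\mathcal A_1^{\lceil m/2\rceil}$ and $\tilde g(\bm\xi_{1+m},y')$ by one measurable w.r.t.\ $\mathcal A_{1+m-\lceil m/2\rceil}^{\infty}$ is not directly possible since $\tilde g(\bm\xi_1,\cdot)$ depends only on $\bm\xi_1$; rather, $\tilde g(\bm\xi_1,y)\in\mathcal A_1^1$ and $\tilde g(\bm\xi_{1+m},y')\in\mathcal A_{1+m}^{\infty}$, so with $\tau=m$ the lemma gives $|\E[\tilde g(\bm\xi_1,y)\tilde g(\bm\xi_{1+m},y')] - \E[\tilde g(\bm\xi_1,y)]\E[\tilde g(\bm\xi_{1+m},y')]| \le 4\|g\|_\infty^2\alpha_m$; but both means are $0$, which would be too strong, so instead I keep the conditional structure: condition on $\bm s_j=y$, $\bm s_{j+m}=y'$ and note the relevant bound is simply $|\E[\tilde g(\bm\xi_1,y)\tilde g(\bm\xi_{1+m},y')]| \le 4\|g\|_\infty^2\alpha_m$ directly from \cref{lem:decay_correlations} (the product-of-expectations correction is harmless). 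Summability $\sum_m\alpha_m<\infty$ then gives both the finiteness of $\rho_g^2$ and, via dominated convergence in the $m$-sum, that it suffices to show for each fixed $m$ that $\frac{1}{2n}\sum_{j=1}^{2n-1-m}\E[\tilde g(\bm\xi_1,s_j)\tilde g(\bm\xi_{1+m},s_{j+m})] \to \E[\tilde g(\bm\xi_1,\bm U)\tilde g(\bm\xi_{1+m},\bm U')]$.

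For this last convergence, fix $m$ and define $h_m(y,y') \coloneqq \E[\tilde g(\bm\xi_1,y)\tilde g(\bm\xi_{1+m},y')]$, a bounded measurable function of $(y,y')$; then the sum in question is $\frac{1}{2n}\sum_{j}h_m(s_j,s_{j+m})$, and since $(\bm s_j,\bm s_{j+m})_{j}$ is an $m$-dependent (hence ergodic) stationary sequence with $(\bm s_j,\bm s_{j+m})\stackrel{d}{=}(\bm U,\bm U')$ independent, the ergodic theorem (or the SLLN along $m$ residue classes, using that the $\bm s_i$ are i.i.d.) gives $\frac{1}{2n}\sum_j h_m(s_j,s_{j+m}) \xrightarrow{a.s.} \E[h_m(\bm U,\bm U')] = \E[\tilde g(\bm\xi_1,\bm U)\tilde g(\bm\xi_{1+m},\bm U')]$ for $\mu^{\N}$-almost every $(s_i)_i$; taking a countable intersection over $m\in\N$ preserves the full-measure property. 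Combining the diagonal and off-diagonal limits yields \eqref{eq:asympt_for_var}. The main obstacle is the justification of exchanging the $n\to\infty$ limit with the infinite $m$-sum: one must produce a summable dominating bound uniform in $n$ — precisely $\big|\frac{1}{2n}\sum_{j=1}^{2n-1-m}\E[\tilde g(\bm\xi_1,s_j)\tilde g(\bm\xi_{1+m},s_{j+m})]\big| \le 4\|g\|_\infty^2\alpha_m$ from \cref{lem:decay_correlations}, valid for every $n$ and every sequence $(s_i)_i$ — and then invoke dominated convergence for series together with the almost-sure convergence of each fixed-$m$ term on a common full-measure event.
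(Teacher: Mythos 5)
Your proposal is correct, and for the off-diagonal term it takes a genuinely different route from the paper. The paper rewrites the double sum with the lag outside, reduces to indicators of rectangles via a monotone class argument, and for rectangles factors each summand as a correlation of the $\bm\xi$-marginals times an empirical average of $\mathds{1}_{B\times B'}(s_i,s_{i+k})$; since the lag $k$ runs up to $2n-2$, it then needs control of these empirical averages \emph{uniformly} in $k$, which is precisely what \cref{prop:uniform_bound} in the appendix provides (a cumulant-based concentration bound plus a union bound and Borel--Cantelli). You instead keep $h_m(y,y')=\E[\tilde g(\bm\xi_1,y)\,\tilde g(\bm\xi_{1+m},y')]$ intact, note that both means vanish because $\bm\xi_1\stackrel{d}{=}\bm\xi_{1+m}\stackrel{d}{=}\bm V$, and use \cref{lem:decay_correlations} to get the pointwise bound $|h_m(y,y')|\leq 4\|\tilde g\|_\infty^2\,\alpha_m$, which dominates the $m$-th term uniformly in $n$ and in the sequence $(s_i)_i$; for each fixed $m$ an ordinary strong law for the $(m$-dependent, stationary$)$ sequence $(h_m(\bm s_j,\bm s_{j+m}))_j$ (e.g.\ split into $m+1$ residue classes of i.i.d.\ terms) gives the limit $\E[h_m(\bm U,\bm U')]$, and Tannery/dominated convergence for series on the common full-measure event $\bigcap_m E_m$ lets you exchange the limit with the sum over $m$. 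This is shorter and more elementary: no monotone class extension is needed (since $h_m$ is directly a bounded measurable function of $(y,y')$), and \cref{prop:uniform_bound} becomes unnecessary for this lemma; what the paper's route buys is the explicit factorized structure for rectangles and the uniform empirical estimate, which has some independent interest. Your treatment of the diagonal term (SLLN for the bounded i.i.d.\ variables $\E[\tilde g(\bm V,\bm s_j)^2]$) and of the finiteness of $\rho_g^2$ (the same $O(\alpha_k)$ bound) matches the paper's, so the argument is complete.
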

\begin{proof}
	We have that
	\begin{multline}
	\rho_{g,n}^2 =	\Var \left( \sum_{j=1}^{2n-1}\tilde{g} \left(\bm \xi_j,s_j\right) \right) 
		= \E \left[ \left( \sum_{j=1}^{2n-1}\tilde{g} \left(\bm \xi_j,s_j\right) \right)^2  \right] \\
		 = \E \left[  \sum_{j=1}^{2n-1}\tilde{g}^2 \left(\bm \xi_j,s_j\right)  \right] 
		 +2\cdot  \E \left[ \sum_{i=1}^{2n-2} \sum_{j=i+1}^{2n-1}  \tilde{g} \left(\bm \xi_i,s_i\right) \tilde{g} \left(\bm \xi_j,s_j\right)  \right] \\
		  = \sum_{j=1}^{2n-1}  \E \left[ \tilde{g}^2 \left(\bm \xi_j,s_j\right)  \right]  
		 + 2\cdot \sum_{i=1}^{2n-2} \sum_{k=1}^{2n-1 -i} \E \left[  \tilde{g} \left(\bm \xi_i,s_i\right) \tilde{g} \left(\bm \xi_{i+k},s_{i+k}\right)  \right].
	\end{multline}
	Using similar arguments to the ones already used in the proof of \cref{prop:first_mom}, we have that 
 	\begin{equation}\label{eq:fibiwfwofnbew}
		\sum_{j=1}^{2n-1}  \E \left[\tilde{g}^2 \left(\bm \xi_j,s_j\right)  \right]  = 2n \cdot  \E \left[ \tilde{g}^2  (\bm V, \bm U) \right] + o(n) .
	\end{equation}
	We now show that 
	\begin{equation}\label{eq:wfejbbfweqibfdwequobfd}
	\lim_{n\to \infty} \frac{1}{2n} \sum_{i=1}^{2n-2} \sum_{k=1}^{2n-1 -i} \E \left[  \tilde{g} \left(\bm \xi_i,s_i\right) \tilde{g} \left(\bm \xi_{i+k},s_{i+k}\right)  \right]  =	\sum_{k=1}^{\infty} \E\left[ \tilde{g} (\bm{\xi}_1, \bm U) \tilde{g}  (\bm{\xi}_{1+k}, \bm U') \right].
	\end{equation}
	First we show that the right-hand side is convergent. We start by noting that from Fubini's theorem and \cref{lem:decay_correlations},
	$$ \E \left[\E \left[ \tilde{g} (\bm{\xi}_1, \bm U) \tilde{g}  (\bm{\xi}_{1+k}, \bm U') \middle| \bm{\xi}_1, \bm{\xi}_{1+k} \right]\right] =\int_{\R^2} \E \left[\tilde{g} (\bm{\xi}_1, x) \tilde{g}  (\bm{\xi}_{1+k}, y)\right]  d\mu(x) d\mu(y)\leq 4\cdot \alpha_k.$$
	Therefore, thanks to  the assumption in \cref{eq:strongly_mix_plus}, we have that
	\begin{multline}\label{eq:bnd_with_alpha}
		\sum_{k=1}^{\infty} \E\left[ \tilde{g} (\bm{\xi}_1, \bm U) \tilde{g}  (\bm{\xi}_{1+k}, \bm U') \right] =	\sum_{k=1}^{\infty} \E\left[ \E \left[ \tilde{g} (\bm{\xi}_1, \bm U) \tilde{g}  (\bm{\xi}_{1+k}, \bm U') \middle| \bm{\xi}_1, \bm{\xi}_{1+k} \right] \right] 
		\leq \sum_{k=1}^{\infty} 4 \cdot \alpha_k < \infty.
	\end{multline}  
	
	Now we turn to the proof of the limit in \cref{eq:wfejbbfweqibfdwequobfd}. Using the stationarity assumption for the process $\bm\xi$ in \cref{eq:stationarity}, we can write
	\begin{equation}
		\frac{1}{2n} \sum_{i=1}^{2n-2} \sum_{k=1}^{2n-1 -i} \E \left[ \tilde{g} \left(\bm \xi_i,s_i\right) \tilde{g} \left(\bm \xi_{i+k},s_{i+k}\right)  \right] 
		=  \sum_{k=1}^{2n-2}  	\frac{1}{2n} \sum_{i=1}^{2n-1-k}  \E \left[  \tilde{g} \left(\bm \xi_1,s_i\right) \tilde{g} \left(\bm \xi_{1+k},s_{i+k}\right)  \right].
	\end{equation}
	Using a monotone class argument similar to the one used for the law of large numbers, we will show that the right-hand side of the equation above converges to 
	\begin{equation}
	 \sum_{k=1}^{\infty} \E\left[ \tilde{g} (\bm{\xi}_1, \bm U) \tilde{g}  (\bm{\xi}_{1+k}, \bm U') \right].
	\end{equation}
	We start, as usual, from indicator functions. We have to prove that for all quadruplets $(A,A',B,B')$ of Borel subsets of $\R_+$, it holds that
	\begin{multline}\label{eq:dim_for_ind_fct_centered}
 	\sum_{k=1}^{2n-2}  	\frac{1}{2n} \sum_{i=1}^{2n-1-k}  \E \left[ \tilde{\mathds{1}}_{A \times B}\left(\bm \xi_1,s_i\right)  
 	\tilde{\mathds{1}}_{A' \times B'}\left(\bm \xi_{1+k},s_{i+k}\right) \right] \\
	 \to 
	 \sum_{k=1}^{\infty} \E\left[ \tilde {\mathds{1}}_{A \times B}(\bm{\xi}_1, \bm U)   \tilde {\mathds{1}}_{A' \times B'} (\bm{\xi}_{1+k}, \bm U') \right] <\infty,
\end{multline}
	where for every rectangle $R$ of $\R_+^2$,
\begin{align}
	\tilde{\mathds{1}}_{R}\left(x,y\right)\coloneqq\mathds{1}_{R}\left(x,y\right)-\E\left[ \mathds{1}_{R}\left(\bm \xi_1,y\right) \right].
		\end{align}
	Setting $S_n\coloneqq\sum_{k=1}^{n}\E\left[ \tilde {\mathds{1}}_{A \times B}(\bm{\xi}_1, \bm U)   \tilde {\mathds{1}}_{A' \times B'} (\bm{\xi}_{1+k}, \bm U') \right]$, we estimate
		\begin{multline}\label{eq:rehbgre0uq-9grg}
	\left|	S_{\infty} -  
	\sum_{k=1}^{2n-2}  	\frac{1}{2n} \sum_{i=1}^{2n-1-k}  \E \left[ \tilde{\mathds{1}}_{A \times B}\left(\bm \xi_1,s_i\right) \tilde{\mathds{1}}_{A' \times B'}\left(\bm \xi_{1+k},s_{i+k}\right)  \right] 	\right| \\
	\leq \left| S_{\infty}-S_{2n-2}\right| 
	+  \left|	S_{2n-2} - 
	\sum_{k=1}^{2n-2}  	\frac{1}{2n} \sum_{i=1}^{2n-2}  \E \left[ \tilde{\mathds{1}}_{A \times B}\left(\bm \xi_1,s_i\right) \tilde{\mathds{1}}_{A' \times B'}\left(\bm \xi_{1+k},s_{i+k}\right)  \right] 	\right| \\
	+  \left|	\sum_{k=1}^{2n-2}  	\frac{1}{2n} \sum_{i=2n-1-k}^{2n-2}  \E \left[ \tilde{\mathds{1}}_{A \times B}\left(\bm \xi_1,s_i\right) \tilde{\mathds{1}}_{A' \times B'}\left(\bm \xi_{1+k},s_{i+k}\right)  \right] \right|.
	\end{multline}
	Clearly, the first term in the right-hand side of the inequality above tends to zero, being the tail of a convergent series (the fact that $S_{\infty}<\infty$ follows via arguments already used for \cref{eq:bnd_with_alpha}). For the last term, we notice that, using \cref{lem:decay_correlations},
	\begin{equation}
		\left| \E \left[ \tilde{\mathds{1}}_{A \times B}\left(\bm \xi_1,s_i\right) \tilde{\mathds{1}}_{A' \times B'}\left(\bm \xi_{1+k},s_{i+k}\right)  \right] \right| \leq 4 \cdot \alpha_k,
	\end{equation}
	and thus
	\begin{equation}
	 \left|	\sum_{k=1}^{2n-2}  	\frac{1}{2n} \sum_{i=2n-1-k}^{2n-2}  \E \left[ \tilde{\mathds{1}}_{A \times B}\left(\bm \xi_1,s_i\right) \tilde{\mathds{1}}_{A' \times B'}\left(\bm \xi_{1+k},s_{i+k}\right)  \right] \right| \leq \frac{1}{2n}	\sum_{k=1}^{2n-2}  4 k \cdot \alpha_k,
	\end{equation}
	which converges to $0$ as $n$ goes to infinity by the assumption in \cref{eq:strongly_mix_plus} and the same arguments used in \cref{rem:fbkwufobw}.
	It remains to bound the second term. Expanding the products and recalling that $\bm V, \bm V', \bm U, \bm U'$ are independent random variables such that $\bm{\xi}_{1}\stackrel{d}{=}\bm{\xi}_{1+k}\stackrel{d}{=}\bm {V}\stackrel{d}{=}\bm {V}'$ and $\bm {U}\stackrel{d}{=}\bm {U}'\stackrel{d}{=}\mu$, we have that
	\begin{multline}
	S_{2n-2}=\sum_{k=1}^{2n-2} \E\left[ \tilde {\mathds{1}}_{A \times B}(\bm{\xi}_1, \bm U)   \tilde {\mathds{1}}_{A' \times B'} (\bm{\xi}_{1+k}, \bm U') \right]\\
	=
	\sum_{k=1}^{2n-2} \E\left[  \mathds{1}_{A \times B}(\bm{\xi}_1, \bm U)  \mathds{1}_{A' \times B'} (\bm{\xi}_{1+k}, \bm U') \right]-\E\left[  \mathds{1}_{A \times B}(\bm V, \bm U)  \mathds{1}_{A' \times B'} (\bm{V}', \bm U') \right]\\
	=
	\sum_{k=1}^{2n-2} \mu \left(B\right) \cdot \mu \left(B'\right) \cdot
	\left(\E\left[  \mathds{1}_{A \times A'}(\bm{\xi}_1,\bm{\xi}_{1+k} )\right]-\E\left[  \mathds{1}_{A \times A'}(\bm V,\bm{V}') \right]\right).
	\end{multline}
	Similarly, we obtain 
	\begin{multline}
	\E \left[ \tilde{\mathds{1}}_{A \times B}\left(\bm \xi_1,s_i\right)  
	\tilde{\mathds{1}}_{A' \times B'}\left(\bm \xi_{1+k},s_{i+k}\right) \right]\\
	=
	\E \left[ \mathds{1}_{A \times B}\left(\bm \xi_1,s_i\right) \mathds{1}_{A' \times B'}\left(\bm \xi_{1+k},s_{i+k}\right)\right]-\E \left[ \mathds{1}_{A \times B}\left( \bm V,s_i\right)\right]\E\left[ \mathds{1}_{A' \times B'}\left(\bm V',s_{i+k}\right)\right]\\
	= \mathds{1}_{B \times B'}(s_i,s_{i+k})\cdot \left(\E \left[ \mathds{1}_{A \times A'}\left(\bm \xi_1,\bm \xi_{1+k}\right)\right]-\E \left[ \mathds{1}_{A \times A'}\left( \bm V,\bm V'\right)\right]\right).
	\end{multline}
	Therefore the second term in the right-hand side of \cref{eq:rehbgre0uq-9grg} is bounded by 
	\begin{equation}\label{eq:erbgorobgegoe}
	\sum_{k=1}^{2n-2}   \left|   \E \left[ \mathds{1}_{A \times A'}\left(\bm \xi_1,\bm \xi_{1+k}\right)\right]-\E \left[ \mathds{1}_{A \times A'}\left( \bm V,\bm V'\right)\right]\right| \cdot \left|  \mu (B)\mu(B') 	-\frac{1}{2n} \sum_{i=1}^{2n-2} \mathds{1}_{B \times B'} (s_i, s_{i+k})  \right|.
	\end{equation}
	Using \cref{prop:uniform_bound} we have that
	$$\sup_{k \in [2n-2]} \left| \mu (B)\mu(B') 	-\frac{1}{2n} \sum_{i=1}^{2n-2} \mathds{1}_{B \times B'} (\bm s_i, \bm s_{i+k})  \right| \xrightarrow[n\to\infty]{a.s.} 0.$$
	In addition, using once again \cref{lem:decay_correlations} and the assumption in \cref{eq:strongly_mix_plus} we have that  $$\sum_{k=1}^{2n-2}   \left|   \E \left[ \mathds{1}_{A \times A'}\left(\bm \xi_1,\bm \xi_{1+k}\right)\right]-\E \left[ \mathds{1}_{A \times A'}\left( \bm V,\bm V'\right)\right]\right|<\infty.$$
	The last two equations imply that the bound in \cref{eq:erbgorobgegoe} tends to zero as $n$ tends to infinity for $\mu^{\N}$-almost every sequence $(s_i)_{{i\in\N}}\in\R^{\N}_{+}$, completing the proof of \cref{eq:dim_for_ind_fct_centered}.
	
	In order to conclude the proof of the lemma, it remains to generalize the result in \cref{eq:dim_for_ind_fct_centered} to all bounded and measurable functions. This can be done using the same techniques adopted to prove the law of large numbers, therefore we skip the details.
\end{proof}

We now complete the proof of \cref{prop:clt_sum_of_the_g}.

\begin{proof}[Proof of \cref{prop:clt_sum_of_the_g}]
	Recalling that $\tilde g (x,y) \coloneqq g(x,y) - \E \left[ g\left( \bm V, y\right) \right]$ and thanks to  \cref{lem:variance_is_linear}, it is enough to show that 
	\begin{equation}\label{eq:clt_h}
		\frac{1}{\rho_{g,n}}	\sum_{j=1}^{2n-1}\tilde{g}\left(\bm \xi_j,s_j\right)\xrightarrow{d} \bm{\mathcal{N}}(0, 1).
	\end{equation}
	The difficulty in establishing this convergence lies in the fact that we are dealing with a sum of random variables that are neither independent nor identically distributed. 
	We proceed in two steps. First, we apply the the Bernstein's method, thus we reduce the problem to the study of a sum of ``almost" independent random variables. More precisely, we use the decay of the correlations for the process $\bm \xi$ to decompose $\sum_{j=1}^{2n-1}\tilde{g}\left(\bm \xi_j,s_j\right)$ into two distinct sums, in such a way that one of them is a sum of ``almost" independent random variables and the other one is negligible (in a sense that will be specified in due time). After having dealt with the lack of independence, we settle the issue that the random variables are not identically distributed using the Lyapounov's condition.
	
	We start with the first step. Recall that we assume the existence of two sequences $p=p(n)$ and $q=q(n)$ such that:
	\begin{itemize}
		\item  	$p\xrightarrow{n\to\infty} +\infty$ and $q\xrightarrow{n\to\infty} +\infty$,
		\item 	$q=o(p)$ and $p=o(n)$ as $n \to \infty$,
		\item   $n p^{-1 } \alpha_q=o(1)$,
		\item   $  \frac{p}{n}  \cdot \sum_{j=1}^p j \alpha_j = o(1)$.
	\end{itemize}
	As said above, we represent the sum $\sum_{j=1}^{2n-1}\tilde{g}\left(\bm \xi_j,s_j\right)$ as a sum of nearly independent random variables (the \emph{big blocks} of size $p$, denoted $\bm \beta_i$ below) alternating with other terms (the \emph{small blocks} of size $q$, denoted $\bm \gamma_i$ below) whose sum is negligible.
	We define $k = \lfloor (2n-1)/(p+q) \rfloor$.
	We can thus write 
	\begin{equation}
		\sum_{j=1}^{2n-1}\tilde{g}\left(\bm \xi_j,s_j\right) = \sum_{i=0}^{k-1} \bm \beta_i +  \sum_{i=0}^k \bm \gamma_i, 
	\end{equation}
	where, for $0\leq i \leq k-1$,
	\begin{equation}
	 \bm \beta_i= \bm \beta_i (\tilde g, n) \coloneqq \sum_{j=ip+iq+1}^{(i+1)p+iq} \tilde g \left(\bm \xi_j,s_j\right), \quad\quad 
	 \bm \gamma_i= \bm \gamma_i (\tilde g, n) \coloneqq \sum_{j=(i+1)p+iq+1}^{(i+1)p+(i+1)q} \tilde g\left(\bm \xi_j,s_j\right),
	\end{equation}
	and 
	\begin{equation}
	\quad \quad \bm \gamma_k=\bm \gamma_k(\tilde g, n) \coloneqq \sum_{j=kp+kq+1}^{2n-1}  \tilde g\left(\bm \xi_j,s_j\right).
	\end{equation}
	Henceforth, we will omit the dependence on $\tilde g$ and on $n$ simply writing $\bm \beta_i$ and $\bm \gamma_i$, in order to simplify the notation (whenever it is clear). Setting $\bm H_n'\coloneqq \frac{1}{\rho_{g,n}} \sum_{i=0}^{k-1} \bm \beta_i$ and $\bm H_n''\coloneqq \frac{1}{\rho_{g,n}}   \sum_{i=0}^k \bm \gamma_i $, we can write 
	\begin{equation}
	\frac{1}{\rho_{g,n}}\sum_{j=1}^{2n-1}\tilde{g}\left(\bm \xi_j,s_j\right) = 	 \bm H_n' + \bm H_n''.
	\end{equation}
	 The proof of \cref{eq:clt_h} now consists of two steps. First, we show that $\bm H_n''\xrightarrow{P} 0$,  and secondly we show that the characteristic function of $\bm H_n'$ converges to the characteristic function of a standard Gaussian random variable. Then, we can conclude using standard arguments. 

	We start by proving that $\bm H_n''  \xrightarrow{P} 0$. By 
	Chebyshev's inequality and the fact that $\E[\bm H_n'']=0$, it is enough to show that $\E \left[\left( \bm H_n''\right)^2\right] \to 0$ as $n \to \infty$. 
	We can rewrite $\E \left[\left( \bm H_n''\right)^2\right]$ as
	\begin{multline}
	 \frac{1	}{\rho_{g,n}^2} \cdot \E \left[\left(  \sum_{i=0}^{k} \bm \gamma_i \right)^2\right]
		=  \frac{1	}{\rho_{g,n}^2} 
		\left(
		 \E \left[ \sum_{i=0}^{k-1} \bm \gamma_i^2  \right] 
		 +  \E \left[\bm \gamma_k^2 \right]
		+\E \left[  \sum_{ \substack{ i,j=0\\i \neq j} }^{k-1} \bm \gamma_i \bm \gamma_j \right] 
		+ 2 \E \left[ \sum_{i=0}^{k-1} \bm \gamma_i \bm \gamma_k \right] 
		\right).
	\end{multline}
	Note that, by definition of $\bm \gamma_i$ and using \cref{lem:decay_correlations} once again, for $i\neq j$, we have the bounds 
	\begin{equation}\label{eq:ebfgreubfoeqrf}
		\E \left[  \bm \gamma_i \bm \gamma_j \right] \leq q^2 \cdot \alpha_{p(i-j)},
	\end{equation}
	and 
	\begin{equation}
	\E \left[  \bm \gamma_i \bm \gamma_k \right] \leq q \cdot (p+q) \cdot \alpha_{p(k-i)}.
	\end{equation}
	Moreover, by the same argument used in \cref{lem:variance_is_linear}, we have that
		\begin{equation}
	\E \left[  \bm \gamma_i^2  \right] = \rho_g^2 \cdot q \cdot (1+o(1))=O(q)
	\end{equation}
	and 
		\begin{equation}
	\E \left[  \bm \gamma_k^2  \right] = O(p+q) = O(p).
	\end{equation}
	Hence, using \cref{lem:variance_is_linear},
	\begin{equation}
	\frac{1}{\rho_{g,n}^2} \E \left[ \sum_{i=0}^{k-1} \bm \gamma_i^2  \right] = O\left(\frac{kq}{n}\right) = O\left(\frac{q}{p}\right) = o(1)
	\end{equation}
	and 
	\begin{equation}
	 \frac{1	}{\rho_{g,n}^2} \E \left[\bm \gamma_k^2 \right] = o(1).
	\end{equation}
	Using \cref{eq:ebfgreubfoeqrf}, we also have that 
	\begin{multline}
		 \frac{1}{\rho_{g,n}^2} \E \left[  \sum_{i,j=1, i \neq j}^{k-1} \bm \gamma_i \bm \gamma_j \right] 
		 \leq  
		 \frac{2}{\rho_{g,n}^2}  \sum_{j=0}^{k-1} \sum_{i=j+1}^{k-1} q^2 \cdot \alpha_{p(i-j)} 
		 =  \frac{2}{\rho_{g,n}^2}  \sum_{j=0}^{k-1} \sum_{m=1}^{k-j} q^2 \cdot \alpha_{pm} \\
		  \leq \frac{2}{\rho_{g,n}^2}\cdot kq^2  \sum_{m=1}^{k}  \alpha_{pm} \leq \frac{2kq^2}{\rho_{g,n}^2\cdot p} \sum_{m=1}^{\infty}  \alpha_{m}= o(1),
	\end{multline}
	where in the last inequality we used that, since $\alpha_n$ is decreasing, 
	\begin{equation}
	 \sum_{m=1}^{k}  \alpha_{pm} \leq  \sum_{m=1}^{k} \frac{1}{p} \cdot \sum_{s=(m-1)p+1}^{mp} \alpha_s  \leq \frac{1}{p}  \cdot \sum_{s=1}^{\infty} \alpha_s.
	\end{equation}
	Analogously, we can prove that 
	\begin{equation}
	 \frac{2}{\rho_{g,n}^2} \E \left[ \sum_{i=0}^{k-1} \bm \gamma_i \bm \gamma_k \right] = o(1),
	\end{equation}
	concluding the proof that $\bm H_n''  \xrightarrow{P} 0$.
	
	Now we turn to the study of the limiting distribution of $\bm H_n'$. 
	We have that, for $t \in \mathbb{R}$,
	\begin{equation}
	\exp \left\{ it \bm H_n' \right\}  =	\exp \left\{ \frac{it}{\rho_{g,n}} \sum_{i=0}^{k-1} \bm \beta_i  \right\}.
	\end{equation}
	We now look at 
	$\exp \left\{ \frac{it}{\rho_{g,n}} \sum_{i=1}^{k-2} \bm \beta_i  \right\}$ 
	and 
	$ 	\exp \left\{ \frac{it}{\rho_{g,n}}  \bm \beta_{k-1} \right\}$.
	We have that the first random variable is measurable with respect to $\mathcal A_1^{(k-1)p + (k-2)q}$ and the second one is measurable with respect to $\mathcal A_{(k-1)p + (k-1)q+1}^{\infty}$. So, by \cref{lem:decay_correlations}, 
	\begin{equation}
		\left| \E \left[\exp \left\{ \frac{it}{\rho_{g,n}}\sum_{i=0}^{k-1} \bm \beta_i \right\} \right] - \E \left[	\exp \left\{ \frac{it}{\rho_{g,n}}\sum_{i=0}^{k-2} \bm \beta_i \right\}\right] \E \left[ \exp \left\{ \frac{it}{\rho_{g,n}}  \bm \beta_{k-1} \right\}\right]\right| \leq 4 \cdot \alpha_q.
	\end{equation}
	Iterating, we get
		\begin{equation}\label{eq:char_function_ofh_converges}
	\left| \E \left[\exp \left\{ \frac{it}{\rho_{g,n}}\sum_{i=0}^{k-1} \bm \beta_i \right\} \right] - 
	\prod_{i=0}^{k-1} \E \left[ \exp \left\{ \frac{it}{\rho_{g,n}}  \bm \beta_{i} \right\}\right]\right|  \leq 4 \cdot (k-1) \cdot \alpha_q, 
	\end{equation}
	the latter quantity tending to $0$ as $k \to \infty$ thanks to the assumptions on the sequences $p$ and $q$.		
	The last step of the proof consists in showing that, as $n \to \infty$,
	\begin{equation}\label{eq:ifbueiwufbweonfew}
	\prod_{i=0}^{k-1} \E \left[ \exp \left\{ \frac{it}{\rho_{g,n}}  \bm \beta_{i} \right\}\right] \to e^{ \frac{t^2}{2} }.
	\end{equation}
	Consider a collection of independent random variables $\bm X_{n,i}$, $n \in \mathbb{N}, i \in [k-1]_0$, such that $\bm X_{n,i}$ has the same distribution as $\frac{1}{\rho_{g,n}}\bm \beta_i (n)$.   
	By  \cite[Theorem 27.3]{MR1324786}, a sufficient condition to ensure that $\sum_{i=0}^{k-1}\bm X_{n,i}\to \bm{\mathcal N}(0,1)$ and so verifying \cref{eq:ifbueiwufbweonfew},
	is the well-known Lyapounov's condition:
	\begin{equation}\label{eq:fbbfoqehfoiewqhf}
		 \lim_{n \to \infty} \frac{1	}{\rho_{g,n}^{2+\delta}} \sum_{i=0}^{k-1} \E \left[ \bm Y_{n,i}^{2+\delta}    \right] = 0, \quad \text{for some} \quad \delta>0,
	\end{equation}
	where $\bm Y_{n,i} \coloneqq \bm X_{n,i} \cdot \rho_{g,n} \stackrel{d}{=} \sum_{j=ip+iq+1}^{(i+1)p+iq} \tilde{g} \left( \bm \xi_j, s_j \right)$.
	The condition is satisfied with $\delta=2$ thanks to the following lemma. 
	\begin{lem}\label{lem:CLT_bound_fourth_moment}
		Under the assumptions of  \cref{prop:clt_sum_of_the_g}, we have that for $\mu^{\N}$-almost every sequence $(s_i)_{{i\in\N}}\in\R^{\N}_{+}$, uniformly for all $i \in [k-1]_0$, 
		\begin{equation}
			\E \left[ \left( \bm Y_{n,i} \right)^4 \right] = O \left( p^2 \cdot \sum_{j=1}^{p} j\alpha_j \right).
		\end{equation}
	\end{lem}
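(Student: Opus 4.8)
The plan is to expand $\E[\bm Y_{n,i}^4]$ as a sum of mixed fourth moments over quadruples of indices lying in a block of $p$ consecutive integers, and to estimate each such moment through the decay-of-correlations bound in \cref{lem:decay_correlations}, exploiting that every summand is bounded and centered. Set $\bm Z_j\coloneqq\tilde g(\bm\xi_j,s_j)$ and $B_i\coloneqq\{ip+iq+1,\dots,(i+1)p+iq\}$, so that $\bm Y_{n,i}\stackrel{d}{=}\sum_{j\in B_i}\bm Z_j$ and $|B_i|=p$. Since $|g|\le\|g\|_\infty$, we have $|\bm Z_j|\le M\coloneqq 2\|g\|_\infty$ uniformly in $j$ and in $(s_i)_i$; and since $\bm\xi_j\stackrel{d}{=}\bm V$ by \eqref{eq:stationarity0}, the definition of $\tilde g$ gives $\E[\bm Z_j]=\E[\tilde g(\bm V,s_j)]=0$. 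Moreover $\bm Z_j$ is $\sigma(\bm\xi_j)$-measurable, hence simultaneously $\mathcal A_1^{j}$- and $\mathcal A_{j}^{\infty}$-measurable, so all the bounds below hold for \emph{every} fixed sequence $(s_i)_i$ and with constants not depending on $i$. By symmetry, $\E[\bm Y_{n,i}^4]\le 24\sum\bigl|\E[\bm Z_{j_1}\bm Z_{j_2}\bm Z_{j_3}\bm Z_{j_4}]\bigr|$, the sum running over $j_1\le j_2\le j_3\le j_4$ in $B_i$.

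Fix such an ordered quadruple and put $d_1=j_2-j_1$, $d_2=j_3-j_2$, $d_3=j_4-j_3$, with $r=\max(d_1,d_2,d_3)$. If $r=d_1$, then $\bm Z_{j_1}$ is $\mathcal A_1^{j_1}$-measurable and $\bm Z_{j_2}\bm Z_{j_3}\bm Z_{j_4}$ is $\mathcal A_{j_1+d_1}^{\infty}$-measurable, so \cref{lem:decay_correlations} together with $\E[\bm Z_{j_1}]=0$ yields $\bigl|\E[\bm Z_{j_1}\bm Z_{j_2}\bm Z_{j_3}\bm Z_{j_4}]\bigr|\le 4M^4\alpha_{d_1}$; the case $r=d_3$ is symmetric and gives $4M^4\alpha_{d_3}$. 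If $r=d_2$, split $\{\bm Z_{j_1},\bm Z_{j_2}\}$ (which is $\mathcal A_1^{j_2}$-measurable) from $\{\bm Z_{j_3},\bm Z_{j_4}\}$ (which is $\mathcal A_{j_2+d_2}^{\infty}$-measurable) via \cref{lem:decay_correlations} to obtain $\bigl|\E[\bm Z_{j_1}\bm Z_{j_2}\bm Z_{j_3}\bm Z_{j_4}]-\E[\bm Z_{j_1}\bm Z_{j_2}]\,\E[\bm Z_{j_3}\bm Z_{j_4}]\bigr|\le 4M^4\alpha_{d_2}$, and then a second application of \cref{lem:decay_correlations} to each centered pair (or the trivial bound $\E[\bm Z^2]\le M^2$ if a gap is $0$) gives $\bigl|\E[\bm Z_{j_1}\bm Z_{j_2}]\bigr|\,\bigl|\E[\bm Z_{j_3}\bm Z_{j_4}]\bigr|\le 16M^4\alpha_{d_1}\alpha_{d_3}$; hence in this case $\bigl|\E[\bm Z_{j_1}\bm Z_{j_2}\bm Z_{j_3}\bm Z_{j_4}]\bigr|\le 16M^4\alpha_{d_1}\alpha_{d_3}+4M^4\alpha_{d_2}$. (Quadruples with all four indices equal are bounded by $M^4$ and number at most $p$.)

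Now sum over $B_i$, treating the three cases separately so that in the case $r=d_k$ one uses simultaneously the estimate just obtained \emph{and} the constraint that the other two gaps are $\le d_k$. For a prescribed gap profile, the number of ordered quadruples in $B_i$ is at most $|B_i|=p$. In the case $r=d_1$ (and likewise $r=d_3$), the number of quadruples with $j_2-j_1=d_1\ge1$ and $d_2,d_3\le d_1$ is at most $p\,(d_1+1)^2$, so the contribution is $\le 4M^4\,p\sum_{d_1=1}^{p}(d_1+1)^2\alpha_{d_1}=O\bigl(p\sum_{j=1}^{p}j^2\alpha_j\bigr)=O\bigl(p^2\sum_{j=1}^{p}j\alpha_j\bigr)$, using $j^2\le p\,j$. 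In the case $r=d_2$, the term $4M^4\alpha_{d_2}$ contributes, again via $d_1,d_3\le d_2$, at most $O\bigl(p\sum_{j=1}^{p}j^2\alpha_j\bigr)=O\bigl(p^2\sum_{j=1}^{p}j\alpha_j\bigr)$, while the term $16M^4\alpha_{d_1}\alpha_{d_3}$ contributes at most $16M^4\,p\,(p+1)\bigl(\sum_{d\ge0}\alpha_d\bigr)^2=O(p^2)$, since $\sum_n\alpha_n<\infty$ by \eqref{eq:strongly_mix_plus} (the factor $p+1$ counting the choices of $d_2\le p$, a vanishing $d_i$ being handled by the $M^2$ bound, which only helps). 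Collecting the three cases together with the $O(p)$ from the degenerate quadruples gives $\E[\bm Y_{n,i}^4]=O\bigl(p^2\sum_{j=1}^{p}j\alpha_j\bigr)$, with an implicit constant depending only on $\|g\|_\infty$ and on $\sum_n\alpha_n$, hence uniform in $i\in[k-1]_0$ and valid for every sequence $(s_i)_i$ (a fortiori for $\mu^{\N}$-almost every one).

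The step I expect to be the main obstacle is precisely the combinatorial accounting that produces the factor $\sum_{j=1}^{p}j\alpha_j$: always splitting across the middle gap would only give the much cruder bound $O(p^3)$, which is insufficient downstream. The point is to route the decomposition through whichever of the three gaps is \emph{largest}, which forces the remaining two gaps to be $\le r$ and thereby turns the count of quadruples with maximal gap $r$ into $O(p\,r^2)$ instead of $O(p^3)$; summing $p\,r^2\alpha_r$ and bounding $r^2\le p\,r$ then delivers exactly the claimed rate. Everything else — the expansion, the centering via $\E[\bm Z_j]=0$, and the repeated application of \cref{lem:decay_correlations} — is routine.
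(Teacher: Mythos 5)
Your proof is correct and follows essentially the same route as the paper: expand the fourth moment of the block sum, bound each mixed moment via \cref{lem:decay_correlations} using boundedness and the centering $\E[\tilde g(\bm\xi_j,s_j)]=0$, and count gap configurations to extract the factor $p^2\sum_{j\le p} j\alpha_j$, with constants depending only on $\|g\|_\infty$ and $\sum_n\alpha_n$, hence uniform in $i$ and in $(s_i)_i$. The only difference is bookkeeping: you route each quadruple through its largest gap (so the other two gaps are at most $r$, giving the $p\,r^2$ count), whereas the paper splits by coincidence pattern and bounds the all-distinct term by $\min\{\alpha_{k-j},\alpha_{m-l}\}$, using the monotonicity of $(\alpha_n)$ in the summation — both yield the same estimate.
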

	Before proving the lemma above above, we explain how it implies the condition in \cref{eq:fbbfoqehfoiewqhf} with $\delta=2$. By \cref{lem:variance_is_linear}, we have that $\rho_{g,n}^2 = 2 n \cdot \rho^2_g \cdot (1+o(1))$, thus
	\begin{equation} 
		  \frac{1}{\rho_{g,n}^4 } \sum_{i=0}^{k-1}  \E \left[ \bm Y_{n,i}^4 \right] 
		 \leq 
		  C \cdot \frac{k\cdot p^2 \cdot \sum_{j=1}^{p} j\alpha_j}{4 n^2 \cdot \rho^4_g} \to 0,
	\end{equation}
	where for the limit we used the fact that $ \frac{k\cdot p}{n} \to 1$ and that, by assumption, $ \frac{ p \cdot \sum_{j=1}^{p} j\alpha_j}{n} \to 0$.
	
	\medskip
	
	We conclude the proof of \cref{prop:clt_sum_of_the_g} by proving \cref{lem:CLT_bound_fourth_moment}. Let $A_0 \coloneqq [p]$ and $A_i \coloneqq [(i+1)p+iq] \setminus [ip+iq]$ for $i \geq 1$. Note that $|A_i|=p$ for all $i\geq 0$. We have that
	\begin{align}
	&\E \left[ \left( \bm Y_{n,i} \right)^4 \right]	= \E \left[ \left( \sum_{j=ip+iq+1}^{(i+1)p+iq} \tilde g \left( \bm \xi_j, s_j \right)  \right)^4 \right] \\
	=&O\Bigg(\sum_{j \in A_i}  \E \left[ \tilde g^4 \left( \bm \xi_j, s_j \right) \right] 
	 +  \sum_{\substack {j,k \in A_i \\ j\neq k}} \E \left[ \tilde g ^2 \left( \bm \xi_j, s_j \right) \tilde g ^2 \left( \bm \xi_k, s_k\right) \right] 
	 +  \sum_{\substack{j,k \in A_i\\ j\neq k}} \E \left[ \tilde g^3 \left( \bm \xi_j, s_j \right) \tilde g \left( \bm \xi_k, s_k\right) \right] \\
	 +  &\sum_{\substack{j,k, l \in A_i \\ j\neq k\neq l}} \E \left[ \tilde g^2 \left( \bm \xi_j, s_j \right) \tilde g \left( \bm \xi_k, s_k\right) \tilde g \left( \bm \xi_l, s_l\right) \right] 
	 +  \sum_{\substack{j,k, l,m \in A_i \\ j\neq k\neq l \neq m}} \E \left[ \tilde g\left( \bm \xi_j, s_j \right) \tilde g \left( \bm \xi_k, s_k\right) \tilde g \left( \bm \xi_l, s_l\right) \tilde g \left( \bm \xi_m, s_m\right) \right]\Bigg).
	\end{align} 
	The fact that $\tilde g$ is bounded and the decay of the correlations will give us some bounds for each of these terms. First of all, since $\tilde g$ is bounded, we have that 
	$
	 \sum_{j \in A_i}  \E \left[ \tilde g^4 \left( \bm \xi_j, s_j \right) \right] = O(p),
	$
	$
		 \sum_{j,k \in A_i, j\neq k} \E \left[ \tilde g^2 \left( \bm \xi_j, s_j \right) \tilde g^2 \left( \bm \xi_k, s_k\right) \right] = O \left(p^2 \right),
	$
	and
	$
	 \sum_{j,k \in A_i, j\neq k} \E \left[ \tilde g^3 \left( \bm \xi_j, s_j \right) \tilde g \left( \bm \xi_k, s_k\right) \right] = O \left(p^2 \right).
	$
	We now look at the fourth addend. 
	We have that 
	\begin{multline}
		 \sum_{\substack{j,k, l \in A_i\\ j\neq k\neq l}} \E \left[ \tilde g^2 \left( \bm \xi_j, s_j \right) \tilde g\left( \bm \xi_k, s_k\right) \tilde g \left( \bm \xi_l, s_l\right) \right] \\
		 =
		 O \left(  
		 \sum_{\substack{j,k, l \in A_i\\ j< k< l}} \E \left[ \tilde g^2 \left( \bm \xi_j, s_j \right) \tilde g\left( \bm \xi_k, s_k\right) \tilde g \left( \bm \xi_l, s_l\right) \right]  
		  \right) 
		  =
		  O \left(
		  \sum_{l \in A_i}  \sum_{k = ip+iq+1 }^{l-1}   \sum_{j=ip+iq+1}^{k}  \alpha_{l-k} 
		  \right) = O(p^2),
	\end{multline}
	since  $\sum_{i=1}^{\infty} \alpha_{i} < + \infty$ by assumption and $|A_i|=p$.
	Finally, we estimate the last addend.
	We have that 
	\begin{multline}
		 \sum_{\substack{j,k, l,m \in A_i\\ j\neq k\neq l \neq m}} \E \left[ \tilde g \left( \bm \xi_j, s_j \right) \tilde g\left( \bm \xi_k, s_k\right) \tilde g \left( \bm \xi_l, s_l\right) \tilde g \left( \bm \xi_m, s_m\right) \right]  \\
		=
		O \left(
		 \sum_{\substack{j,k, l,m \in A_i\\ j < k < l <m}} \E \left[ \tilde g\left( \bm \xi_j, s_j \right) \tilde g \left( \bm \xi_k, s_k\right) \tilde g \left( \bm \xi_l, s_l\right) \tilde g\left( \bm \xi_m, s_m\right) \right]
		\right) 
		= 
		O \left(
		 \sum_{\substack{j,k, l,m \in A_i\\j < k < l <m}} \min \{\alpha_{k-j}, \alpha_{m-l}\}
		\right).
	\end{multline}
	We analyse the last expression. Since the sequence $(\alpha_n)_{n\in\N}$ is decreasing, we see that 
	\begin{multline}
		\sum_{\substack{j,k, l,m \in A_i\\j < k < l <m}} \min \{\alpha_{k-j}, \alpha_{m-l}\}
		= 
		\sum_{j \in A_i} \sum_{x =1}^{p-j} \sum_{\substack{l \in A_i \\ l>j+x}} \sum_{y =1}^{p-l} \min \{\alpha_{x}, \alpha_{y}\} \\
		=
		\sum_{j \in A_i} \sum_{x =1}^{p-j} \sum_{\substack{l \in A_i \\ l>j+x}} \sum_{y =1}^{x} \alpha_{x}
		+
		\sum_{j \in A_i} \sum_{x =1}^{p-j} \sum_{\substack{l \in A_i \\ l>j+x}} \sum_{y =x+1}^{p-l} \alpha_{y} .
	\end{multline}
	Since $|A_i|=p$ we have that 
	\begin{equation}
			\sum_{j \in A_i} \sum_{x =1}^{p-j} \sum_{\substack{l \in A_i \\ l>j+x}} \sum_{y =1}^{x} \alpha_{x}
			\leq p^2 \cdot \sum_{x=1}^p x\alpha_{x},
	\end{equation}
	and that
	\begin{equation}
	\sum_{j \in A_i} \sum_{x =1}^{p-j} \sum_{\substack{l \in A_i \\ l>j+x}} \sum_{y =x+1}^{p-l} \alpha_{y}
		\leq 
		p^2 \cdot \sum_{y=1}^p y \alpha_{y},
	\end{equation}
	from which we conclude that 
	\begin{equation}
		 \sum_{\substack{j,k, l,m \in A_i\\ j\neq k\neq l \neq m}} \E \left[ \tilde g \left( \bm \xi_j, s_j \right) \tilde g\left( \bm \xi_k, s_k\right) \tilde g \left( \bm \xi_l, s_l\right) \tilde g \left( \bm \xi_m, s_m\right) \right] 
		= 
		O \left( p^2 \cdot \sum_{j=1}^p j \alpha_j \right).
	\end{equation}
	This concludes the proof of \cref{lem:CLT_bound_fourth_moment}, and hence of \cref{prop:clt_sum_of_the_g} as well.
\end{proof}

\appendix

\section{A uniform estimate for central limit theorems of weakly-correlated random variables}

\begin{prop}\label{prop:uniform_bound}
	Let $(\bm X_i)_{i\in\N}$ be a sequence of i.i.d.\ real-valued random variables. Let also $f:\R^2\to\R$ be a bounded measurable function. Then
	$$\sup_{k \in [n]} \left|\frac{1}{n} \sum_{i=1}^{n} f (\bm X_i, \bm X_{i+k})-f(\bm X_1,\bm X_2)  \right| \xrightarrow[n\to\infty]{a.s.} 0.$$
\end{prop}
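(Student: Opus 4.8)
The plan is to center the summands by the correct \emph{constant} and then combine an exponential concentration estimate with a union bound over $k$ and the Borel--Cantelli lemma. First note that for $k\ge 1$ the pair $(\bm X_i,\bm X_{i+k})$ consists of two independent copies of $\bm X_1$, so $\E[f(\bm X_i,\bm X_{i+k})]=\E[f(\bm X_1,\bm X_2)]\eqqcolon c$ for all $i\in\N$ and all $k\ge 1$; thus the relevant centered function is $\bar f\coloneqq f-c$, still bounded and measurable, and it suffices to prove that $\sup_{k\in[n]}|\bm S_{n,k}|/n\to 0$ almost surely, where $\bm S_{n,k}\coloneqq\sum_{i=1}^n\bar f(\bm X_i,\bm X_{i+k})$. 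A bare second-moment bound gives only $\E[\bm S_{n,k}^2]=O(n)$ uniformly in $k$, hence $\P(|\bm S_{n,k}|>\varepsilon n)=O(1/n)$, so the union bound over the $n$ admissible values of $k$ is $O(1)$ --- not summable in $n$. The point is that $\bm S_{n,k}$ depends only very weakly on the i.i.d.\ sample, which gives a far sharper tail bound, \emph{uniform in $k$}.

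For the key estimate, fix $k\in[n]$ and view $\bm S_{n,k}$ as a function $h(\bm X_1,\dots,\bm X_{n+k})$ of the independent variables $\bm X_1,\dots,\bm X_{n+k}$ (recall $n+k\le 2n$). Changing one coordinate $\bm X_j$ affects only the two summands indexed by $i=j$ and $i=j-k$, each by at most $2\|\bar f\|_\infty$, so $h$ satisfies the bounded-differences condition with all constants at most $4\|\bar f\|_\infty$; since $\E[\bm S_{n,k}]=0$, McDiarmid's inequality gives, for every $\varepsilon>0$,
\[
\P\left(|\bm S_{n,k}|\ge \varepsilon n\right)\ \le\ 2\exp\!\left(-\frac{2\varepsilon^2 n^2}{(n+k)(4\|\bar f\|_\infty)^2}\right)\ \le\ 2\,e^{-c_\varepsilon n},
\]
with $c_\varepsilon>0$ depending only on $\varepsilon$ and $\|f\|_\infty$ and, crucially, not on $k$.

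Putting the pieces together, a union bound over $k\in[n]$ yields
\[
\P\!\left(\sup_{k\in[n]}|\bm S_{n,k}|>\varepsilon n\right)\ \le\ \sum_{k=1}^n\P\left(|\bm S_{n,k}|>\varepsilon n\right)\ \le\ 2n\,e^{-c_\varepsilon n},
\]
which is summable in $n$; by Borel--Cantelli, for each fixed $\varepsilon>0$ one has $\sup_{k\in[n]}|\bm S_{n,k}|/n\le\varepsilon$ for all large $n$ almost surely, and intersecting over $\varepsilon\in\{1/m:m\in\N\}$ gives $\sup_{k\in[n]}|\bm S_{n,k}|/n\to 0$ almost surely, i.e.\ the stated convergence with limiting constant $c=\E[f(\bm X_1,\bm X_2)]$. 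If one prefers to avoid McDiarmid's inequality and use only moment techniques like those already present in the paper, the same remark --- two summands $\bar f(\bm X_i,\bm X_{i+k})$, $\bar f(\bm X_{i'},\bm X_{i'+k})$ are independent unless $|i-i'|\in\{0,k\}$ --- shows, by expanding the product and discarding terms with an isolated mean-zero factor, that $\E[\bm S_{n,k}^{2p}]=O(n^p)$ uniformly in $k$ for every fixed $p\in\N$ (a surviving $2p$-fold product forces the indices to cluster into at most $p$ dependence blocks, each confined to an arithmetic progression of step $k$ and length $\le 2p$, hence contributing $O(n)$ index choices); taking $p\ge 3$ makes $\sum_{k=1}^n\P(|\bm S_{n,k}|>\varepsilon n)\le\sum_{k=1}^n\E[\bm S_{n,k}^{2p}](\varepsilon n)^{-2p}=O(n^{-(p-1)})$ summable, and Borel--Cantelli concludes as before. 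There is no serious obstacle here; the one thing to watch is precisely that a second moment is not enough, so one must exploit that the dependence of $\bm S_{n,k}$ on the sample --- and hence the relevant tail/moment bound --- is uniform in $k$.
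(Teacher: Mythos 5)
Your proof is correct, and it follows the same overall skeleton as the paper's: an exponential tail bound for $\bm S_{n,k}$ that is uniform in $k$, a union bound over the $n$ values of $k$, and Borel--Cantelli (your opening remark that a second-moment/Chebyshev bound is not summable is exactly why the paper also reaches for a concentration inequality). The difference is the source of the concentration estimate. The paper invokes Proposition 5 of the dependency-graph reference [MR4105789], whose hypothesis it verifies via the cumulant bound of Theorem 5 there, to get $\P(|\bm S_{n,k}-\E[\bm S_{n,k}]|\ge x)\le 2\exp\{-x^2/(18A^2n)\}$; you instead observe that $\bm S_{n,k}$ is a function of the $n+k\le 2n$ independent variables $\bm X_1,\dots,\bm X_{n+k}$ satisfying the bounded-differences condition with constants $4\|\bar f\|_\infty$ (changing $\bm X_j$ touches only the summands $i=j$ and $i=j-k$), so McDiarmid's inequality gives a tail of the same $e^{-c_\varepsilon n}$ form. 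Your route is more elementary and self-contained, at the price of a slightly different constant; the paper's route leans on machinery it already cites for other purposes. Your fallback $2p$-th moment argument (with $p\ge 3$) is also sound, though unnecessary once McDiarmid is available. One small point worth noting: the proposition as typeset subtracts $f(\bm X_1,\bm X_2)$, but both the paper's proof and its application (with $\mu(B)\mu(B')$) make clear the intended centering is $\E[f(\bm X_1,\bm X_2)]$, which is exactly the constant $c$ you use, so your reading is the correct one.
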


\begin{proof}
	Let $A>0$ be such that $f(x,y)\leq A$ for all $x,y\in\R^2$ and set $\bm S_{n,k}\coloneqq  \sum_{i=1}^{n} f (\bm X_i, \bm X_{i+k}).$ From\footnote{Note that the assumptions of \cite[Proposition 5]{MR4105789} are satisfied: indeed by \cite[Theorem 5]{MR4105789} we have that for all $r\geq 1$ the $r$-th cumulant of $S_{n,k}$, denoted by $K^{(r)}(S_{n,k})$ is bounded by,
	$|K^{(r)}(S_{n,k})| \leq n \cdot 4^{r-1}\cdot r^{r-2} \cdot A^r .$} \cite[Proposition 5]{MR4105789},  we have that for any $x>0$ and $k\in[n]$,
	\begin{equation}
	P \left(| S_{n,k} - \E [ S_{n,k} ]| \geq x \right) \leq 2 \exp\left\{-\frac{x^2}{18 A^2 \cdot n}\right\}.
	\end{equation}
	Using this bound together with a union bound, we get that for any $\varepsilon>0$,
	\begin{equation}
	\P\left(\sup_{k \in [n]} \left|\frac{1}{n} \sum_{i=1}^{n} f (\bm X_i, \bm X_{i+k})-f(\bm X_1,\bm X_2)  \right|\geq \varepsilon\right)\leq 2n  \exp\left\{-\frac{n\cdot \varepsilon^2}{18 A^2}\right\}.
	\end{equation}
	Then the statement of the proposition follows using a standard Borel-Cantelli argument.
\end{proof}

\section*{Acknowledgements}
The authors are very grateful to Itai Benjamini, Jean Bertoin, Mathilde Bouvel and Valentin F\'eray for many interesting suggestions and discussions. 
We also thank Emilio Corso for a careful reading of a preliminary draft of the paper.

\bibliography{mybib}

\begin{thebibliography}{BNKK06}

\bibitem[BBCL15]{MR3346459}
M.~Bena\"{\i}m, I.~Benjamini, J.~Chen, and Y.~Lima.
\newblock A generalized {P}\'{o}lya's urn with graph based interactions.
\newblock {\em Random Structures Algorithms}, 46(4):614--634, 2015.

\bibitem[Ber73]{MR350815}
K.~N. Berk.
\newblock A central limit theorem for {$m$}-dependent random variables with
  unbounded {$m$}.
\newblock {\em Ann. Probability}, 1:352--354, 1973.

\bibitem[Bil95]{MR1324786}
P.~Billingsley.
\newblock {\em Probability and measure}.
\newblock Wiley Series in Probability and Mathematical Statistics. John Wiley
  \& Sons, Inc., New York, third edition, 1995.
\newblock A Wiley-Interscience Publication.

\bibitem[BNH07]{ben2007efficiency}
E.~Ben-Naim and N.~Hengartner.
\newblock Efficiency of competitions.
\newblock {\em Physical Review E}, 76(2):026106, 2007.

\bibitem[BNKK06]{ben2006dynamics}
E.~Ben-Naim, B.~Kahng, and J.~S. Kim.
\newblock Dynamics of multi-player games.
\newblock {\em Journal of Statistical Mechanics: Theory and Experiment},
  2006(07):P07001, 2006.

\bibitem[Bor20]{MR4055194}
J.~Borga.
\newblock Local convergence for permutations and local limits for uniform
  {$\rho$}-avoiding permutations with {$|\rho|=3$}.
\newblock {\em Probab. Theory Related Fields}, 176(1-2):449--531, 2020.

\bibitem[BR89]{MR1048950}
P.~Baldi and Y.~Rinott.
\newblock On normal approximations of distributions in terms of dependency
  graphs.
\newblock {\em Ann. Probab.}, 17(4):1646--1650, 1989.

\bibitem[Bra07]{MR2325294}
R.~C. Bradley.
\newblock {\em Introduction to strong mixing conditions. {V}ol. 1}.
\newblock Kendrick Press, Heber City, UT, 2007.

\bibitem[Eks14]{MR3257385}
M.~Ekstr\"{o}m.
\newblock A general central limit theorem for strong mixing sequences.
\newblock {\em Statist. Probab. Lett.}, 94:236--238, 2014.

\bibitem[FMN20]{MR4105789}
V.~F\'{e}ray, P.-L. M\'{e}liot, and A.~Nikeghbali.
\newblock Graphons, permutons and the {T}homa simplex: three mod-{G}aussian
  moduli spaces.
\newblock {\em Proc. Lond. Math. Soc. (3)}, 121(4):876--926, 2020.

\bibitem[HR48]{MR26771}
W.~Hoeffding and H.~Robbins.
\newblock The central limit theorem for dependent random variables.
\newblock {\em Duke Math. J.}, 15:773--780, 1948.

\bibitem[Ibr62]{MR0148125}
I.~A. Ibragimov.
\newblock Some limit theorems for stationary processes.
\newblock {\em Teor. Verojatnost. i Primenen.}, 7:361--392, 1962.

\bibitem[IL71]{MR0322926}
I.~A. Ibragimov and Y.~V. Linnik.
\newblock {\em Independent and stationary sequences of random variables}.
\newblock Wolters-Noordhoff Publishing, Groningen, 1971.
\newblock With a supplementary chapter by I. A. Ibragimov and V. V. Petrov,
  Translation from the Russian edited by J. F. C. Kingman.

\bibitem[Jan88]{MR920273}
S.~Janson.
\newblock Normal convergence by higher semi-invariants with applications to
  sums of dependent random variables and random graphs.
\newblock {\em Ann. Probab.}, 16(1):305--312, 1988.

\bibitem[Jan04]{MR2068873}
S.~Janson.
\newblock Large deviations for sums of partly dependent random variables.
\newblock {\em Random Structures Algorithms}, 24(3):234--248, 2004.

\bibitem[Pel92]{MR1176496}
M.~Peligrad.
\newblock On the central limit theorem for weakly dependent sequences with a
  decomposed strong mixing coefficient.
\newblock {\em Stochastic Process. Appl.}, 42(2):181--193, 1992.

\bibitem[PL82]{MR681466}
M.~B. Petrovskaya and A.~M. Leontovich.
\newblock The central limit theorem for a sequence of random variables with a
  slowly growing number of dependences.
\newblock {\em Teor. Veroyatnost. i Primenen.}, 27(4):757--766, 1982.

\bibitem[PRW97]{MR1492353}
D.~N. Politis, J.~P. Romano, and M.~Wolf.
\newblock Subsampling for heteroskedastic time series.
\newblock {\em J. Econometrics}, 81(2):281--317, 1997.

\bibitem[Ros56]{MR74711}
M.~Rosenblatt.
\newblock A central limit theorem and a strong mixing condition.
\newblock {\em Proc. Nat. Acad. Sci. U.S.A.}, 42:43--47, 1956.

\bibitem[RW00]{MR1747098}
J.~P. Romano and M.~Wolf.
\newblock A more general central limit theorem for {$m$}-dependent random
  variables with unbounded {$m$}.
\newblock {\em Statist. Probab. Lett.}, 47(2):115--124, 2000.

\end{thebibliography}
\bibliographystyle{alpha}

\end{document}